\documentclass[review,onefignum,onetabnum]{siamart190516}



\usepackage{lipsum}
\usepackage{amsfonts}
\usepackage{graphicx}
\usepackage{epstopdf}
\usepackage{algorithmic}
\usepackage{amsmath}
\usepackage{amssymb}
\usepackage{bm}
\usepackage{booktabs}
\usepackage{multirow}
\ifpdf
  \DeclareGraphicsExtensions{.eps,.pdf,.png,.jpg}
\else
  \DeclareGraphicsExtensions{.eps}
\fi


\newsiamremark{remark}{Remark}
\newsiamremark{hypothesis}{Hypothesis}
\crefname{hypothesis}{Hypothesis}{Hypotheses}
\newsiamthm{claim}{Claim}
\newsiamthm{example}{Example}
\newsiamthm{fact}{Fact}

\headers{Convergence Rate Analysis for FP Iterations of GAN Operators}{Y. Lin and Y. Xu}

\title{Convergence Rate Analysis for Fixed-Point Iterations of Generalized Averaged Nonexpansive Operators}

\author{Yizun Lin\thanks{Department of Mathematics, College of Information Science and Technology, Jinan University, Guangzhou 510632, China
  (\email{linyizun@jnu.edu.cn}). Supported in part by Fundamental Research Funds for the Central Universities of China under Grant 11620352, by the Opening Project of Guangdong Province Key Laboratory of Computational Science at the Sun Yat-sen University under Grant 2021006, and by Natural Science Foundation of China under Grant 11771464.}
\and Yuesheng Xu\thanks{Department of Mathematics and Statistics, Old Dominion University, Norfolk, VA 23529, USA
  (\email{y1xu@odu.edu}). This author is a Professor Emeritus of Mathematics at Syracuse University, Syracuse, New York. Supported in part by US National Science Foundation under grant DMS-1912958 and by Natural Science Foundation of China under grant 11771464. All correspondence should be addressed to this author.}
}

\usepackage{amsopn}

\DeclareMathOperator*{\argmin}{argmin}

\def \bR {\mathbb R}
\def \bN {\mathbb N}

\def \mI {\mathcal{I}}
\def \mN {\mathcal{N}}

\def \bRm {\bR^{m}}
\def \bRn {\bR^{n}}

\def \bRnn {\bR^{n\times n}}

\def \bRmn {\bR^{m\times n}}

\def \prox {\mathrm{prox}}

\def \Fix {\text{Fix}}
\def \FixT {\text{Fix}(T)}

\def \hx {\hat{x}}

\def \ymuSNset {\Omega_{\mu}^{\gamma}}


\ifpdf
\hypersetup{
  pdftitle={An Example Article},
  pdfauthor={D. Doe, P. T. Frank, and J. E. Smith}
}
\fi



\begin{document}

\maketitle

\begin{abstract}
We estimate convergence rates for fixed-point iterations of a class of nonlinear operators which are partially motivated from solving convex optimization problems. We introduce the notion of the generalized averaged nonexpansive (GAN) operator with a positive exponent, and provide a convergence rate analysis of the fixed-point iteration of the GAN operator. The proposed generalized averaged nonexpansiveness is weaker than the averaged nonexpansiveness while stronger than nonexpansiveness. We show that the fixed-point iteration of a GAN operator with a positive exponent converges to its fixed-point and estimate the local convergence rate (the convergence rate in terms of the distance between consecutive iterates) according to the range of the exponent. We prove that the fixed-point iteration of a GAN operator with a positive exponent strictly smaller than 1 can achieve an exponential global convergence rate (the convergence rate in terms of the distance between an iterate and the solution). Furthermore, we establish the global convergence rate of the fixed-point iteration of a GAN operator, depending on both the exponent of generalized averaged nonexpansiveness and the exponent of the H$\ddot{\text{o}}$lder regularity, if the GAN operator is also H$\ddot{\text{o}}$lder regular. We then apply the established theory to three types of convex optimization problems that appear often in data science to design fixed-point iterative algorithms for solving these optimization problems and to analyze their convergence properties.
\end{abstract}

\begin{keywords}
  convex optimization, fixed-point iteration, generalized averaged nonexpansive, convergence rate
\end{keywords}

\begin{AMS} 47J26, 65K05, 90C25
\end{AMS}

\section{Introduction}
We consider in this paper the convergence rate analysis of fixed-point algorithms. Fixed-point type algorithms have been popular in solving nondifferentiable convex or nonconvex optimization problems such as image processing \cite{chen2013primal,li2015multi,lu2016multiplicative,micchelli2011proximity,micchelli2013proximity,shen2016wavelet}, medical imaging \cite{krol2012preconditioned,lin2019krasnoselskii,ross2017relaxed,zheng2019sparsity}, machine learning \cite{chen2015convergence,li2018fixed,li2019two,polson2015proximal}, and compressed sensing \cite{figueiredo2007gradient,zhu2015fast}. Existing fixed-point type algorithms for optimization including the gradient descent algorithm \cite{bertsekas2015convex,ruder2016overview}, the proximal point algorithm \cite{rockafellar1976monotone}, the proximal gradient algorithm \cite{bertsekas1976goldstein,parikh2014proximal}, the forward-backward splitting algorithm \cite{chen1997convergence,tseng2000modified} and the fixed-point proximity algorithm \cite{li2015multi,lin2019krasnoselskii,micchelli2011proximity,micchelli2013proximity}.

Traditionally, fixed-point algorithms were often developed by constructing contractive operators (contraction mapping) or averaged nonexpansive operators  \cite{agarwal2001fixed, bauschke2017convex,micchelli2011proximity}. Such constructions bring advantages for fixed-point algorithms. It makes the convergence analysis more straightforward and provides robust and monotonic convergence. That is, as the fixed-point iteration proceeds, the distance between the iterate  and the true solution is monotonically decreasing. In addition, fixed-point algorithms are comparatively simple and easy to implement. Most optimization problems in real-world applications may be reformulated as fixed-point equations of averaged nonexpansive operators but  usually not contractive operators. It is also known \cite{baillion1996rate} that the local convergence rate (the convergence rate in terms of the distance between consecutive iterates) of the fixed-point iteration of an averaged nonexpansive operator is $o(k^{-\frac{1}{2}})$, where $k$ denotes the step of the iteration. However, for certain problems, the operators that result in the fixed-point reformulation are not averaged nonexpansive. For such fixed-point iterations, the existing theory of the averaged nonexpansive operator is not applicable. Therefore, there is a need to extend the existing results.
We are interested in understanding the following two issues:  Is there a class of operators, satisfying a condition weaker than the averaged nonexpansiveness, whose  fixed-point iterations still converge? Is there a subclass of the averaged nonexpansive operators whose fixed-point iterations have convergence rates higher than order $o(k^{-\frac{1}{2}})$?
For the first issue, some classes of operators were proposed, such as demicontracitve operators \cite{hicks1977mann,muarucster2011strong} and quasi-firmly type nonexpansive operators \cite{song2009halpern,song2011successive}. However, these classes of operators do not ensure the closeness of the composition operation, which makes them not applicable to a large range of real-world optimization problems. In addition, their fixed-point iterations do not have a  convergence rate higher than that the averaged nonexpansive operators have.

To address these two issues, we introduce the notion of the generalized averaged nonexpansive (GAN) operator with a positive exponent $\gamma$, establish the convergence property of the fixed-point iterations of a GAN operator and prove their convergence rate higher than the known result for a range of the exponent $\gamma$. Specifically, this notion generalizes the averaged nonexpansive operators in two aspects. First, the generalized averaged nonexpansiveness with exponent $\gamma$ of an operator for $\gamma>2$ is weaker than the averaged nonexpansiveness which corresponds to $\gamma=2$, but it still guarantees convergence of its fixed-point iterations. Second, the exponent $\gamma$ allows us to refine the local convergence rates of the resulting fixed-point iterations, leading to a local convergence rate higher than that the averaged nonexpansive operator has.

We organize this paper in seven sections. In section 2, we describe fixed-point formulations for three convex optimization models. We introduce in section 3 the notion of GAN operator and study its connection with nonexpansive, averaged nonexpansive and contractive operators. Several basic properties of GAN operators are also provided.  Sections 4 and 5 are respectively devoted to local and global convergence rate analysis of fixed-point iterations of  GAN operators. In section 6, we employ the convergence rate results developed in Sections 4 and 5 to analyze the convergence rate of the fixed-point algorithms for three convex optimization models described in Section 2. Section 7 offers a conclusion.

\section{Fixed-point formulations for optimization}\label{sectFPopt}

Solutions of optimization problems are often formulated as fixed-points of nonlinear operators. Such formulations have great advantages for algorithm development and convergence analysis. 
We describe in this section fixed-point formulations for convex optimization problems.

By $\Gamma_0(\bRn)$ we denote the class of all proper lower semicontinuous convex functions from $\bRn$ to $\bR\cup\{\infty\}$. We assume that  $\Psi\in\Gamma_0(\bRn)$ and consider the convex optimization problem 
\begin{equation}\label{minmodel}
\argmin_{x\in\bRn}\Psi(x).
\end{equation}
Throughout this paper, we assume that the objective function $\Psi\in\Gamma_0(\bRn)$ has at least one minimizer without further mentioning. 
Solutions of problem \eqref{minmodel} may be reformulated as fixed-points of certain operators, depending on the smoothness of the objective function $\Psi$. To this end, we first recall the notions of the proximity operator and subdifferential of a convex function. Let $H\in\bR^{n\times n}$ be a symmetric positive definite matrix. For $x\in\bRn$ and $y\in\bRn$, we define the $H$-weighted inner product by $\langle x,y\rangle_H:=x^\top Hy$ and the corresponding $H$-weighted norm by $\|x\|_H:=\langle x,x\rangle_H^\frac{1}{2}$. Then the $\ell_2$ inner product and $\ell_2$ norm are given by $\langle x,y\rangle_2:=\langle x,y\rangle_I$ and $\|x\|_2:=\|x\|_I$ respectively, where $I\in\bR^{n\times n}$ denotes the identity matrix. Let $\psi\in\Gamma_0(\bRn)$. The proximity operator of $\psi$ at $x\in\bRn$ is defined by
$$
\prox_\psi(x):=\argmin_{u\in\bRn}\left\{\frac{1}{2}\|u-x\|_2^2+\psi(u)\right\}.
$$
The subdifferential of $\psi$ at $x\in\bRn$ is defined by
$$
\partial \psi(x):=\{y\in\bRn:\psi(z)\geq\psi(x)+\langle y,z-x\rangle_2\ \mbox{for all}\ z\in\bRn\}.
$$

We list below examples of the operators derived from problem \eqref{minmodel} for different types of objective functions. In the following three cases, we assume that function $f\in\Gamma_0(\bRn)$ is differentiable with an $L$-Lipschitz continuous gradient with respect to $\|\cdot\|_2$. We let $\bR_+$ denote the set of all positive real numbers throughout the paper.

\vspace{0.5em}
Case 1. $\Psi:=f$. In this case, a minimizer of \eqref{minmodel} is identified as a fixed-point of operator
\begin{equation}\label{defT1}
T_1:=\mI-\beta\nabla f, \ \ \mbox{where}\ \ \beta\in\bR_+. 
\end{equation}
We will call $T_1$ a gradient descent operator. This type of optimization problems has important applications in machine learning (e.g. smoothed SVM, ridge regression) \cite{zhang2004solving} and medical imaging \cite{ahn2003globally,fessler1994penalized}.

\vspace{0.5em}
Case 2. $\Psi:=f+g$, where $g\in\Gamma_0(\bRn)$ may not be differentiable, but has a closed form of its proximity operator. By using Fermat's rule (Theorem 16.3 of \cite{bauschke2017convex}) and a relation between the subdifferential and the proximity operator (Proposition 2.6 of \cite{ micchelli2011proximity}), a minimizer of \eqref{minmodel} is identified as a fixed-point of operator
\begin{equation}\label{defT2}
T_2:=\prox_{\beta g}\circ(\mI-\beta\nabla f), \ \ \mbox{where}\ \ \beta\in\bR_+.
\end{equation}
Obviously, $T_2=\prox_{\beta g}\circ T_1$. This type of optimization models is raised from machine learning (e.g. $\ell_1$-SVM, LASSO regression) \cite{li2019two}, compressed sensing \cite{figueiredo2007gradient} and image processing \cite{beck2009fast,figueiredo2003algorithm}.

\vspace{0.5em}
Case 3: $\Psi=f+g\circ B+h$, where $g\in\Gamma_0(\bRm)$ and $h\in\Gamma_0(\bRn)$ have closed forms of their proximity operators and $B\in\bRmn$ is a matrix. Let $g^*$ denote the conjugate function of $g$, that is, 
$$
g^*(z):=\sup_{y\in\bRm}\{\langle z,y\rangle_2-g(y)\},\ \ \mbox{for}\ \ z\in\bRm.
$$
By using Fermat's rule, the chain rule of subdifferential, a relation between the subdifferential and the proximity operator, and introducing a dual variable, a minimizer of \eqref{minmodel} in this case can be identified as a fixed-point of a nonlinear operator. Specifically, we let
 $v:=\left(\begin{array}{c}
x\\
y
\end{array}\right)$, for $x\in\bRn$, $y\in\bRm$, and introduce $r:\bR^{n+m}\to\bR$ by $r(v):=f(x)$, $\widetilde{T}:\bR^{n+m}\to\bR^{n+m}$
by $\widetilde{T}(v):=\left(\begin{array}{c}
\prox_{\beta h}(x)\\
\prox_{\eta g^*}(y)
\end{array}\right)$, where $\beta$ and $\eta$ are two positive parameters. Let 
$$
E:=\left(\begin{array}{cc}
I_n&-\beta B^\top\\
\eta B&I_m
\end{array}\right), \ \ G:=\left(\begin{array}{cc}
I_n&-\beta B^\top\\
-\eta B&I_m
\end{array}\right),\ \
W:=\left(\begin{array}{cc}
\frac{1}{\beta}I_n&-B^\top\\
-B&\frac{1}{\eta}I_m
\end{array}\right).
$$
We then define the operators
$$
T_G:u\to\left\{v:(u,v)\ \text{satisfies that}\ v=\widetilde{T}\left((E-G)v+Gu\right)\right\}
$$
and
\begin{equation}\label{defT3}
T_3:=T_{G}\circ(\mI-W^{-1}\nabla r),\ \ \mbox{where}\ \ \beta,\eta\in\bR_+.
\end{equation}
It can be verified that if $v\in \bR^{n+m}$ is a fixed-point of $T_3$, then the corresponding $x\in \bRn$ is a minimizer of  \eqref{minmodel}. One can refer to \cite{li2016fast,lin2019krasnoselskii} for more details of the derivation of operator $T_3$.
The model in this case has applications in image processing \cite{cai2008framelet,chan2003wavelet,rudin1992nonlinear}, machine learning \cite{sra2012optimization} and medical imaging \cite{komodakis2015playing,krol2012preconditioned, li2016fast,lin2019krasnoselskii}.

Analysis for convergence and convergence rate of fixed-point algorithms can be done by analyzing properties of the operators that define the fixed-point iterations. It is known \cite{baillion1996rate,browder1967construction} that a fixed-point iteration of an averaged nonexpansive operator converges to its fixed-point with a local convergence rate $o\left(k^{-\frac{1}{2}}\right)$. There are operators from application which may not be averaged nonexpansive.
Aiming at relaxing the averaged nonexpansiveness condition for analysis of convergence and convergence rates of fixed-point iterations of operators, we introduce the notion of the generalized averaged nonexpansive operator and show that the fixed-point iterations of such an operator are convergent and have certain convergence rates.

\section{Generalized averaged nonexpansive operators}\label{sec_SNoperator}

In this section, we introduce the notion of the generalized averaged nonexpansive (GAN) operator and study its connection with the  nonexpansive, averaged nonexpansive and contractive operators. Several basic properties of GAN operators are also provided.

We first describe the definition of GAN operator. Let $\mI$ denote the identity operator. 

\begin{definition}\label{def_SN} Let $\|\cdot\|$ denote a norm on $\bRn$.
An operator $T:\bRn\to\bRn$ is said to be generalized averaged nonexpansive if there exist $\gamma, \mu\in\bR_+$ such that
\begin{equation}\label{neq_SNdef}
\|Tx-Ty\|^\gamma+\mu\|(\mI-T)x-(\mI-T)y\|^\gamma\leq\|x-y\|^\gamma,\ \ \mbox{for all}\ \ x,y\in\bRn.
\end{equation}
Specifically, we say that $T$ is $\mu$-generalized averaged nonexpansive ($\mu$-GAN) with exponent $\gamma$ with respect to $\|\cdot\|$.
\end{definition}

The norm $\|\cdot\|$ mentioned in Definition \ref{neq_SNdef} can be any norm including the norm induced by an inner product, weighted inner product and the $\ell_1$ norm.
According to Definition \ref{def_SN}, for $\mu_1>\mu_2>0$, if $T$ is $\mu_1$-GAN with exponent $\gamma\in\bR_+$, then it is also $\mu_2$-GAN with exponent $\gamma$.

Let $\FixT$ denote the set of all fixed-points of operator $T$ and
$$
\Lambda:=\{T:\bRn\to\bRn|\ \FixT\neq\varnothing\}.
$$
Throughout this paper, we will assume that $T\in\Lambda$ without further mentioning. It follows from Definition \ref{def_SN} that if $T$ is GAN, then
\begin{equation}\label{neq2_SNdef}
\|Tx-\hx\|^\gamma+\mu\|Tx-x\|^\gamma\leq\|x-\hx\|^\gamma,\ \ \mbox{for all}\ \ x\in\bRn,\ \hx\in\FixT.
\end{equation}

We next discuss connections of the GAN operators with the nonexpansive, averaged nonexpansive, firmly nonexpansive and contractive operators. For notational simplicity, throughout the remaining part of this paper, we use $\langle\cdot, \cdot\rangle$ and $\|\cdot\|$ to represent a weighted inner product and the corresponding weighted norm with respect to a symmetric positive definite matrix, respectively, unless there is a need to specify the weight matrix. An operator $T:\bRn\to\bRn$ is called nonexpansive if
$$
\|Tx-Ty\|\leq\|x-y\|,\ \ \mbox{for all}\ \  x,y\in\bRn,
$$
and is called firmly nonexpansive if
$$
\|Tx-Ty\|^2\leq\langle Tx-Ty,x-y\rangle,\ \ \mbox{for all}\ \ x,y\in\bRn.
$$
If there exists a nonexpansive operator $\mN:\bRn\to\bRn$ and $\alpha\in(0,1)$ such that $T=(1-\alpha)\mI+\alpha\mN$, we say that $T$ is $\alpha$-averaged nonexpansive. If there exists $\rho\in(0,1)$ such that
\begin{equation}\label{contractineq}
\|Tx-Ty\|\leq\rho\|x-y\|, \ \ \mbox{for all}\ \ x,y\in\bRn,
\end{equation}
we say that $T$ is contractive ($\rho$-contractive). From Definition \ref{def_SN}, we can immediately see that GAN operators are nonexpansive.

To see the connection of the generalized averaged nonexpansiveness with the averaged nonexpansiveness, we recall a known result (Proposition 4.35 of \cite{bauschke2017convex}).

\begin{proposition}\label{prop_averaged}
Let $\alpha\in(0,1)$. Operator $T:\bRn\to\bRn$ is $\alpha$-averaged nonexpansive if and only if
\begin{equation}\label{neq_ANandSN}
\|Tx-Ty\|^2+\frac{1-\alpha}{\alpha}\|(\mI-T)x-(\mI-T)y\|^2\leq\|x-y\|^2, \ \ \mbox{for all}\ \ x,y\in\bRn.
\end{equation}
\end{proposition}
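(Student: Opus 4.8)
The plan is to reduce the stated equivalence to a single algebraic identity linking the three quantities $\|Tx-Ty\|$, $\|(\mI-T)x-(\mI-T)y\|$ and $\|\mN x-\mN y\|$. By definition, $T$ is $\alpha$-averaged nonexpansive exactly when there exists a nonexpansive $\mN:\bRn\to\bRn$ with $T=(1-\alpha)\mI+\alpha\mN$; for a fixed $\alpha\in(0,1)$ this relation determines $\mN$ uniquely as $\mN=\mI+\tfrac1\alpha(T-\mI)$. So I would first record that, once $\mN$ is fixed by this formula, the proposition is equivalent to the assertion that $\mN$ is nonexpansive if and only if inequality \eqref{neq_ANandSN} holds. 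This turns an ``$\iff$'' about an existential property into an ``$\iff$'' about a concrete inequality for one specific operator.

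The key tool is the convex-combination (polarization) identity valid in any inner product space: for $\lambda\in[0,1]$ and vectors $p,q$,
\begin{equation*}
\|\lambda p+(1-\lambda)q\|^2=\lambda\|p\|^2+(1-\lambda)\|q\|^2-\lambda(1-\lambda)\|p-q\|^2,
\end{equation*}
which here is applied in the weighted inner product and follows by direct expansion. Setting $p:=\mN x-\mN y$, $q:=x-y$ and $\lambda:=\alpha$, and using $Tx-Ty=(1-\alpha)(x-y)+\alpha(\mN x-\mN y)$, the left side becomes $\|Tx-Ty\|^2$. The crucial bookkeeping step is to observe that $(\mI-T)x-(\mI-T)y=(x-y)-(Tx-Ty)=-\alpha\bigl((\mN x-\mN y)-(x-y)\bigr)=-\alpha(p-q)$, so that $\|p-q\|^2=\tfrac1{\alpha^2}\|(\mI-T)x-(\mI-T)y\|^2$. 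Substituting this into the identity and rearranging yields the master equality
\begin{equation*}
\|Tx-Ty\|^2+\frac{1-\alpha}{\alpha}\|(\mI-T)x-(\mI-T)y\|^2=\alpha\|\mN x-\mN y\|^2+(1-\alpha)\|x-y\|^2,
\end{equation*}
holding for all $x,y\in\bRn$ with no assumptions on $\mN$.

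From this identity both implications are immediate. If $\mN$ is nonexpansive, then $\|\mN x-\mN y\|^2\le\|x-y\|^2$, so the right-hand side is at most $\alpha\|x-y\|^2+(1-\alpha)\|x-y\|^2=\|x-y\|^2$, giving \eqref{neq_ANandSN}. Conversely, if \eqref{neq_ANandSN} holds, the left-hand side is at most $\|x-y\|^2$, hence $\alpha\|\mN x-\mN y\|^2\le\alpha\|x-y\|^2$, and dividing by $\alpha>0$ shows $\mN$ is nonexpansive, i.e. $T$ is $\alpha$-averaged. The proof is essentially mechanical once the identity is in place; the only point demanding care is the main obstacle of correctly identifying the convex combination and verifying the sign/scaling relation $(\mI-T)x-(\mI-T)y=-\alpha(p-q)$, since it is precisely this factor of $\alpha^2$ that converts the cross term $\alpha(1-\alpha)\|p-q\|^2$ into the advertised coefficient $\tfrac{1-\alpha}{\alpha}$.
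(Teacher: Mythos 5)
Your proof is correct. The paper does not actually prove this proposition---it recalls it verbatim as Proposition 4.35 of \cite{bauschke2017convex}---and your argument via the convex-combination identity $\|\alpha p+(1-\alpha)q\|^2=\alpha\|p\|^2+(1-\alpha)\|q\|^2-\alpha(1-\alpha)\|p-q\|^2$, together with the reduction of the existential statement to the nonexpansiveness of the uniquely determined $\mN=\mI+\tfrac{1}{\alpha}(T-\mI)$, is exactly the standard proof of that cited result; the scaling step $(\mI-T)x-(\mI-T)y=-\alpha(p-q)$ and the resulting coefficient $\tfrac{1-\alpha}{\alpha}$ both check out.
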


Proposition \ref{prop_averaged} implies that the $\alpha$-averaged nonexpansiveness is equivalent to the $\frac{1-\alpha}{\alpha}$-generalized averaged nonexpansiveness with exponent 2. In particular, the firm nonexpansiveness is equivalent to the 1-generalized averaged nonexpansiveness with exponent 2, since it is also equivalent to the $\frac{1}{2}$-averaged nonexpansiveness (see Remark 4.34 of \cite{bauschke2017convex}). We will show later in this section that for any given $\gamma\in\bR_+$, a contractive operator must be GAN with exponent $\gamma$. The generalization from averaged nonexpansiveness to generalized averaged nonexpansiveness will lead to higher order convergence rate for fixed-point algorithm defined by a GAN operator with an exponent smaller than 2. We will discuss this point in a later section.

We now study the relation among the GAN operators with different exponents and the relation among the generalized averaged nonexpansiveness, contractivity and FP-contractivity (which we will define later). To this end, we first establish a technical lemma.

\begin{lemma}\label{gammapowerineq}
Let $a$, $b$ and $c$ be three nonnegative real numbers, $\gamma\in\bR_+$. Then the following statements hold:
\begin{itemize}
\item[$(i)$] If $\gamma>1$, then $(a+b)^\gamma\geq a^\gamma+b^\gamma$.
\item[$(ii)$] If $\gamma'>\gamma$ and $a^{\gamma}+b^{\gamma}\leq c^{\gamma}$, then $a^{\gamma'}+b^{\gamma'}\leq c^{\gamma'}$.
\end{itemize}
\end{lemma}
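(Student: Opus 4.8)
The plan is to prove $(i)$ directly and then derive $(ii)$ from $(i)$ by a power substitution, so that $(i)$ serves as the engine for both claims.

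For $(i)$, I would first dispose of the degenerate case $a=b=0$, where both sides vanish. Assuming then that $a+b>0$, I would normalize by dividing through by $(a+b)^\gamma$: setting $s:=a/(a+b)$ and $t:=b/(a+b)$, one has $s,t\in[0,1]$ with $s+t=1$, and the desired inequality $(a+b)^\gamma\geq a^\gamma+b^\gamma$ becomes $s^\gamma+t^\gamma\leq 1$. The key elementary fact is that for any $r\in(0,1)$ and $\gamma>1$ we have $r^\gamma=r\cdot r^{\gamma-1}<r$, since $r^{\gamma-1}<1$ when $\gamma-1>0$ and $r<1$. Applying this to $s$ and $t$ (with equality absorbed at the endpoints $0,1$) yields $s^\gamma+t^\gamma\leq s+t=1$, which is exactly what is needed. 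An equivalent route, which I mention only as an alternative, is to fix $b$ and check that $f(a):=(a+b)^\gamma-a^\gamma-b^\gamma$ satisfies $f(0)=0$ and $f'(a)=\gamma\big((a+b)^{\gamma-1}-a^{\gamma-1}\big)\geq 0$, using that $t\mapsto t^{\gamma-1}$ is nondecreasing; the normalization argument is cleaner and I would present that one.

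For $(ii)$, I would reduce to $(i)$ by the substitution $u:=a^\gamma$, $v:=b^\gamma$, $w:=c^\gamma$ and the exponent $p:=\gamma'/\gamma$. Since $\gamma'>\gamma>0$, we have $p>1$, and the hypothesis $a^\gamma+b^\gamma\leq c^\gamma$ reads $u+v\leq w$, while the conclusion $a^{\gamma'}+b^{\gamma'}\leq c^{\gamma'}$ reads $u^p+v^p\leq w^p$. Now applying $(i)$ with exponent $p>1$ to the nonnegative reals $u,v$ gives $u^p+v^p\leq(u+v)^p$; then monotonicity of $t\mapsto t^p$ on $[0,\infty)$ together with $u+v\leq w$ gives $(u+v)^p\leq w^p$. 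Chaining these two inequalities yields $u^p+v^p\leq w^p$, which is the claim.

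The only point requiring a little care is that $(i)$ must be invoked in $(ii)$ with the exponent $p=\gamma'/\gamma$ rather than with $\gamma$ itself; what makes the argument go through for all admissible $\gamma,\gamma'$ is precisely that $p>1$ whenever $\gamma'>\gamma$, independently of whether $\gamma$ exceeds $1$. Beyond this observation, both parts are elementary, and I expect no serious obstacle; the content of the lemma is the superadditivity of the map $t\mapsto t^\gamma$ for $\gamma>1$, from which the monotonicity statement $(ii)$ for the $\gamma$-sublevel condition follows immediately.
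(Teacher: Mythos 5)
Your proof is correct. Part $(ii)$ is identical to the paper's argument: both substitute $u:=a^{\gamma}$, $v:=b^{\gamma}$, $w:=c^{\gamma}$, invoke $(i)$ with the exponent $p:=\gamma'/\gamma>1$, and finish with monotonicity of $t\mapsto t^{p}$ — including the key observation that what matters is $p>1$, not $\gamma>1$. Part $(i)$ takes a genuinely different, and arguably cleaner, route: the paper defines $\psi(t):=(1+t)^{\gamma}-(1+t^{\gamma})$, computes $\psi'(t)=\gamma\left((1+t)^{\gamma-1}-t^{\gamma-1}\right)>0$ to conclude $\psi(t)\geq\psi(0)=0$, and then substitutes $t=a/b$ (treating $b=0$ separately); your normalization $s:=a/(a+b)$, $t:=b/(a+b)$ reduces the claim to $s^{\gamma}+t^{\gamma}\leq s+t=1$, which follows from the purely algebraic fact $r^{\gamma}=r\cdot r^{\gamma-1}\leq r$ for $r\in[0,1]$ and $\gamma>1$, with no differentiation needed. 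Your secondary alternative (differentiating $f(a):=(a+b)^{\gamma}-a^{\gamma}-b^{\gamma}$ in $a$) is essentially the paper's argument in un-normalized form. Both approaches are valid and of comparable length; yours buys a derivative-free proof of the superadditivity, while the paper's one-variable reduction via $t=a/b$ is the more traditional calculus presentation.
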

\begin{proof}
We first prove $(i)$. To this end, we define $\psi(t):=(1+t)^\gamma-(1+t^\gamma)$ and $\phi(t):=t^{\gamma-1}$, $t\in[0,+\infty)$. Then $\psi'(t)=\gamma\left((1+t)^{\gamma-1}-t^{\gamma-1}\right)$. If $\gamma>1$, since $\phi$ is strictly increasing on $[0,+\infty)$, we know that $\psi'(t)>0$, and hence $\psi$ is strictly increasing on $[0,+\infty)$. Thus $\psi(t)\geq\psi(0)=0$ for $t\in[0,+\infty)$. It is obvious that $(a+b)^\gamma\geq a^\gamma+b^\gamma$ holds for $b=0$. For the case $b>0$, we have $\psi\left(\frac{a}{b}\right)=\left(1+\frac{a}{b}\right)^\gamma-\left(1+\left(\frac{a}{b}\right)^\gamma\right)\geq0$, which implies that $(a+b)^\gamma-(a^\gamma+b^\gamma)\geq0$.

Now we employ $(i)$ to prove $(ii)$. Since $a^{\gamma}+b^{\gamma}\leq c^{\gamma}$ and $\frac{\gamma'}{\gamma}>1$, by writing $a^{\gamma'}+b^{\gamma'}={(a^{\gamma})}^{\frac{\gamma'}{\gamma}}+{(b^{\gamma})}^{\frac{\gamma'}{\gamma}}$ and using $(i)$, we have that
$$
a^{\gamma'}+b^{\gamma'}\leq (a^{\gamma}+b^{\gamma})^{\frac{\gamma'}{\gamma}}\leq (c^{\gamma})^{\frac{\gamma'}{\gamma}}=c^{\gamma'},
$$
which completes the proof.
\end{proof}

We establish the inclusion relation of the class of GAN operators with different exponents in the following proposition.

\begin{proposition}\label{prop_SNinclrel}
If $0<\gamma_1<\gamma_2$ and operator $T:\bRn\to\bRn$ is GAN with exponent $\gamma_1$, then $T$ is GAN with exponent $\gamma_2$.
\end{proposition}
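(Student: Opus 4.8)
The plan is to reduce the statement directly to part $(ii)$ of Lemma~\ref{gammapowerineq}. Suppose $T$ is $\mu$-GAN with exponent $\gamma_1$ for some $\mu\in\bR_+$. Fixing arbitrary $x,y\in\bRn$, I would abbreviate the three quantities appearing in the defining inequality \eqref{neq_SNdef} as
$$
a:=\|Tx-Ty\|,\quad b:=\|(\mI-T)x-(\mI-T)y\|,\quad c:=\|x-y\|,
$$
so that the hypothesis reads $a^{\gamma_1}+\mu b^{\gamma_1}\leq c^{\gamma_1}$, and the goal is to produce some $\mu'\in\bR_+$ with $a^{\gamma_2}+\mu' b^{\gamma_2}\leq c^{\gamma_2}$.

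The one obstacle is that Lemma~\ref{gammapowerineq}$(ii)$ is stated for a sum of \emph{pure} powers, $a^{\gamma_1}+b^{\gamma_1}\leq c^{\gamma_1}$, with no weight in front of $b^{\gamma_1}$, whereas the GAN inequality carries the coefficient $\mu$. The key step to overcome this is to absorb $\mu$ into the second term by rescaling. Setting $\tilde b:=\mu^{1/\gamma_1}b$ gives $\tilde b^{\gamma_1}=\mu b^{\gamma_1}$, so the hypothesis becomes exactly $a^{\gamma_1}+\tilde b^{\gamma_1}\leq c^{\gamma_1}$, which matches the premise of the lemma.

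Applying Lemma~\ref{gammapowerineq}$(ii)$ with the roles $\gamma=\gamma_1$ and $\gamma'=\gamma_2>\gamma_1$ then yields $a^{\gamma_2}+\tilde b^{\gamma_2}\leq c^{\gamma_2}$. Unwinding the rescaling through $\tilde b^{\gamma_2}=\mu^{\gamma_2/\gamma_1}b^{\gamma_2}$ gives
$$
a^{\gamma_2}+\mu^{\gamma_2/\gamma_1}b^{\gamma_2}\leq c^{\gamma_2}.
$$
Since $x,y$ were arbitrary, this shows that $T$ satisfies \eqref{neq_SNdef} with exponent $\gamma_2$ and constant $\mu':=\mu^{\gamma_2/\gamma_1}$; as $\mu>0$ forces $\mu'\in\bR_+$, it is a valid GAN constant, so $T$ is $\mu'$-GAN with exponent $\gamma_2$. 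I expect no genuine difficulty here: the entire argument is a single invocation of the lemma, and the only subtlety is the bookkeeping of the weight $\mu$ as it passes through the rescaling $b\mapsto\mu^{1/\gamma_1}b$.
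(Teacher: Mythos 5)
Your proposal is correct and follows essentially the same route as the paper: the paper likewise applies Lemma~\ref{gammapowerineq}$(ii)$ with $a:=\|Tx-Ty\|$, $b:=\mu^{1/\gamma_1}\|(\mI-T)x-(\mI-T)y\|$, $c:=\|x-y\|$, absorbing $\mu$ into the second term exactly as you do, and arrives at the same new constant $\mu^{\gamma_2/\gamma_1}$.
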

\begin{proof}
Since $T$ is GAN with exponent $\gamma_1$, there exists $\mu\in\bR_+$ such that
$$
\|Tx-Ty\|^{\gamma_1}+\mu\|(\mI-T)x-(\mI-T)y\|^{\gamma_1}\leq\|x-y\|^{\gamma_1}, \ \  \mbox{for all} \ \ x,y\in\bRn.
$$
Applying Lemma \ref{gammapowerineq} $(ii)$ with $a:=\|Tx-Ty\|$, $b:=\mu^{\frac{1}{\gamma_1}}\|(\mI-T)x-(\mI-T)y\|$, $c:=\|x-y\|$, $\gamma:=\gamma_1$ and $\gamma':=\gamma_2$, we obtain that
$$
\|Tx-Ty\|^{\gamma_2}+\mu^{\frac{\gamma_2}{\gamma_1}}\|(\mI-T)x-(\mI-T)y\|^{\gamma_2}\leq\|x-y\|^{\gamma_2}, $$
which implies that $T$ is GAN with exponent $\gamma_2$.
\end{proof}

By the above proof, we can also know that if $\mu\geq1$, then $\mu$-generalized averaged nonexpansiveness with exponent $\gamma_1$ implies $\mu$-generalized averaged nonexpansiveness with exponent $\gamma_2$. 

We next show that contractivity implies generalized averaged nonexpansiveness.

\begin{proposition}\label{prop_contrsubSN}
If operator $T:\bRn\to\bRn$ is $\rho$-contractive for some $\rho\in(0,1)$, then it is $\hat{\rho}$-GAN with exponent $\gamma$, where $\gamma\in\bR_+$ is an arbitrarily fixed number and $\hat{\rho}:=\frac{1-\rho^\gamma}{(1+\rho)^\gamma}$.
\end{proposition}
\begin{proof}
Since $T$ is $\rho$-contractive, for any fixed $\gamma\in \bR_+$, we have that
\begin{equation}\label{CONTRACT}
\|Tx-Ty\|^\gamma\leq\rho^\gamma\|x-y\|^\gamma,\ \ \text{for all}\ \ x,y\in\bRn.
\end{equation}
We choose $\hat{\rho}:=\frac{1-\rho^\gamma}{(1+\rho)^\gamma}$ and observe that $\rho^\gamma=1-\hat{\rho}(1+\rho)^\gamma$. We thus obtain that
$$
\rho^\gamma\|x-y\|^\gamma
=\|x-y\|^\gamma-\hat{\rho}(\|x-y\|+\rho\|x-y\|)^\gamma.
$$
By the definition \eqref{contractineq} of the contractive operator and the triangle inequality, we find that
$$
\|x-y\|+\rho\|x-y\|\geq\|x-y\|+\|Tx-Ty\|\geq\|(\mI-T)x-(\mI-T)y\|.
$$
Substituting this inequality into the right hand side of the above equation and then combining with \eqref{CONTRACT}, we get that
$$
\|Tx-Ty\|^\gamma\leq\|x-y\|^\gamma-\hat{\rho}\|(\mI-T)x-(\mI-T)y\|^\gamma,
$$
which proves the desired result.
\end{proof}

Proposition \ref{prop_contrsubSN} provides the inclusion of the class of contractive operators in the class of GAN operators with exponent $\gamma$ for any $\gamma\in\bR_+$. Moreover, the class of contractive operators is a proper subset of the class of GAN operators (see, Example \ref{SNproxabs} to be presented later).

We next investigate the inclusion relation of the class of FP-contractive operators and the class of GAN operators with exponent $\gamma\in(0,1)$. We now  define the FP-contractive operator. For $T\in\Lambda$, if there exists $\rho\in(0,1)$ such that
\begin{equation}\label{neq_QCdef}
\|Tx-\hx\|\leq\rho\|x-\hat{x}\|, \ \  \mbox{for all}\ \ x\in\bRn\backslash\FixT, \ \hx\in\FixT,
\end{equation}
then we say that $T$ is $\rho$-contractive with respect to its fixed-point set (or FP-$\rho$-contractive). From the definition of the FP-contractivity, contractive operators must be FP-contractive. However, an FP-contractive operator may not be contractive. For example, the identity operator $\mI$ is FP-contractive but not contractive. In addition, the fixed-point of a FP-contractive operator may not be unique. 

We need a technical lemma on the monotonicity of the function $\psi$ defined below. For $\gamma\in\bR_+$, let 
\begin{equation}\label{psialpha}
\psi(\alpha):=\frac{1-\alpha^\gamma}{(1-\alpha)^\gamma}, \ \ \alpha\in[0,1). 
\end{equation}

\begin{lemma}\label{lemma_SNsubQC}
If $\gamma<1$, then $\psi$ is strictly decreasing on $(0,1)$ and $\lim_{\alpha\to1^-}\psi(\alpha)=0$. If $\gamma>1$, then $\psi$ is strictly increasing on $(0,1)$ and $\lim_{\alpha\to1^-}\psi(\alpha)=\infty$.
\end{lemma}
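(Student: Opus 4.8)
The plan is to treat the claim as a one-variable calculus statement, since $\psi$ is a smooth real-valued function on $(0,1)$. I would establish the monotonicity by computing $\psi'$ directly and reading off its sign, and then settle the limit separately by removing the indeterminate form.

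First I would differentiate $\psi(\alpha)=\frac{1-\alpha^\gamma}{(1-\alpha)^\gamma}$ by the quotient rule. Writing $u(\alpha):=1-\alpha^\gamma$ and $v(\alpha):=(1-\alpha)^\gamma$, we have $u'(\alpha)=-\gamma\alpha^{\gamma-1}$ and $v'(\alpha)=-\gamma(1-\alpha)^{\gamma-1}$. The numerator $u'v-uv'$ of the quotient rule has a common factor $\gamma(1-\alpha)^{\gamma-1}$, and after pulling it out the remaining bracket is $-\alpha^{\gamma-1}(1-\alpha)+(1-\alpha^\gamma)$; expanding and cancelling the two copies of $\alpha^\gamma$ collapses it to $1-\alpha^{\gamma-1}$. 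The upshot is the clean expression
$$
\psi'(\alpha)=\frac{\gamma\left(1-\alpha^{\gamma-1}\right)}{(1-\alpha)^{\gamma+1}},\qquad \alpha\in(0,1).
$$

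Second, I would read off the sign of $\psi'$. Since $\gamma>0$ and $(1-\alpha)^{\gamma+1}>0$ on $(0,1)$, the sign of $\psi'(\alpha)$ equals that of $1-\alpha^{\gamma-1}$. For $\alpha\in(0,1)$ the quantity $\alpha^{\gamma-1}$ is strictly greater than $1$ when $\gamma<1$ (a base in $(0,1)$ raised to a negative power) and strictly less than $1$ when $\gamma>1$ (a base in $(0,1)$ raised to a positive power). Hence $\psi'<0$ throughout $(0,1)$ when $\gamma<1$ and $\psi'>0$ throughout $(0,1)$ when $\gamma>1$, which gives the asserted strict monotonicity in the two regimes.

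Finally, for the limit I would remove the $0/0$ indeterminacy by the factorization $\psi(\alpha)=(1-\alpha)^{1-\gamma}\cdot\frac{1-\alpha^\gamma}{1-\alpha}$. The second factor tends to $\gamma$ as $\alpha\to1^-$, being the difference quotient of $t\mapsto t^\gamma$ at $t=1$, while $(1-\alpha)^{1-\gamma}$ tends to $0$ when $\gamma<1$ and to $\infty$ when $\gamma>1$; combining these yields $\lim_{\alpha\to1^-}\psi(\alpha)=0$ and $\infty$ respectively (L'H\^opital's rule applied to $\psi$ directly gives the same values). I do not anticipate a genuine obstacle, as everything is elementary calculus; the only step warranting care is the algebraic simplification of $\psi'$, where mishandling the exponents is easy, so the collapse of the bracket to $1-\alpha^{\gamma-1}$ is the point worth double-checking.
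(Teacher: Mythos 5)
Your proof is correct and follows essentially the same route as the paper: the derivative computation and sign analysis are identical, and your factorization $\psi(\alpha)=(1-\alpha)^{1-\gamma}\cdot\frac{1-\alpha^\gamma}{1-\alpha}$ for the limit is an equivalent elementary substitute for the paper's single application of L'H\^opital's rule, which reduces the limit to $\lim_{\alpha\to1^-}\left(\frac{\alpha}{1-\alpha}\right)^{\gamma-1}$.
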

\begin{proof}
It follows from the definition of $\psi$ that
$$
\psi'(\alpha)=\frac{\gamma}{(1-\alpha)^{\gamma+1}}(1-\alpha^{\gamma-1}),\ \ \alpha\in(0,1),
$$
which is negative for $\gamma<1$ and positive for $\gamma>1$. Hence, $\psi$ is strictly decreasing (resp., increasing) on $(0,1)$ if $\gamma<1$ (resp., $\gamma>1$). We now consider $\lim_{\alpha\to1^-}\psi(\alpha)$. By L'Hospital's Rule, we have that
$$
\lim_{\alpha\to1^-}\psi(\alpha)=\lim_{\alpha\to1^-}\left(\frac{\alpha}{1-\alpha}\right)^{\gamma-1},
$$
which is equal to $0$ for $\gamma<1$ and equal to infinity for $\gamma>1$.
\end{proof}

\begin{proposition}\label{prop_SNsubQC}
If $T\in\Lambda$ is GAN with exponent $\gamma$ for some $\gamma\in(0,1)$, then it is FP-$\rho$-contractive for some $\rho\in(0,1)$.
\end{proposition}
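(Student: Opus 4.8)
The plan is to convert the defining inequality of generalized averaged nonexpansiveness into a scalar inequality for the ratio $\alpha := \|Tx - \hat{x}\|/\|x - \hat{x}\|$ and then exploit the monotonicity of $\psi$ from Lemma \ref{lemma_SNsubQC} to bound $\alpha$ uniformly away from $1$. Since $T$ is GAN with exponent $\gamma$, there is $\mu\in\bR_+$ for which \eqref{neq2_SNdef} holds; because any $\mu'\in(0,\mu]$ also satisfies \eqref{neq2_SNdef}, I may assume without loss of generality that $\mu\in(0,1)$. I fix $\hat{x}\in\FixT$ and $x\in\bRn\setminus\FixT$ and abbreviate $a:=\|Tx-\hat{x}\|$, $b:=\|Tx-x\|$ and $c:=\|x-\hat{x}\|$, noting that $c>0$ and $b>0$ because $x\notin\FixT$.

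First I would record the two elementary facts that drive the scalar reduction. From \eqref{neq2_SNdef} we have $a^\gamma+\mu b^\gamma\leq c^\gamma$, and the triangle inequality applied to $x-\hat{x}=(x-Tx)+(Tx-\hat{x})$ gives $b\geq c-a$. Since $t\mapsto t^\gamma$ is increasing on $[0,\infty)$ and $\mu b^\gamma>0$, the first fact already forces $a<c$, so $\alpha=a/c\in[0,1)$; substituting $b\geq c-a$ into the first fact yields $a^\gamma+\mu(c-a)^\gamma\leq c^\gamma$. Dividing by $c^\gamma$ produces the key scalar inequality $\alpha^\gamma+\mu(1-\alpha)^\gamma\leq 1$, which for $\alpha\in(0,1)$ rearranges to $\mu\leq\psi(\alpha)$ with $\psi$ as in \eqref{psialpha}.

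The decisive step is to convert $\mu\leq\psi(\alpha)$ into a uniform upper bound on $\alpha$, and this is exactly where $\gamma<1$ enters. By Lemma \ref{lemma_SNsubQC}, $\psi$ is strictly decreasing on $(0,1)$ with $\lim_{\alpha\to1^-}\psi(\alpha)=0$, while $\psi(0)=1$ follows directly from \eqref{psialpha}. Hence, since $\mu\in(0,1)$, there is a unique $\rho\in(0,1)$ with $\psi(\rho)=\mu$, and the strict monotonicity of $\psi$ turns $\psi(\alpha)\geq\mu$ into $\alpha\leq\rho$. As $\alpha=0$ also satisfies $\alpha\leq\rho$, I conclude $\|Tx-\hat{x}\|\leq\rho\|x-\hat{x}\|$ for every $x\in\bRn\setminus\FixT$ and every $\hat{x}\in\FixT$, with $\rho$ independent of $x$ and $\hat{x}$, which is precisely FP-$\rho$-contractivity \eqref{neq_QCdef}.

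The main obstacle I anticipate is not any individual estimate but securing a constant $\rho$ that is genuinely uniform in $x$ and strictly below $1$; this hinges entirely on the asymptotic behaviour $\psi(\alpha)\to0$ as $\alpha\to1^-$, which holds only for $\gamma<1$ (for $\gamma>1$ one instead has $\psi(\alpha)\to\infty$, and the same argument yields no bound, consistent with the hypothesis $\gamma\in(0,1)$). The secondary bookkeeping points are the harmless reduction to $\mu<1$ and the fact that $\alpha=1$ cannot occur, both of which follow from the strict positivity of $\mu b^\gamma$ whenever $x\notin\FixT$.
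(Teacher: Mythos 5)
Your proof is correct and rests on exactly the same ingredients as the paper's: the scalar inequality $\alpha^\gamma+\mu(1-\alpha)^\gamma\leq 1$ extracted via the triangle inequality, and the strict decrease and limit $\psi(\alpha)\to 0$ as $\alpha\to 1^-$ from Lemma \ref{lemma_SNsubQC}. The only difference is organizational: the paper runs this computation as a proof by contradiction, whereas you argue directly and in doing so obtain an explicit contraction factor $\rho=\psi^{-1}(\mu)$ (after the harmless reduction to $\mu<1$); there is no gap.
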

\begin{proof}
We prove this proposition by contradiction. Assume to the contrary that $T$ is not FP-$\rho$-contractive for any $\rho\in(0,1)$. That is, for any $\rho\in(0,1)$, there exist $x\in\bRn\backslash\FixT$ and $\hx\in\FixT$ such that $\|Tx-\hx\|>\rho\|x-\hx\|$. We next prove that $T$ is not GAN with exponent $\gamma$ for any $\gamma\in(0,1)$, that is, for any $\gamma\in(0,1)$ and any $\mu\in\bR_+$, there exist $x\in\bRn\backslash\FixT$ and $\hx\in\FixT$ such that
\begin{equation}\label{SNFPcontrConIn}
\|Tx-\hx\|^\gamma+\mu\|Tx-x\|^\gamma>\|x-\hx\|^\gamma.
\end{equation}

By Lemma \ref{lemma_SNsubQC}, for any $\gamma\in(0,1)$, $\psi$ defined by \eqref{psialpha} is continuous and strictly decreasing on $(0,1)$, and $\lim_{\alpha\to1^-}\psi(\alpha)=0$. This ensures that for any $\mu>0$, there exists $\rho_{\mu,\gamma}\in(0,1)$ such that $\mu>\psi(\rho_{\mu,\gamma})$. By the contradiction hypothesis, for this $\rho_{\mu,\gamma}$, there exist some $x\in\bRn\backslash\FixT$ and $\hx\in\FixT$ such that $\|Tx-\hx\|>\rho_{\mu,\gamma}\|x-\hx\|$. We next prove that \eqref{SNFPcontrConIn} holds.

Since $x\in\bRn\backslash\FixT$, we know that $\|Tx-x\|>0$. If $\|Tx-\hx\|\geq\|x-\hx\|$,  it is clear that \eqref{SNFPcontrConIn} holds since $\mu\|Tx-x\|^\gamma>0$. If $\|Tx-\hx\|<\|x-\hx\|$, we choose $\rho_{\mu,\gamma}':=\|Tx-\hx\|/\|x-\hx\|$. We then observe that $\rho_{\mu,\gamma}'\in(\rho_{\mu,\gamma},1)$ and satisfies
\begin{equation}\label{Txhxeqrhomu}
\|Tx-\hx\|=\rho_{\mu,\gamma}'\|x-\hx\|,
\end{equation}
and $\mu>\psi(\rho_{\mu,\gamma}')$, due to the monotone decreasingness of $\psi$ on $(0,1)$. This inequality together with $\|Tx-x\|>0$ and the definition of $\psi$ implies that 
$$
\mu\|Tx-x\|^\gamma>\psi(\rho_{\mu,\gamma}')\|Tx-x\|^\gamma=\frac{1-{\rho_{\mu,\gamma}'}^\gamma}{(1-\rho_{\mu,\gamma}')^\gamma}\|Tx-x\|^\gamma.
$$
This inequality combined with the triangle inequality $\|x-\hx\|\leq \|Tx-x\|+\|Tx-\hx\|$ and \eqref{Txhxeqrhomu} yields that
$$
\mu\|Tx-x\|^\gamma>\frac{1-{\rho_{\mu,\gamma}'}^\gamma}{(1-\rho_{\mu,\gamma}')^\gamma}(\|x-\hx\|-\|Tx-\hx\|)^\gamma=\left(1-{\rho_{\mu,\gamma}'}^\gamma\right)\|x-\hx\|^\gamma.
$$
Combining the above inequality and  \eqref{Txhxeqrhomu} leads to
$$
\|Tx-\hx\|^\gamma+\mu\|Tx-x\|^\gamma>(\rho_{\mu,\gamma}'\|x-\hx\|)^\gamma+\left(1-{\rho_{\mu,\gamma}'}^\gamma\right)\|x-\hx\|^\gamma=\|x-\hx\|^\gamma.
$$
This is \eqref{SNFPcontrConIn}, a contradiction to the generalized averaged nonexpansiveness of $T$ with exponent $\gamma$ for some $\gamma\in(0,1)$. Therefore, we complete the proof that $T$ is FP-$\rho$-contractive for some $\rho\in(0,1)$.
\end{proof}

We next demonstrate by a one dimensional example that the class of contractive operators is a proper subset of the class of GAN operators with exponent 1. To this end, we first establish a technical lemma. We mention here that a one-dimensional operator $T:\bR\to\bR$ is said to be monotonically increasing if $Tx\geq Ty$ for any $x,y\in\bR$ satisfying that $x>y$.

\begin{lemma}\label{examplemma}
If operator $T:\bR\to\bR$ is nonexpansive and monotonically increasing, then it is GAN with exponent 1.
\end{lemma}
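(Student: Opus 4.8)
The plan is to verify Definition~\ref{def_SN} directly with exponent $\gamma=1$ by exhibiting an explicit admissible constant; in fact $\mu=1$ will work, and the defining inequality \eqref{neq_SNdef} will hold with equality. The central observation is that combining monotonicity with nonexpansiveness pins down not merely the magnitude but also the \emph{sign} of $Tx-Ty$ relative to $x-y$, and this sign information is exactly what lets me evaluate the displacement term $\|(\mI-T)x-(\mI-T)y\|$ precisely rather than merely bounding it. (On $\bR$ the ambient norm is a positive multiple of the absolute value, and since \eqref{neq_SNdef} with $\gamma=1$ is homogeneous of degree one in the norm, the multiple cancels; I will therefore work with $|\cdot|$.)

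First I would fix $x,y\in\bR$ and, using the symmetry of \eqref{neq_SNdef} in $x$ and $y$, assume without loss of generality that $x\ge y$, so that $x-y\ge 0$. Monotonic increasingness then gives $Tx\ge Ty$, hence $Tx-Ty\ge 0$, while nonexpansiveness gives $Tx-Ty\le x-y$. Putting these together yields the two-sided bound
\[
0\le Tx-Ty\le x-y .
\]

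Next I would compute the displacement term exactly. Since $(\mI-T)x-(\mI-T)y=(x-y)-(Tx-Ty)$ and the right-hand side is nonnegative by the bound above, its absolute value is simply $(x-y)-(Tx-Ty)$. Substituting this together with $|Tx-Ty|=Tx-Ty$ into the left-hand side of \eqref{neq_SNdef} with $\gamma=1$ and $\mu=1$ gives
\[
|Tx-Ty|+\bigl|(\mI-T)x-(\mI-T)y\bigr|=(Tx-Ty)+\bigl((x-y)-(Tx-Ty)\bigr)=x-y=|x-y|,
\]
so the defining inequality holds, showing that $T$ is $1$-GAN with exponent $1$.

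I do not anticipate a genuine obstacle here; the single point requiring care is that the identity $|b-a|=|b|-|a|$ used to evaluate the displacement term is valid only because $a:=Tx-Ty$ and $b:=x-y$ share a sign and satisfy $|a|\le|b|$ — precisely the two facts supplied by monotonicity and nonexpansiveness. Without the monotonicity hypothesis one could invoke only the triangle inequality $|b-a|\le|a|+|b|$, which points in the wrong direction; it is therefore essential that the sign information be extracted before computing the displacement. Finally, I note that any $\mu\in(0,1]$ works equally well, since $(1-\mu)(Tx-Ty)\le(1-\mu)(x-y)$ by the bound above, so $\mu=1$ is simply the sharpest admissible choice.
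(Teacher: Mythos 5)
Your proof is correct and follows essentially the same route as the paper's: reduce without loss of generality to $x\ge y$, use monotonicity to get $Tx-Ty\ge0$ and nonexpansiveness to get $Tx-Ty\le x-y$, and observe that the defining inequality with $\mu=1$ then holds with equality. The paper's proof is identical in substance, merely written with $t_1,t_2$ in place of $x,y$.
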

\begin{proof}
It suffices to prove that for all $t_1,t_2\in\bR$,
\begin{equation}\label{ineq1_examplemma}
|T(t_1)-T(t_2)|+|(t_1-t_2)-(T(t_1)-T(t_2))|\leq|t_1-t_2|.
\end{equation}
If $t_1=t_2$, \eqref{ineq1_examplemma} clearly holds. Without loss of generality, we prove that \eqref{ineq1_examplemma} holds for the case $t_1>t_2$. In this case, we know that $T(t_1)\geq T(t_2)$ since $T$ is monotonically increasing. Furthermore, the nonexpansiveness of $T$ implies that $T(t_1)-T(t_2)\leq t_1-t_2$. Therefore,
$$
|T(t_1)-T(t_2)|+|(t_1-t_2)-(T(t_1)-T(t_2))|=|t_1-t_2|,
$$
which completes the proof.
\end{proof}

\begin{example}\label{SNproxabs}
Let $\lambda\in\bR_+$ and $T:=\prox_{\lambda|\cdot|}$. Then $T$ is GAN with exponent 1, but it is not GAN with exponent $\gamma$ for any $\gamma\in(0,1)$ and nor contractive.
\end{example}
\begin{proof}
Note that $T$ is firmly nonexpansive \cite{combettes2005signal}, and hence it is nonexpansive. It follows from Example 2.3 in \cite{micchelli2011proximity} that
$$
T(x)=\begin{cases}
x-\lambda,&\text{if}\ x>\lambda,\\
0,&\text{if}\ -\lambda\leq x\leq\lambda,\\
x+\lambda,&\text{if}\ x<-\lambda,
\end{cases}
$$
which is monotonically increasing. Then we conclude from Lemma \ref{examplemma} that $T$ is GAN with exponent $1$.

We next show that $T$ is not GAN with exponent $\gamma$ for all $\gamma\in(0,1)$. Suppose that there exists some $\gamma\in(0,1)$ such that $T$ is GAN with exponent $\gamma$. Since $\FixT=\{0\}$, there exists $\mu\in\bR_+$ such that
$$
|Tx-0|^\gamma+\mu|Tx-x|^\gamma\leq|x-0|^\gamma,\ \ \mbox{for all}\ \ x\in\bR,
$$
that is,
\begin{equation}\label{ineq_absproxexam}
\mu|Tx-x|^\gamma\leq|x|^\gamma-|Tx|^\gamma\ \ \mbox{for all}\ \ x\in\bR.
\end{equation}
Since $Tx=x-\lambda$ for $x>\lambda$, we have $|Tx-x|^\gamma=\lambda^\gamma$. Then \eqref{ineq_absproxexam} implies that
\begin{equation}\label{ineq2_absproxexam}
\mu\leq\lambda^{-\gamma}\left[x^\gamma-(x-\lambda)^\gamma\right]\ \ \mbox{for all}\ \ x\in(\lambda,\infty).
\end{equation}
Let $\varphi(x):=x^\gamma-(x-\lambda)^\gamma$, $x\in(\lambda,\infty)$. It is obvious that $\varphi$ is continuous on $(\lambda,\infty)$. Moreover, by letting $x=\frac{1}{t}$ and using L'Hospital's rule, we have that
\begin{align*}
\lim_{x\to\infty}\varphi(x)=\lim_{t\to0}\frac{1-(1-\lambda t)^\gamma}{t^\gamma}=\lim_{t\to0}\frac{\lambda(1-\lambda t)^{\gamma-1}}{t^{\gamma-1}}=\lambda\lim_{t\to0}\left(\frac{1}{t}-\lambda\right)^{\gamma-1}=0
\end{align*}
for $\gamma\in(0,1)$. This implies that for any $\mu>0$, there exists sufficient large $x\in\bR_+$ such that $\mu>\lambda^{-\gamma}\varphi(x)$, which contradicts \eqref{ineq2_absproxexam}. Thus $T$ is not GAN with exponent $\gamma$. According to Proposition \ref{prop_contrsubSN}, we know that $T$ is not FP-contractive. Naturally, it is not contractive either.
\end{proof}

We mention here that the projection operator $P_E$ onto a closed convex set $E\subset\bRn $ may not be GAN with exponent 1. In addition, neither the proximity operator of $\ell_1$ norm nor the proximity operator of $\ell_2$ norm is GAN with exponent 1 with respect to $\ell_2$ norm when the dimension is greater than 2. The $\ell_1$ norm is defined by $\|x\|_1:=\sum_{i=1}^n|x_i|$ for $x\in\bRn$.

We next provide a theorem showing that there exists a class of GAN operators with exponent 1 for a high-dimensional case. An example satisfies this theorem will be given later in Corollary \ref{LSexample}.

\begin{theorem}\label{thm_TdbLipSN1}
Let $T:\bRn\to\bRn$ be a firmly nonexpansive operator. If there exists $\alpha\in(0,1]$ such that 
\begin{equation}\label{InvLip}
\|Tx-Ty\|\geq\alpha\|x-y\|,\ \ \mbox{for all}\ \ x,y\in\bRn,
\end{equation}
then for $\beta\in\left(0,2\right)$, $\mI-\beta T$ is GAN with exponent 1.
\end{theorem}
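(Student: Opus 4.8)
The plan is to verify Definition~\ref{def_SN} with $\gamma=1$ directly for $S:=\mI-\beta T$. Writing $u:=x-y$ and $w:=Tx-Ty$, and noting that $\mI-S=\beta T$ so that $(\mI-S)x-(\mI-S)y=\beta w$, the defining inequality~\eqref{neq_SNdef} with exponent $1$ becomes the requirement that there exist $\mu\in\bR_+$ with
$$
\|u-\beta w\|+\mu\beta\|w\|\leq\|u\|,\qquad\text{for all}\ x,y\in\bRn.
$$
The case $u=0$ (equivalently, by \eqref{InvLip}, $w=0$) is trivial, so I assume $\|u\|>0$; then \eqref{InvLip} gives $\|w\|\geq\alpha\|u\|>0$.

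First I would exploit firm nonexpansiveness to control $\|u-\beta w\|$. Expanding the square and using $\langle u,w\rangle\geq\|w\|^2$ gives
$$
\|u-\beta w\|^2=\|u\|^2-2\beta\langle u,w\rangle+\beta^2\|w\|^2\leq\|u\|^2-\beta(2-\beta)\|w\|^2,
$$
where $\beta(2-\beta)>0$ because $\beta\in(0,2)$. Since firm nonexpansiveness also implies $\|w\|\leq\|u\|$, for any $\mu\in(0,1/\beta]$ the quantity $\|u\|-\mu\beta\|w\|$ is nonnegative and the target inequality may be squared. Squaring and cancelling $\|u\|^2$ reduces the target to $2\mu\beta\|u\|\|w\|\leq\mu^2\beta^2\|w\|^2+\beta(2-\beta)\|w\|^2$; dividing by $\beta\|w\|>0$ and bounding $\|u\|\leq\|w\|/\alpha$ via \eqref{InvLip}, it suffices to choose $\mu>0$ with
$$
\beta\mu^2-\tfrac{2}{\alpha}\mu+(2-\beta)\geq0.
$$

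This quadratic in $\mu$ opens upward and equals $2-\beta>0$ at $\mu=0$; its discriminant $\tfrac{4}{\alpha^2}-4\beta(2-\beta)$ is nonnegative because $\alpha\leq1$ forces $1/\alpha^2\geq1\geq\beta(2-\beta)$. Hence its smaller root $\mu:=\frac{1-\sqrt{1-\alpha^2\beta(2-\beta)}}{\alpha\beta}$ is a positive number at which the quadratic vanishes, and a short computation using $\alpha(1+\beta(2-\beta))\leq2$ confirms $\mu\leq1/\beta$, so this single choice simultaneously guarantees nonnegativity of the right-hand side and validity of the squaring step. This establishes \eqref{neq_SNdef} with $\gamma=1$ for the chosen $\mu$, proving that $S=\mI-\beta T$ is GAN with exponent~$1$.

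The main obstacle is not any individual estimate but their interplay. Firm nonexpansiveness alone only yields a gain of order $\|w\|^2$ in $\|u-\beta w\|^2$, which is the wrong power for an exponent-$1$ inequality; the inverse-Lipschitz bound~\eqref{InvLip} is therefore essential to convert the quadratic gain $\beta(2-\beta)\|w\|^2$ into a linear term comparable to $\mu\beta\|w\|$. One must then check that the admissible range of $\mu$ coming from the quadratic is compatible with the nonnegativity $\mu\leq1/\beta$ needed to square the norm inequality, and it is precisely the constraints $\alpha\in(0,1]$ and $\beta\in(0,2)$ that make these two requirements consistent. Keeping track of both constraints on $\mu$ is the delicate bookkeeping in the argument.
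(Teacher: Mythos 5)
Your proof is correct and follows essentially the same route as the paper's: both arguments reduce the exponent-$1$ inequality to the squared form $\|u-\beta w\|^2\leq(\|u\|-\mu\beta\|w\|)^2$ via firm nonexpansiveness and the lower bound \eqref{InvLip}, verify $\mu\beta\|w\|\leq\|u\|$ to justify taking square roots, and your quadratic condition $\beta\mu^2-\frac{2}{\alpha}\mu+(2-\beta)\geq0$ together with $\mu\leq1/\beta$ is exactly the pair of constraints \eqref{TLipSNbetaneq1} the paper imposes on $\mu$. The only difference is cosmetic: you exhibit an explicit admissible $\mu$ as the smaller root of the quadratic, whereas the paper simply asserts that a sufficiently small $\mu\in(0,\alpha)$ works.
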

\begin{proof}
It suffices to show that there exists $\mu\in\bR_+$ such that for all $x,y\in\bRn$,
\begin{equation}\label{TLipSNneq0}
\left\|(x-y)-\beta\left(Tx-Ty\right)\right\|+\mu\left\|\beta(Tx-Ty)\right\|\leq\|x-y\|.
\end{equation}
For any $\beta\in\left(0,2\right)$, we are able to find some sufficient small $\mu\in(0,\alpha)$ such that the following two inequalities hold:
\begin{equation}\label{TLipSNbetaneq1}
\beta\leq\frac{1}{\mu}\ \text{and}\ \beta\leq\frac{1}{1-\mu^2}\left(2-\frac{2\mu}{\alpha}\right).
\end{equation}
Let $w:=x-y$, $v:=Tx-Ty$. It follows from the second inequality of \eqref{TLipSNbetaneq1} that
$$
2-(1-\mu^2)\beta>0\ \text{and}\ \frac{2\mu}{2-(1-\mu^2)\beta}\leq\alpha,
$$
which together with \eqref{InvLip} imply that 
$$
\|v\|\geq\frac{2\mu}{2-(1-\mu^2)\beta}\|w\|,
$$
and hence
$$
\left(2\beta-(1-\mu^2)\beta^2\right)\|v\|^2\geq2\mu\beta\|w\|\|v\|.
$$
Further, by the firm nonexpansiveness of $T$, i.e., $\|v\|^2\leq\langle w,v\rangle$, we get that
$$
(1-\mu^2)\beta^2\|v\|^2+2\mu\beta\|w\|\|v\|\leq2\beta\langle w,v\rangle,
$$
which is equivalent to
\begin{equation}\label{TLipSNneq6}
\|w-\beta v\|^2\leq(\|w\|-\mu\|\beta v\|)^2.
\end{equation}
The nonexpansiveness of $T$ and the first inequality of \eqref{TLipSNbetaneq1} give that $\mu\|\beta v\|\leq\|w\|$. This combines with \eqref{TLipSNneq6} implies that
$$
\|w-\beta v\|\leq\|w\|-\mu\|\beta v\|,
$$
that is, \eqref{TLipSNneq0} holds. Therefore, $\mI-\beta T$ is GAN with exponent 1.
\end{proof}

Note that the identity operator is the trivial GAN operator with exponent $\gamma$ for any $\gamma\in\bR_+$. In the next proposition, we identify ranges of $\mu$ for the non-triviality of GAN operators for different ranges of $\gamma$. To simplify the notation, throughout the remaining part of this paper, we define
\begin{equation}\label{OmegamuSN}
\ymuSNset:=\{T\in\Lambda: \text{$T$ is $\mu$-GAN with exponent $\gamma$}\}.
\end{equation}

\begin{proposition}\label{prop_nontrivial}
Let $\gamma,\mu\in\bR_+$. 
\begin{itemize}
\item[$(i)$] For any $\gamma\leq1$,
$$
\ymuSNset\backslash\{\mI\}\neq\varnothing\ \ \mbox{if and only if}\ \ \mu\leq1.
$$
\item[$(ii)$] For any $\gamma>1$ and $\mu\in\bR_+$, $\ymuSNset\backslash\{\mI\}\neq\varnothing$.
\end{itemize}
\end{proposition}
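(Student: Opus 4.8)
The plan is to treat part $(i)$ as an equivalence, proving the two implications separately, and to prove part $(ii)$ by exhibiting an explicit family of non-trivial operators. Both the sufficiency direction of $(i)$ and all of $(ii)$ are constructive and routine; the analytic core of the proposition is the necessity direction of $(i)$, namely that the existence of a non-identity $\mu$-GAN operator with exponent $\gamma\le1$ forces $\mu\le1$.

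For the constructions, I would first observe that the zero operator $T\equiv0$ belongs to $\Lambda$ (its only fixed point is $0$) and is distinct from $\mI$. For it, $\|Tx-Ty\|=0$ and $\|(\mI-T)x-(\mI-T)y\|=\|x-y\|$, so \eqref{neq_SNdef} reduces to $\mu\|x-y\|^\gamma\le\|x-y\|^\gamma$, which holds exactly when $\mu\le1$; this settles the sufficiency in $(i)$. For $(ii)$, with $\gamma>1$ and $\mu\in\bR_+$ arbitrary, I would use the scaled identity $T:=(1-\delta)\mI$ with $\delta\in(0,1)$, so that $\FixT=\{0\}$, $T\neq\mI$, and \eqref{neq_SNdef} becomes $\left[(1-\delta)^\gamma+\mu\delta^\gamma\right]\|x-y\|^\gamma\le\|x-y\|^\gamma$. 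It therefore suffices to choose $\delta$ with $(1-\delta)^\gamma+\mu\delta^\gamma\le1$. Since $\gamma-1>0$ gives $(1-\delta)^{\gamma-1}\le1$, we have $(1-\delta)^\gamma\le1-\delta$, while $\mu\delta^\gamma\le\delta$ as soon as $\mu\delta^{\gamma-1}\le1$; hence any $\delta\in(0,1)$ with $\delta\le(1/\mu)^{1/(\gamma-1)}$ works, proving $(ii)$.

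The remaining and main task is the necessity in $(i)$. Suppose $T\in\ymuSNset\setminus\{\mI\}$ with $\gamma\le1$. I would fix $\hx\in\FixT$ and a point $x$ with $Tx\neq x$, and set $a:=\|Tx-\hx\|$, $b:=\|x-\hx\|$, $d:=\|Tx-x\|$. Specializing \eqref{neq2_SNdef} gives $a^\gamma+\mu d^\gamma\le b^\gamma$; since $d>0$ and $\mu>0$, this forces $a<b$, and the triangle inequality $b\le d+a$ yields $d\ge b-a>0$. Combining these, $\mu(b-a)^\gamma\le\mu d^\gamma\le b^\gamma-a^\gamma$. I would then invoke the subadditivity of $t\mapsto t^\gamma$ on $[0,\infty)$ for $\gamma\in(0,1]$ (the reverse of Lemma \ref{gammapowerineq}$(i)$, provable by the same monotonicity argument with the auxiliary function now decreasing), which gives $b^\gamma=\left((b-a)+a\right)^\gamma\le(b-a)^\gamma+a^\gamma$, i.e. $b^\gamma-a^\gamma\le(b-a)^\gamma$. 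Dividing the chain $\mu(b-a)^\gamma\le b^\gamma-a^\gamma\le(b-a)^\gamma$ by $(b-a)^\gamma>0$ yields $\mu\le1$, as required. The subtle points to verify carefully are exactly where this argument could fail: that $a\le b$ indeed holds (this uses the nonexpansiveness of GAN operators noted after Definition \ref{def_SN} together with $T\hx=\hx$) and that the strict inequality $b>a$ is genuinely available, which in turn relies on choosing $x\notin\FixT$ so that $d>0$; without the latter the final division would not be legitimate.
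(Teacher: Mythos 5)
Your proposal is correct and follows essentially the same route as the paper: a constant operator (your zero operator) for the sufficiency in $(i)$, a scaled identity $\alpha\mI$ for $(ii)$, and for the necessity in $(i)$ the same chain $\mu\|Tx-x\|^\gamma\le\|x-\hx\|^\gamma-\|Tx-\hx\|^\gamma$ combined with the triangle inequality. The only cosmetic differences are that you verify the key inequality $b^\gamma-a^\gamma\le(b-a)^\gamma$ via subadditivity of $t\mapsto t^\gamma$ rather than via the monotonicity of $\psi$ in Lemma \ref{lemma_SNsubQC}, and you pick $\delta$ explicitly in $(ii)$ instead of invoking $\lim_{\alpha\to1^-}\psi(\alpha)=\infty$.
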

\begin{proof}
We first establish $(i)$. Suppose that $\mu\leq1$ and show that $\ymuSNset\backslash\{\mI\}\neq\varnothing$. It suffices to find some operator $T\neq\mI$ such that $T\in\ymuSNset$ for any $\gamma\in(0,1]$. To this end, we define $T:\bRn\to\bRn$ by $T(x):=z$ for all $x\in\bRn$, where $z\in\bRn$ is a constant vector. Since $\mu\leq1$, for any $\gamma\in\bR_+$ and for all $x,y\in\bRn$, we have that
$$
\|Tx-Ty\|^\gamma+\mu\|(\mI-T)x-(\mI-T)y\|^\gamma=\mu\|x-y\|^\gamma\leq\|x-y\|^\gamma,
$$
that is, \eqref{neq_SNdef} holds. Hence $T\in\ymuSNset\backslash\{\mI\}$ for any $\gamma\in(0,1]$. 

Conversely, for any $\gamma\in(0,1]$, if $\ymuSNset\backslash\{\mI\}\neq\varnothing$, then there exists $T\in\ymuSNset$ such that for some $x\in\bRn$, $Tx\neq x$. Since $T\in\ymuSNset$, for any given $\hx\in\FixT$, we have that
\begin{equation}\label{nontrivSNineq2}
\|Tx-\hx\|^\gamma+\mu\|Tx-x\|^\gamma\leq\|x-\hx\|^\gamma.
\end{equation}
We next prove that the validity of \eqref{nontrivSNineq2} implies $\mu\leq1$. By \eqref{nontrivSNineq2} and the fact that $\|Tx-x\|>0$, we know that $\|Tx-\hx\|<\|x-\hx\|$. Let $\alpha:=\frac{\|Tx-\hx\|}{\|x-\hx\|}$. Then $\alpha\in[0,1)$ and
\begin{equation}\label{nontrivTxhxa}
\|Tx-\hx\|=\alpha\|x-\hx\|. 
\end{equation}
Hence
\begin{equation}\label{nontrimurangeeq1}
\|x-\hx\|^\gamma-\|Tx-\hx\|^\gamma=(1-\alpha^\gamma)\|x-\hx\|^\gamma
\end{equation}
and, by the triangle inequality,
\begin{equation}\label{nontrimurangeeq2}
\|Tx-x\|^\gamma\geq\left(\|x-\hx\|-\|Tx-\hx\|\right)^\gamma=(1-\alpha)^\gamma\|x-\hx\|^\gamma.
\end{equation}
By combing \eqref{nontrivSNineq2}, \eqref{nontrimurangeeq1} and \eqref{nontrimurangeeq2}, we obtain that
\begin{equation}\label{nontrimurangeeq3}
\mu\leq\frac{\|x-\hx\|^\gamma-\|Tx-\hx\|^\gamma}{\|Tx-x\|^\gamma}\leq\frac{1-\alpha^\gamma}{(1-\alpha)^\gamma}=\psi(\alpha),\ \ \alpha\in[0,1),
\end{equation}
where $\psi$ is defined by \eqref{psialpha}.
It follows from Lemma \ref{lemma_SNsubQC} that for any $\gamma\in(0,1)$,
\begin{equation}\label{maxpsialpha}
\max_{\alpha\in[0,1)}\left\{\psi(\alpha)\right\}=1.
\end{equation}
It is obvious that \eqref{maxpsialpha} also holds for $\gamma=1$. That is, \eqref{maxpsialpha} holds for any $\gamma\in(0,1]$, which together with \eqref{nontrimurangeeq3} implies that $\mu\leq1$.

Now we prove $(ii)$. For any $\gamma>1$, if $\mu\leq1$, we have shown in $(i)$ that there exists a constant operator $T$ such that $T\in\ymuSNset$. If $\mu>1$, we let $\psi$ be defined by  \eqref{psialpha}. Since $\gamma>1$, by Lemma \ref{lemma_SNsubQC}, there exists $\alpha\in(0,1)$ such that $\psi(\alpha)\geq\mu$, that is,
\begin{equation}\label{nontrivalphay}
\alpha^\gamma+\mu(1-\alpha)^\gamma\leq1.
\end{equation}
We next verify that $\alpha\mI\in\ymuSNset$. By using \eqref{nontrivalphay}, for all $x,y\in\bRn$, we have that
$$
\|\alpha x-\alpha y\|^\gamma+\mu\|(1-\alpha)x-(1-\alpha)y\|^\gamma=\left(\alpha^\gamma+\mu(1-\alpha)^\gamma\right)\|x-y\|^\gamma\leq\|x-y\|^\gamma.
$$
Thus, $\alpha\mI\in\ymuSNset\backslash\{\mI\}$.
\end{proof}

We know that the class of averaged nonexpansive operators is closed under the composition operation, that is, the composition of two averaged operators is still averaged nonexpansive \cite{bauschke2017convex}. This property is important for its application in convex optimization. In the following proposition, we prove the closeness of the class of GAN operators with exponent $\gamma$ under the composition operation for $\gamma\geq1$.

\begin{proposition}\label{prop_SNcompos}
Let $\gamma\in[1,+\infty)$ and $\mu_1, \mu_2\in\bR_+$. If $T_1\in\Omega_{\mu_1}^{\gamma}$ and $T_2\in\Omega_{\mu_2}^{\gamma}$, then $T_1\circ T_2\in\Omega_{\mu}^{\gamma}$, where $\mu:=2^{1-\gamma}\min\{\mu_1,\mu_2\}$.
\end{proposition}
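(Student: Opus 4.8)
The plan is to apply the two GAN hypotheses in sequence—first for $T_2$ on the pair $(x,y)$, then for $T_1$ on the image pair—chain the resulting inequalities, and finally reassemble the displacement term $(\mI-T_1\circ T_2)x-(\mI-T_1\circ T_2)y$ out of the two residual terms that the chained inequality already controls.

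First I would fix $x,y\in\bRn$ and abbreviate $u:=T_2x$, $v:=T_2y$. The $\mu_2$-GAN property of $T_2$ applied to $(x,y)$ reads
$$\|u-v\|^\gamma+\mu_2\|(x-u)-(y-v)\|^\gamma\leq\|x-y\|^\gamma,$$
while the $\mu_1$-GAN property of $T_1$ applied to $(u,v)$ reads
$$\|T_1u-T_1v\|^\gamma+\mu_1\|(u-T_1u)-(v-T_1v)\|^\gamma\leq\|u-v\|^\gamma.$$
Substituting the second estimate into the first to eliminate $\|u-v\|^\gamma$ yields a single inequality whose leading term is $\|T_1u-T_1v\|^\gamma=\|(T_1\circ T_2)x-(T_1\circ T_2)y\|^\gamma$, exactly the quantity to be bounded.

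Next I would set $a:=(u-T_1u)-(v-T_1v)$ and $b:=(x-u)-(y-v)$, so the chained inequality becomes $\|(T_1\circ T_2)x-(T_1\circ T_2)y\|^\gamma+\mu_1\|a\|^\gamma+\mu_2\|b\|^\gamma\leq\|x-y\|^\gamma$. The key observation is the algebraic identity $(\mI-T_1\circ T_2)x-(\mI-T_1\circ T_2)y=a+b$, which is obtained by writing $x-T_1u=(x-u)+(u-T_1u)$ and subtracting the analogous expression in $y$. Bounding $\mu_1\|a\|^\gamma+\mu_2\|b\|^\gamma\geq\min\{\mu_1,\mu_2\}(\|a\|^\gamma+\|b\|^\gamma)$ then reduces the whole problem to comparing $\|a\|^\gamma+\|b\|^\gamma$ with $\|a+b\|^\gamma$.

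The one step that genuinely invokes $\gamma\geq1$—and the source of the factor $2^{1-\gamma}$—is the convexity estimate $(s+t)^\gamma\leq2^{\gamma-1}(s^\gamma+t^\gamma)$ for $s,t\geq0$, which follows from convexity of $t\mapsto t^\gamma$ on $[0,\infty)$ when $\gamma\geq1$. Combined with the triangle inequality $\|a+b\|\leq\|a\|+\|b\|$, it gives $\|a+b\|^\gamma\leq2^{\gamma-1}(\|a\|^\gamma+\|b\|^\gamma)$, i.e. $\|a\|^\gamma+\|b\|^\gamma\geq2^{1-\gamma}\|a+b\|^\gamma$. Feeding this back into the chained inequality produces precisely
$$\|(T_1\circ T_2)x-(T_1\circ T_2)y\|^\gamma+2^{1-\gamma}\min\{\mu_1,\mu_2\}\|(\mI-T_1\circ T_2)x-(\mI-T_1\circ T_2)y\|^\gamma\leq\|x-y\|^\gamma,$$
which is the claim. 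I expect the only real subtlety to be stating this convexity inequality cleanly and making sure the hypothesis $\gamma\geq1$ is used exactly there: for $\gamma<1$ the inequality reverses, which explains why the closure-under-composition result is asserted only for $\gamma\geq1$.
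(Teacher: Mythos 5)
Your proposal is correct and is essentially the paper's own proof: your $a$ and $b$ are the paper's $q$ and $p$, the chaining of the two GAN inequalities is the paper's step of adding them, and the convexity bound $(s+t)^\gamma\le 2^{\gamma-1}(s^\gamma+t^\gamma)$ is exactly the inequality the paper cites (Example 8.23 of Bauschke--Combettes). You also correctly identify that this is the sole place where $\gamma\ge 1$ enters.
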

\begin{proof}
For any $x,y\in\bRn$, define
$$
p:=(\mI-T_2)x-(\mI-T_2)y,\ \ q:=(\mI-T_1)(T_2x)-(\mI-T_1)(T_2y).
$$
Then, direct computation leads to
\begin{equation}\label{compSNpqeq}
p+q=(\mI-T_1\circ T_2)x-(\mI-T_1\circ T_2)y.
\end{equation}
Recall from Example 8.23 of \cite{bauschke2017convex} that
\begin{equation}\label{ineqexamp823}
\|p+q\|^\gamma\leq2^{\gamma-1}\left(\|p\|^\gamma+\|q\|^\gamma\right).
\end{equation}
Let $\mu:=2^{1-\gamma}\min\{\mu_1,\mu_2\}$. It follows from \eqref{compSNpqeq} and \eqref{ineqexamp823} that
\begin{equation}\label{ineqmupq}
\mu\|(\mI-T_1\circ T_2)x-(\mI-T_1\circ T_2)y\|^\gamma\leq\mu_2\|p\|^\gamma+\mu_1\|q\|^\gamma.
\end{equation}
By the fact that $T_1\in\Omega_{\mu_1}^{\gamma}$ and $T_2\in\Omega_{\mu_2}^{\gamma}$, we have that
$$
\mu_1\|q\|^\gamma\leq\|T_2x-T_2y\|^\gamma-\|(T_1\circ T_2)x-(T_1\circ T_2)y\|^\gamma
$$
and
$$
\mu_2\|p\|^\gamma\leq\|x-y\|^\gamma-\|T_2x-T_2y\|^\gamma.
$$
Adding the above two inequalities together yields
\begin{equation}\label{mu2qmu1p}
\mu_2\|p\|^\gamma+\mu_1\|q\|^\gamma\leq\|x-y\|^\gamma-\|(T_1\circ T_2)x-(T_1\circ T_2)y\|^\gamma.
\end{equation}
Now combining \eqref{ineqmupq} and \eqref{mu2qmu1p}, we conclude that
$$
\mu\|(\mI-T_1\circ T_2)x-(\mI-T_1\circ T_2)y\|^\gamma\leq\|x-y\|^\gamma-\|(T_1\circ T_2)x-(T_1\circ T_2)y\|^\gamma,
$$
which completes the proof.
\end{proof}

Before closing this section, we illustrate certain geometric properties of nonexpansive, firmly nonexpansive, averaged nonexpansive and contractive operators, and the proposed GAN operators with different exponents. Such geometric properties are useful in guiding us for the convergence analysis of the Picard sequences of these operators. For $x\in\bRn$, $r\in\bR_+$, we define the ball with center $x$ and radius $r$ by
$$
B(x,r):=\{y\in\bRn|\ \|y-x\|\leq r\}.
$$
Let $T\in\Lambda$ and $\hx$ be an arbitrary fixed-point of $T$. As shown in Fig. \ref{fig1}, for a given $x\in\bRn$, ranges of $Tx$ are illustrated by balls with distinct centers and radii for the cases for $T$ being nonexpansive, contractive, firmly nonexpansive and averaged nonexpansive. 

\begin{figure}[H]
\centering
\includegraphics[width=0.8\textwidth]{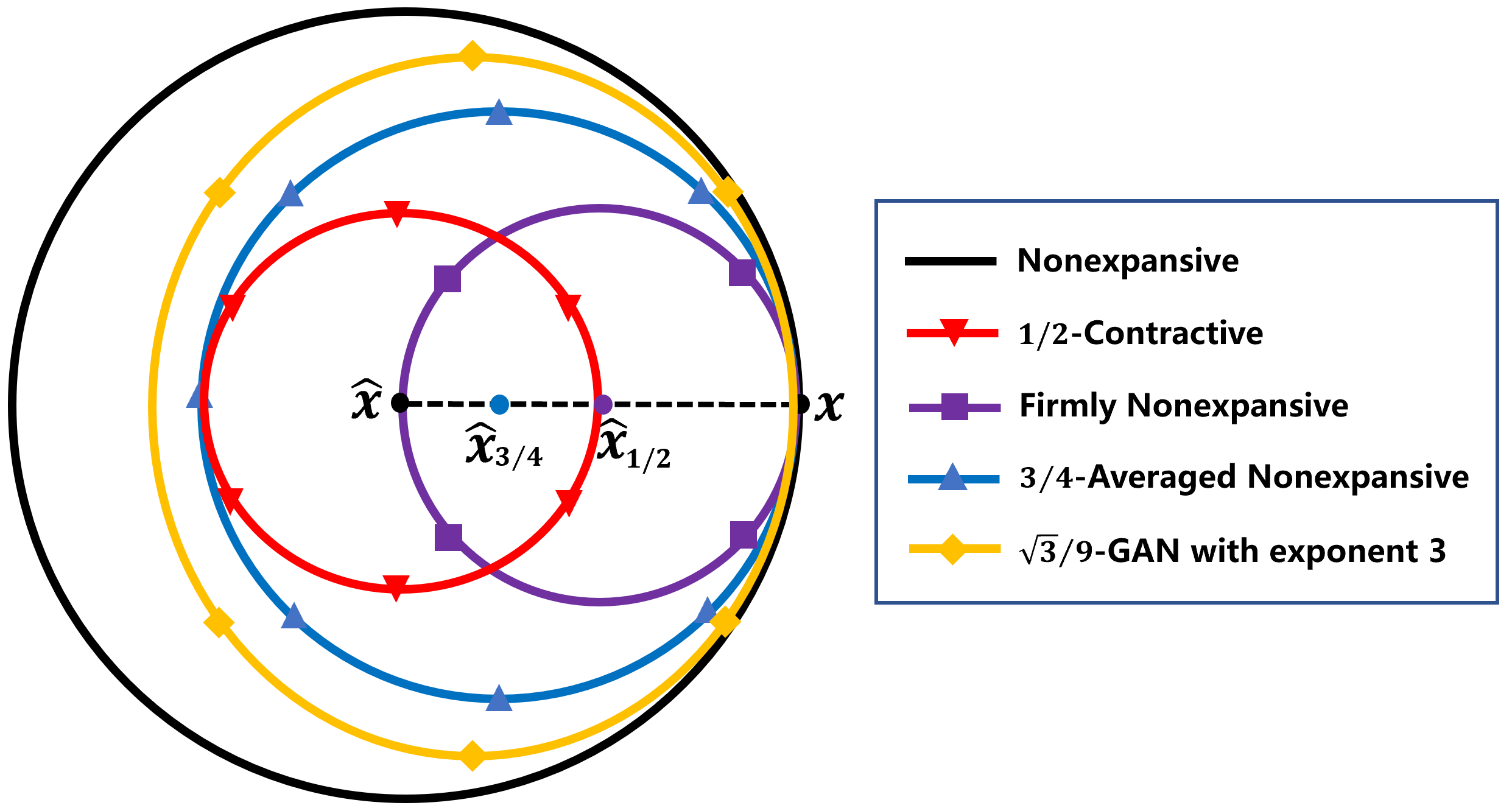}
\caption{The range of $Tx$ for a given $x\in\bR^2$ when $T$ is nonexpansive, contracitve, firmly nonexpansive, averaged nonexpansive or GAN with exponent 3 with respect to $\|\cdot\|_2$: inner region of the circles including the boundaries.}
\label{fig1}
\end{figure}

\begin{figure}[H]
\centering
\includegraphics[width=0.85\textwidth]{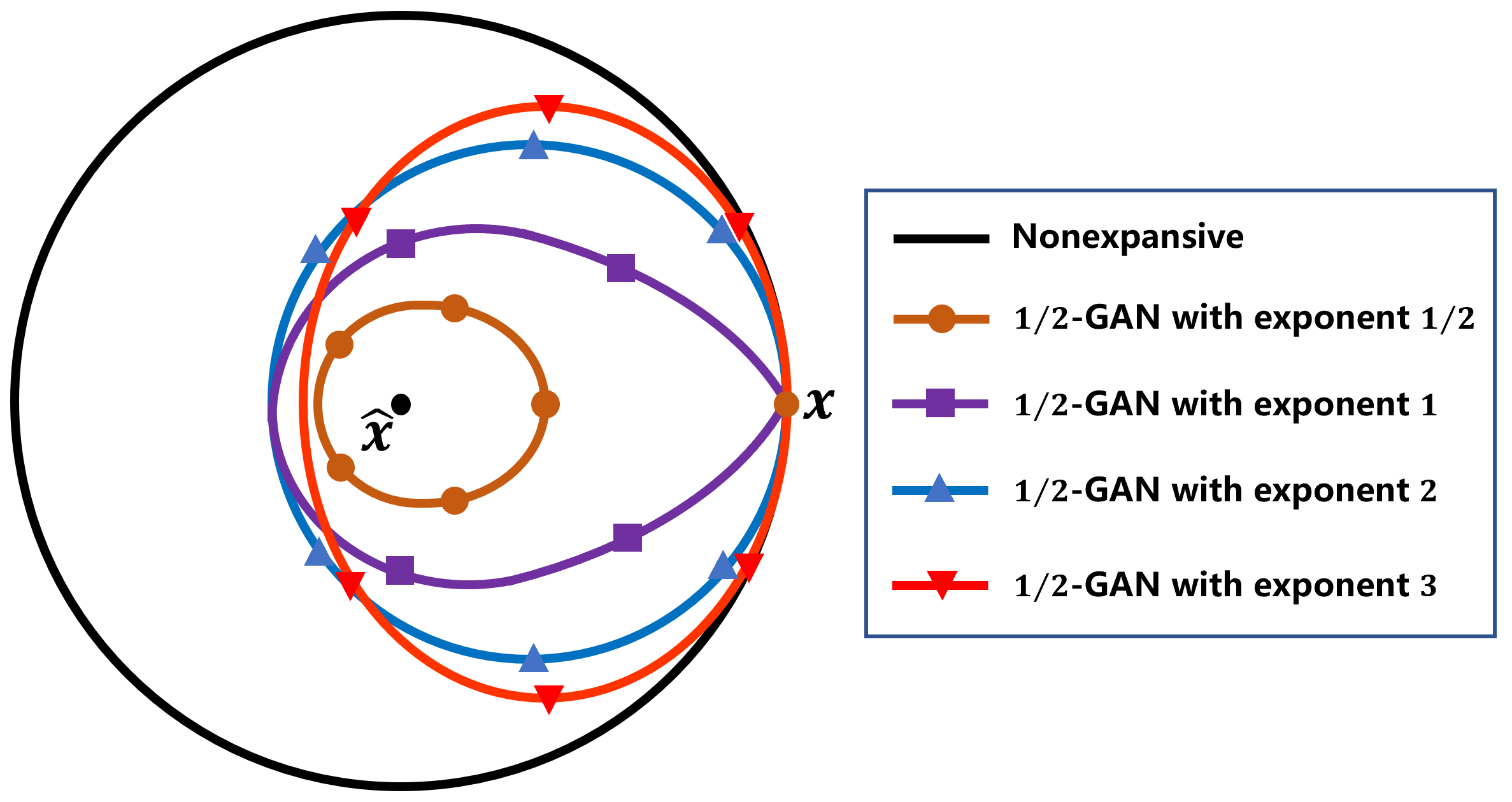}
\caption{The range of $Tx$ for a given $x\in\bR^2$ when $T$ is nonexpansive and $\frac{1}{2}$-GAN with exponent $\frac{1}{2}$, $1$, $2$ and $3$ with respect to $\|\cdot\|_2$: inner region of the closed curves including the boundaries.}
\label{fig2}
\end{figure}

From Fig. \ref{fig1}, we can see that if $T$ is nonexpansive, then $Tx$ may stay on the boundary of $B(\hx,\|x-\hx\|)$ and not be equal to $x$, that is, $x$ is not a fixed-point of $T$ and the distance from $Tx$ to the fixed-point $\hx$ remains the same as that from $x$ to $\hx$. In the same way, for any positive integer $k$, $T^kx$ may always stay on the boundary, which tells that the fixed-point iteration of $T$ may not converges when $T$ is nonexpansive. If $T$ is contractive, the range of $T^{k}x$ will shrink as $k$ increases, which leads to the convergence of $T^kx$ to $\hx$. For the case that $T$ is averaged nonexpansive, the range of $Tx$ is an inscribed ball of $B(\hx,\|x-\hx\|)$ with tangent point $x$, and convergence of the fixed-point iteration of $T$ in this case needs further study (see Theorem \ref{thm_SNconv} for more details).

In Fig. \ref{fig2}, we show the range of $Tx$ for the case that $T$ is $\frac{1}{2}$-GAN with exponent $\frac{1}{2}$, $1$, $2$ or $3$. When the exponent $\gamma$ is equal to 2, the range of $Tx$ is a ball the same as in the case of averaged nonexpansiveness. It is of the egg shape for exponent $3$ and the water-drop shape for exponent 1. We will show that the convergence rate of the fixed-point iteration of $T$ improves as $\gamma$ decreases. Especially, when $\gamma<1$, the range of $T$ has some kind of contractive property (the point $x$ is included), which is called as the FP-contractive property.

\section{Local convergence rate analysis}\label{seclocrate}

In this section, we establish convergence of the fixed-point iteration of a GAN operator. The local convergence rate (the convergence rate of the distance between two consecutive iterates) is also analyzed. We will show that the local convergence rate of the fixed-point iteration of a GAN operator with exponent $\gamma$ is $o(k^{-\frac{1}{\gamma}})$. The smaller the exponent $\gamma$ a GAN operator has, the higher local convergence rate its fixed-point iteration results.

We first describe the fixed-point iteration of an operator. By $\bN_0$ and $\bN_+$ we denote the set of all nonnegative integers and the set of all positive integers, respectively. Given an initial vector $x^{0}\in\bRn$, the fixed-point iteration of $T:\bRn\to\bRn$ is given by
$$
x^{k+1}=Tx^k, \ \ k\in \bN_0.
$$
We call the sequence $\{x^k\}$ generated by the fixed-point iteration of $T$ a Picard sequence of operator $T$.

We begin with stating the main theorem of this section.  For two sequences $\{a_k\}\subset\bR_+\cup\{0\}$ and $\{b_k\}\subset\bR_+$, both tending to zero, if $\lim_{k\to\infty}\frac{a_k}{b_k}=0$, we write $a_k=o(b_k)$. If there exist constants $c>0$ and $K\in\bN_0$ such that $a_k\leq cb_k$ for all $k\geq K$, we write $a_k=O(b_k)$.

\begin{theorem}\label{thm_SNconv}
If $T\in\Lambda$ is GAN with exponent $\gamma\in\bR_+$, then for any initial vector $x^0\in\bRn$, the Picard sequence $\{x^k\}$ of operator $T$ converges to some $x^*\in\FixT$, and
\begin{equation}\label{eq_SNconvrate}
\|x^{k+1}-x^k\|=o\left(k^{-\frac{1}{\gamma}}\right).
\end{equation}
\end{theorem}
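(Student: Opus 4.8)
The plan is to exploit the defining inequality \eqref{neq2_SNdef} with $\hx$ an arbitrary fixed-point to obtain both the convergence of $\{x^k\}$ and the quantitative rate \eqref{eq_SNconvrate}. First I would fix some $\hx\in\FixT$ and apply \eqref{neq2_SNdef} with $x=x^k$, using $Tx^k=x^{k+1}$ and $Tx^k-x^k=x^{k+1}-x^k$. This yields
$$
\|x^{k+1}-\hx\|^\gamma+\mu\|x^{k+1}-x^k\|^\gamma\leq\|x^k-\hx\|^\gamma.
$$
Consequently the nonnegative sequence $\{\|x^k-\hx\|^\gamma\}$ is nonincreasing, hence convergent; this is the Fej\'er-type monotonicity that anchors everything. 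It also shows $\{x^k\}$ is bounded, so it has a convergent subsequence.

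Next I would sum the displayed inequality over $k=0,1,\dots,K-1$. The left-side distances telescope, leaving
$$
\mu\sum_{k=0}^{K-1}\|x^{k+1}-x^k\|^\gamma\leq\|x^0-\hx\|^\gamma-\|x^K-\hx\|^\gamma\leq\|x^0-\hx\|^\gamma.
$$
Letting $K\to\infty$ shows that the series $\sum_k\|x^{k+1}-x^k\|^\gamma$ converges, and in particular $\|x^{k+1}-x^k\|\to0$. For the convergence of the whole sequence I would take a subsequence $x^{k_j}\to x^*$; since $T$ is GAN it is nonexpansive and hence continuous, so $Tx^{k_j}=x^{k_j+1}\to Tx^*$, while $\|x^{k_j+1}-x^{k_j}\|\to0$ forces $Tx^*=x^*$, i.e.\ $x^*\in\FixT$. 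Because $\{\|x^k-x^*\|^\gamma\}$ is itself nonincreasing (apply the Fej\'er property with $\hx=x^*$) and a subsequence tends to $0$, the whole sequence converges to $x^*$.

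The rate \eqref{eq_SNconvrate} is the part requiring the most care, and I expect the main obstacle to lie here rather than in the convergence argument. The key observation is that the sequence $a_k:=\|x^{k+1}-x^k\|^\gamma$ is itself nonincreasing: applying \eqref{neq_SNdef} with the pair $(x^{k},x^{k-1})$ gives $\|x^{k+1}-x^k\|\le\|x^k-x^{k-1}\|$, so $\{a_k\}$ is monotone and summable. For a monotone nonnegative summable sequence one has the elementary estimate $k\,a_{k}\le\sum_{j=\lceil k/2\rceil}^{k}a_j\cdot\text{(const)}$, and since the tail $\sum_{j\ge\lceil k/2\rceil}a_j\to0$, this forces $a_k=o(1/k)$, i.e.
$$
\|x^{k+1}-x^k\|^\gamma=o\!\left(k^{-1}\right).
$$
Raising to the power $1/\gamma$ gives $\|x^{k+1}-x^k\|=o(k^{-1/\gamma})$, which is precisely \eqref{eq_SNconvrate}. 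The delicate point is justifying the little-$o$ (not merely $O$) rate: it comes from combining monotonicity of $a_k$ with the convergence of the tail of the series, so that both $k\,a_k$ and the partial tails vanish simultaneously. I would state and use a short lemma to the effect that a nonincreasing nonnegative summable sequence $\{a_k\}$ satisfies $a_k=o(1/k)$, which isolates this estimate cleanly from the fixed-point-specific reasoning.
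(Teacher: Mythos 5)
Your proposal is correct and follows essentially the same route as the paper: the telescoping Fej\'er-type inequality for summability of $\|x^{k+1}-x^k\|^\gamma$, convergence via nonexpansiveness plus vanishing increments, and the $o(1/k)$ estimate for a monotone nonnegative summable sequence. The only difference is cosmetic --- you prove inline what the paper delegates to Proposition 5.28 of Bauschke--Combettes and to Lemma 3 of Davis--Yin (the latter applied with constant $a_k=\mu$), and both of your inline arguments are sound.
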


We now proceed to prove Theorem \ref{thm_SNconv}. To this end, we recall Proposition 5.28 of \cite{bauschke2017convex} as a lemma. 

\begin{lemma}\label{lemma_genFPconv}
Let $T\in\Lambda$ be a nonexpansive operator. For any initial vector $x^0\in\bRn$, if the Picard sequence $\{x^k\}$ of $T$ satisfies that $\lim_{k\to\infty}\|x^{k+1}-x^{k}\|=0$, then $\{x^k\}$ converges to a fixed-point of $T$.
\end{lemma}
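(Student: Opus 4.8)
The plan is to combine Fej\'er monotonicity with the finite dimensionality of $\bRn$, exploiting that nonexpansive operators are Lipschitz continuous. First I would fix an arbitrary $\hx\in\FixT$ (which exists because $T\in\Lambda$) and use the nonexpansiveness of $T$ together with $\hx=T\hx$ to show that the scalar sequence $\{\|x^k-\hx\|\}$ is nonincreasing. Indeed, for every $k\in\bN_0$,
$$
\|x^{k+1}-\hx\|=\|Tx^k-T\hx\|\leq\|x^k-\hx\|.
$$
Hence $\{\|x^k-\hx\|\}$ is nonincreasing and bounded below by $0$, so it converges; in particular $\{x^k\}$ is bounded by $\|x^0-\hx\|$ around $\hx$.

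Next, since $\{x^k\}$ is a bounded sequence in $\bRn$, the Bolzano--Weierstrass theorem furnishes a subsequence $\{x^{k_j}\}$ converging to some $x^*\in\bRn$. I would then verify that $x^*\in\FixT$. Using the triangle inequality, the nonexpansiveness of $T$ in the form $\|Tx^*-Tx^{k_j}\|\leq\|x^*-x^{k_j}\|$, and the iteration relation $x^{k_j+1}=Tx^{k_j}$, I would estimate
$$
\|Tx^*-x^*\|\leq\|Tx^*-x^{k_j+1}\|+\|x^{k_j+1}-x^{k_j}\|+\|x^{k_j}-x^*\|\leq2\|x^{k_j}-x^*\|+\|x^{k_j+1}-x^{k_j}\|.
$$
Letting $j\to\infty$ and invoking both the subsequential convergence $x^{k_j}\to x^*$ and the standing hypothesis $\lim_{k\to\infty}\|x^{k+1}-x^k\|=0$, the right-hand side tends to $0$, so $Tx^*=x^*$, that is, $x^*\in\FixT$.

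Finally, I would upgrade subsequential convergence to convergence of the entire Picard sequence. Applying the Fej\'er monotonicity established in the first step with the specific fixed-point $\hx:=x^*$, the sequence $\{\|x^k-x^*\|\}$ converges to some limit. Since its subsequence $\{\|x^{k_j}-x^*\|\}$ converges to $0$, that limit must be $0$, whence $\|x^k-x^*\|\to0$ and therefore $x^k\to x^*$. The step that carries the argument is the demonstration that the cluster point $x^*$ produced by Bolzano--Weierstrass actually lies in $\FixT$: a priori it is only a cluster point, and it is precisely its membership in $\FixT$ that lets Fej\'er monotonicity be re-invoked with respect to it, thereby promoting a single convergent subsequence into convergence of the whole sequence. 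In finite dimensions this is routine, but I would emphasize that the same scheme in a general Hilbert space would require weak sequential compactness and a demiclosedness principle in place of Bolzano--Weierstrass and plain continuity.
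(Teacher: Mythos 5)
Your proof is correct: the paper itself gives no proof of this lemma, citing it as Proposition 5.28 of Bauschke--Combettes, and your argument (Fej\'er monotonicity of $\{\|x^k-\hx\|\}$, Bolzano--Weierstrass to extract a cluster point $x^*$, the estimate $\|Tx^*-x^*\|\leq 2\|x^{k_j}-x^*\|+\|x^{k_j+1}-x^{k_j}\|$ to place $x^*$ in $\FixT$, then Fej\'er monotonicity re-invoked at $x^*$) is precisely the standard finite-dimensional proof of that cited result. Your closing remark is also apt: in a general Hilbert space the Bolzano--Weierstrass and continuity steps would indeed be replaced by weak compactness and the demiclosedness of $\mI-T$, which is how the cited reference proves it.
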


We also need Lemma 3 of \cite{davis2016convergence}, which we state below.

\begin{lemma}\label{lemma_SNconv}
Suppose that $\{a_k\}$ and $\{b_k\}$ are two nonnegative sequences in $\bR$. If $\sum_{k=0}^{\infty}a_kb_k<\infty$, $\{b_k\}$ is monotonically decreasing, and there exists $\varepsilon>0$ such that $a_k\geq\varepsilon$ for all $k\in\bN_0$, then $b_k=o\left(\frac{1}{k}\right)$.
\end{lemma}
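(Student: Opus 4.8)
The plan is to first strip the hypotheses down to their essential content and then invoke a classical tail estimate for monotone summable sequences. The first step is to observe that the roles of $\{a_k\}$ and of the bound $a_k\geq\varepsilon$ are merely to guarantee summability of $\{b_k\}$. Indeed, from $a_k\geq\varepsilon>0$ and the nonnegativity of $b_k$ we get $\varepsilon b_k\leq a_k b_k$ for every $k$, so that
$$
\varepsilon\sum_{k=0}^{\infty}b_k\leq\sum_{k=0}^{\infty}a_k b_k<\infty,
$$
whence $\sum_{k=0}^{\infty}b_k<\infty$. Thus the problem reduces to the purely sequence-theoretic claim that a nonnegative, monotonically decreasing, summable sequence $\{b_k\}$ satisfies $kb_k\to 0$, which is exactly $b_k=o(1/k)$.

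The second step establishes this reduced claim by exploiting monotonicity to bound a block of terms from below. I would fix $\delta>0$; since $\sum_k b_k<\infty$, the tail of the series is small, so there exists $N\in\bN_0$ with $\sum_{j=N+1}^{\infty}b_j<\delta$. Then for any $k>2N$ I set $n:=\lfloor k/2\rfloor$, so that $n\geq N$ and the block of indices $j\in\{n+1,\dots,k\}$ has cardinality $k-n=\lceil k/2\rceil\geq k/2$ and lies entirely beyond $N$. Because $\{b_j\}$ is decreasing, every term of this block is at least $b_k$, so
$$
\frac{k}{2}\,b_k\leq(k-n)\,b_k\leq\sum_{j=n+1}^{k}b_j\leq\sum_{j=N+1}^{\infty}b_j<\delta.
$$
Hence $kb_k<2\delta$ for all $k>2N$, and since $\delta>0$ was arbitrary, $\lim_{k\to\infty}kb_k=0$, which is the assertion $b_k=o(1/k)$.

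The argument is short, and its only delicate point—what I would flag as a step requiring care rather than a genuine obstacle—is the index bookkeeping in the lower bound: one must choose the summation block $\{n+1,\dots,k\}$ so that it simultaneously (i) has length comparable to $k$ (here $\geq k/2$, supplying the factor of $k$ needed to conclude $kb_k\to0$) and (ii) starts past the threshold $N$, so that its sum is controlled by the arbitrarily small tail $\sum_{j>N}b_j$ rather than by the full (merely finite) series. The choice $n=\lfloor k/2\rfloor$ for $k>2N$ achieves both at once, and the monotonicity of $\{b_k\}$ is precisely what converts the length of the block into the lower bound $(k-n)b_k$.
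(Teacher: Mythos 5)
Your proof is correct, and the index bookkeeping you flag (choosing the block $\{n+1,\dots,k\}$ with $n=\lfloor k/2\rfloor$ for $k>2N$) checks out. Note that the paper itself gives no proof of this lemma---it quotes it as Lemma~3 of the cited Davis--Yin reference---and your argument (reduce to summability of $\{b_k\}$ via $a_k\geq\varepsilon$, then the classical tail-block estimate $\frac{k}{2}\,b_k\leq\sum_{j=\lfloor k/2\rfloor+1}^{k}b_j$ for a nonnegative decreasing summable sequence) is exactly the standard route by which that cited result is established, so there is nothing to contrast.
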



\begin{proof}[Proof of Theorem \ref{thm_SNconv}]
We first show convergence of the sequence $\{x^k\}$. Since $T$ is GAN with exponent $\gamma$, we know that it is nonexpansive and there exists $\mu\in\bR_+$ such that for any $\hx\in\FixT$,
\begin{equation}\label{SNconv_neq1}
\|x^{k+1}-\hx\|^\gamma+\mu\|x^{k+1}-x^k\|^\gamma\leq\|x^k-\hx\|^\gamma.
\end{equation}
For any positive integer $K$, summing both sides of the inequality \eqref{SNconv_neq1} for $k=0,1,\ldots,K$ yields that
\begin{equation}\label{SNconv_neq2}
\sum_{k=0}^{K}\mu\|x^{k+1}-x^k\|^\gamma\leq\|x^0-\hx\|^\gamma-\|x^{K+1}-\hat{x}\|^\gamma\leq\|x^0-\hat{x}\|^\gamma.
\end{equation}
Inequality \eqref{SNconv_neq2} ensures that series 
\begin{equation}\label{ConVS}
\sum_{k=0}^\infty\mu\|x^{k+1}-x^k\|^\gamma<\infty.
\end{equation}
Result \eqref{ConVS} implies that $\lim_{k\to\infty}\|x^{k+1}-x^k\|=0$. By Lemma \ref{lemma_genFPconv}, we conclude that $\{x^k\}_{k\in\bN_0}$ converges to some $x^*\in\FixT$.

We next employ Lemma \ref{lemma_SNconv} to show that \eqref{eq_SNconvrate} holds. 
Applying Lemma \ref{lemma_SNconv} to the sequences  $a_k:=\mu$ and $b_k:=\|x^{k+1}-x^k\|^\gamma$, $k\in\bN_0$, it suffices to show that $\{\|x^{k+1}-x^k\|\}$ is monotonically decreasing. This follows from the nonexpansiveness of $T$ since it implies that
$$
\|x^{k+2}-x^{k+1}\|=\|Tx^{k+1}-Tx^k\|\leq\|x^{k+1}-x^k\|
$$
for all $k\in\bN_0$. Therefore,  by Lemma \ref{lemma_SNconv}, \eqref{eq_SNconvrate} holds.
\end{proof}

Since an averaged nonexpansive operator is GAN with exponent $\gamma=2$, Theorem \ref{thm_SNconv} covers the well-known result that the local convergence rate of the fixed-point iteration of an averaged nonexpansive operator is $o(k^{-\frac{1}{2}})$, see \cite{baillion1996rate}. Moreover, it ensures that the local convergence rate of the fixed-point iteration of a GAN operator with exponent $\gamma<2$ is higher than that of an averaged nonexpansive operator.

\section{Global convergence rate analysis}\label{secglobrate}
%
We consider in this section the global convergence rate (the convergence rate in terms of the distance between an iterate and a fixed-point) of the fixed-point iteration of GAN operator and investigate the relation between the local convergence rate and the global convergence rate. We will show that the fixed-point iteration of a GAN operator with exponent $\gamma\in(0,1)$ can achieve an exponential global convergence rate. Moreover, if a GAN operator is also H$\ddot{\text{o}}$lder regular, the global convergence rate of its fixed-point iteration will depend on both the exponent of generalized averaged nonexpansiveness and the exponent of H$\ddot{\text{o}}$lder regularity. The definition of H$\ddot{\text{o}}$lder regularity will be given later in this section.

We first establish a relation between the local convergence rate and the global convergence rate.

\begin{theorem}\label{thm_rellocglob}
If $T\in\Omega_{\mu}^{1}$ for $\mu\in(0,1]$, then for any initial vector $x^0\in\bRn$, the Picard sequence $\{x^k\}$ of $T$ converges to some $x^*\in\FixT$, and there holds the equivalence relation for all positive integers $k$,
\begin{equation}\label{equivDCIDIS1}
\mu\sum_{j=k}^\infty\|x^{j+1}-x^{j}\|\leq\|x^k-x^*\|\leq\sum_{j=k}^\infty\|x^{j+1}-x^{j}\|.
\end{equation}
\end{theorem}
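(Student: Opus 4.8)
The plan is to get convergence for free from the general theory already established, and then to derive the two-sided estimate by exploiting that the exponent is exactly $1$, which linearizes the defining inequality and makes both bounds hinge on the same quantity $\sum_j \|x^{j+1}-x^j\|$. First I would invoke Theorem \ref{thm_SNconv} with $\gamma=1$: since $T\in\Omega_{\mu}^{1}$ is in particular GAN with exponent $1$, its Picard sequence $\{x^k\}$ converges to some $x^*\in\FixT$, and moreover the series $\sum_{k}\|x^{k+1}-x^k\|$ is finite (this is exactly \eqref{ConVS} read off with $\gamma=1$). This disposes of the convergence claim and guarantees that both infinite sums in \eqref{equivDCIDIS1} are well-defined, so the remaining work is purely the inequality.

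For the right-hand inequality I would use only the triangle inequality. For any integer $m>k$, writing $x^k-x^m$ as a telescoping sum of consecutive differences gives $\|x^k-x^m\|\le\sum_{j=k}^{m-1}\|x^{j+1}-x^j\|\le\sum_{j=k}^{\infty}\|x^{j+1}-x^j\|$. Letting $m\to\infty$ and using $x^m\to x^*$ together with continuity of the norm yields $\|x^k-x^*\|\le\sum_{j=k}^{\infty}\|x^{j+1}-x^j\|$.

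For the left-hand inequality I would apply the defining inequality \eqref{neq2_SNdef} with exponent $\gamma=1$ and $\hx=x^*$ along the Picard sequence, namely take $x=x^j$ so that $Tx=x^{j+1}$. This gives $\mu\|x^{j+1}-x^j\|\le\|x^j-x^*\|-\|x^{j+1}-x^*\|$ for every $j$. Summing over $j=k,\dots,m-1$, the right-hand side telescopes to $\|x^k-x^*\|-\|x^m-x^*\|\le\|x^k-x^*\|$; letting $m\to\infty$ delivers $\mu\sum_{j=k}^{\infty}\|x^{j+1}-x^j\|\le\|x^k-x^*\|$, completing the equivalence.

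There is no serious obstacle here; the one point requiring care is that the argument genuinely relies on $\gamma=1$. It is precisely the linear exponent that turns the GAN inequality into a relation among first powers of the consecutive distances, so that the telescoping sum (lower bound) and the triangle inequality (upper bound) both align with $\sum_{j}\|x^{j+1}-x^j\|$; for $\gamma\neq1$ the powers would not match and this clean equivalence would break. I would also remark that the hypothesis $\mu\in(0,1]$ is consistent with Proposition \ref{prop_nontrivial}$(i)$, which ensures $\Omega_{\mu}^{1}$ contains nontrivial operators, so the statement is not vacuous.
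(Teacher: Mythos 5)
Your proof is correct and follows essentially the same route as the paper's: convergence and summability of $\sum_j\|x^{j+1}-x^j\|$ from Theorem \ref{thm_SNconv}, the upper bound by telescoping the triangle inequality and letting the tail $\|x^{N+1}-x^*\|\to 0$, and the lower bound by summing the $\gamma=1$ GAN inequality \eqref{neq2_SNdef} along the Picard sequence and telescoping. No gaps; the remark about $\mu\in(0,1]$ and Proposition \ref{prop_nontrivial}$(i)$ is a harmless extra observation not needed for the argument.
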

\begin{proof}
Convergence of $\{x^k\}$ to some $x^*\in\FixT$ follows from Theorem \ref{thm_SNconv}. 

It remains to establish the equivalence relation \eqref{equivDCIDIS1}. Since $T\in\Omega_{\mu}^{1}$, by the definition of generalized averaged nonexpansiveness, we have that
$$
\mu\|x^{j+1}-x^{j}\|\leq\|x^j-x^*\|-\|x^{j+1}-x^*\|,\ \ j\in\bN_0.
$$
For any $N>k$, summing the above inequality for $j=k,k+1,\ldots,N$ yields that
\begin{equation}\label{UUper-bound}
    \mu\sum_{j=k}^N\|x^{j+1}-x^{j}\|\leq\|x^k-x^*\|-\|x^{N+1}-x^*\|\leq\|x^k-x^*\|.
\end{equation}
In the inequality above, we let $N\to \infty$ and get the left inequality of \eqref{equivDCIDIS1}.

To establish the right inequality of \eqref{equivDCIDIS1}, for any $N>k$, we repeatedly use the triangle inequality and obtain that
\begin{equation}\label{RUper-bound}
\|x^k-x^*\|\leq\sum_{j=k}^N\|x^j-x^{j+1}\|+\|x^{N+1}-x^*\|.
\end{equation}
Inequality \eqref{UUper-bound} implies that 
$$
\sum_{j=k}^\infty\|x^{j+1}-x^{j}\|<\infty.
$$
Moreover, the first part of this theorem ensures that 
$$
\lim_{N\to\infty}\|x^{N+1}-x^*\|=0.
$$
Hence,
letting $N\to \infty$ in inequality \eqref{RUper-bound} yields the right inequality of \eqref{equivDCIDIS1}.
\end{proof}

Theorem \ref{thm_rellocglob} indicates that when the operator $T$ is GAN with exponent 1, the global convergence rate of its Picard sequence is equivalent to the convergence rate of $\sum_{j=k}^\infty\|x^{j+1}-x^{j}\|$. We next show how Theorem \ref{thm_rellocglob} provides a way to estimate the global convergence rate. We first show a technical result.

\begin{proposition}\label{prop_ratemin1}
If $\{a_k\}\subset\bR$ is a nonnegative sequence with $a_k=o\left(k^{-\alpha}\right)$, then $\sum_{j=k}^{\infty}a_j=o\left(k^{-(\alpha-1)}\right)$.
\end{proposition}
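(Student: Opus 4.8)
The plan is to reduce the statement to a standard tail estimate for the $p$-series and then exploit that $\alpha>1$, which is implicit here: for the target rate $k^{-(\alpha-1)}$ to tend to zero (as the paper's $o$-notation requires), we must have $\alpha-1>0$.

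First I would unpack the hypothesis $a_k=o(k^{-\alpha})$ through its definition: for every $\varepsilon>0$ there is an index $K\in\bN_0$ such that $a_j\leq\varepsilon j^{-\alpha}$ for all $j\geq K$. Summing from $j=k\geq K$ then gives $\sum_{j=k}^\infty a_j\leq\varepsilon\sum_{j=k}^\infty j^{-\alpha}$, so the entire problem is to control the tail $\sum_{j=k}^\infty j^{-\alpha}$ by a fixed multiple of $k^{-(\alpha-1)}=k^{1-\alpha}$.

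Next I would bound this tail by comparison with an integral. Since $x\mapsto x^{-\alpha}$ is positive and strictly decreasing on $[1,\infty)$, monotonicity yields $j^{-\alpha}\leq\int_{j-1}^{j}x^{-\alpha}\,dx$ for each $j\geq 2$, and hence, after enlarging $K$ so that $K\geq 2$,
$$
\sum_{j=k}^\infty j^{-\alpha}\leq\int_{k-1}^\infty x^{-\alpha}\,dx=\frac{(k-1)^{1-\alpha}}{\alpha-1},
$$
where convergence of the integral uses $\alpha>1$. Combining the two displays gives $\sum_{j=k}^\infty a_j\leq\frac{\varepsilon}{\alpha-1}(k-1)^{1-\alpha}$ for all $k\geq K$.

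Finally I would divide by $k^{1-\alpha}$ and pass to the limit. The ratio $(k-1)^{1-\alpha}/k^{1-\alpha}=(1-1/k)^{1-\alpha}$ tends to $1$ as $k\to\infty$, so it is bounded (say by $2$) for all large $k$; therefore $k^{\alpha-1}\sum_{j=k}^\infty a_j\leq\frac{2\varepsilon}{\alpha-1}$ eventually. Since $\varepsilon>0$ is arbitrary, $\limsup_{k\to\infty}k^{\alpha-1}\sum_{j=k}^\infty a_j=0$, which is exactly $\sum_{j=k}^\infty a_j=o(k^{-(\alpha-1)})$. The only mildly delicate point — the main obstacle — is the bookkeeping that keeps the comparison constant $\frac{1}{\alpha-1}$ (and the $(1-1/k)^{1-\alpha}$ factor) independent of $\varepsilon$, so that letting $\varepsilon\to 0$ genuinely forces the $o$-relation rather than merely an $O$-relation; this is guaranteed precisely because the integral comparison does not involve $\varepsilon$ at all.
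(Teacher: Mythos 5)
Your proof is correct and follows essentially the same route as the paper's: unpack the $o(k^{-\alpha})$ hypothesis with an $\varepsilon$--$K$ argument, bound the tail $\sum_{j\geq k}j^{-\alpha}$ by the integral $\int_{k-1}^{\infty}t^{-\alpha}\,dt=\frac{(k-1)^{1-\alpha}}{\alpha-1}$, and conclude. Your extra care in handling the $(k-1)^{1-\alpha}$ versus $k^{1-\alpha}$ factor is a small refinement the paper glosses over, but the argument is the same.
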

\begin{proof}
Since $a_k=o\left(k^{-\alpha}\right)$, for any $\varepsilon>0$, there is $K\in\bN_0$ such that $a_j<\frac{\varepsilon}{j^\alpha}$ for all $j\geq K$. Summing this inequality for $j=k, k+1, \dots,$ with $k\geq K$, we obtain that
$$
\sum_{j=k}^{\infty}a_j< \varepsilon\sum_{j=k}^{\infty}\frac{1}{j^{\alpha}}\leq \varepsilon\int_{k-1}^\infty\frac{1}{t^{\alpha}}dt=\frac{\varepsilon}{(\alpha-1)(k-1)^{\alpha-1}}.
$$
This establishes the desired estimate.
\end{proof}

Theorem \ref{thm_rellocglob} together with Propositions \ref{prop_SNinclrel} and \ref{prop_ratemin1} leads to the following theorem.

\begin{theorem}\label{convrateorder1}
If $T\in\Lambda$ is GAN with exponent $\gamma\in(0,1)$, then for any initial vector $x^0\in\bRn$, the Picard sequence $\{x^k\}$ of $T$ converges to some $x^*\in\FixT$, and $\|x^k-x^*\|=o\left(k^{-\frac{1-\gamma}{\gamma}}\right)$.
\end{theorem}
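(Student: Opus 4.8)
The plan is to chain together three results already established in the excerpt. First I would invoke Theorem~\ref{thm_SNconv}: since $T$ is GAN with exponent $\gamma\in(0,1)\subset\bR_+$, that theorem immediately gives both the convergence of the Picard sequence $\{x^k\}$ to some $x^*\in\FixT$ (which settles the convergence claim) and the local rate
\begin{equation*}
\|x^{k+1}-x^k\|=o\left(k^{-\frac{1}{\gamma}}\right).
\end{equation*}

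The bridge from the local rate to the global rate is Theorem~\ref{thm_rellocglob}, but that result requires membership in $\Omega_{\mu}^{1}$ with $\mu\in(0,1]$, i.e.\ generalized averaged nonexpansiveness with exponent exactly $1$ and coefficient at most $1$. So the next step is to promote $T$ from exponent $\gamma$ to exponent $1$. Since $\gamma<1$, Proposition~\ref{prop_SNinclrel} (applied with $\gamma_1:=\gamma$ and $\gamma_2:=1$) shows $T$ is GAN with exponent $1$, that is, $T\in\Omega_{\mu}^{1}$ for some $\mu\in\bR_+$. To meet the hypothesis $\mu\le 1$, I would invoke the monotonicity remark following Definition~\ref{def_SN}: being $\mu$-GAN implies being $\mu'$-GAN for every $\mu'\le\mu$, so I may replace $\mu$ by $\min\{\mu,1\}\in(0,1]$ without loss. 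With $T\in\Omega_{\mu}^{1}$ and $\mu\in(0,1]$ secured, the right inequality of \eqref{equivDCIDIS1} in Theorem~\ref{thm_rellocglob} gives
\begin{equation*}
\|x^k-x^*\|\leq\sum_{j=k}^{\infty}\|x^{j+1}-x^{j}\|,
\end{equation*}
reducing the global estimate to bounding the tail sum of the step sizes.

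Finally, I would apply Proposition~\ref{prop_ratemin1} with $a_j:=\|x^{j+1}-x^j\|$ and $\alpha:=\frac{1}{\gamma}$. Because $\gamma\in(0,1)$ we have $\alpha>1$, which is exactly the regime in which the tail sum converges and the proposition applies; it then yields $\sum_{j=k}^{\infty}\|x^{j+1}-x^j\|=o\left(k^{-(\alpha-1)}\right)=o\left(k^{-\frac{1-\gamma}{\gamma}}\right)$. Combining this with the tail-sum bound above produces $\|x^k-x^*\|=o\left(k^{-\frac{1-\gamma}{\gamma}}\right)$, which is the claimed global rate.

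I expect no serious obstacle here, since the theorem is essentially an assembly of three prior results. The only points deserving care are the bookkeeping on the coefficient $\mu$---ensuring the exponent-raising step of Proposition~\ref{prop_SNinclrel} lands in the range $\mu\in(0,1]$ demanded by Theorem~\ref{thm_rellocglob}---and checking that $\alpha=\frac{1}{\gamma}>1$ so that Proposition~\ref{prop_ratemin1} is legitimately applicable. The genuine mathematical content resides in the three results being linked, not in this final deduction.
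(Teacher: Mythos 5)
Your proposal is correct and follows essentially the same route as the paper's own proof: Theorem~\ref{thm_SNconv} for convergence and the local rate, Proposition~\ref{prop_SNinclrel} to pass to exponent $1$, Theorem~\ref{thm_rellocglob} to bound $\|x^k-x^*\|$ by the tail sum, and Proposition~\ref{prop_ratemin1} to estimate that tail sum. Your extra bookkeeping on replacing $\mu$ by $\min\{\mu,1\}$ (via the monotonicity remark after Definition~\ref{def_SN}) is a point the paper silently skips, so it is a welcome addition rather than a deviation.
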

\begin{proof}
It follows from Theorem \ref{thm_SNconv} that $\{x^k\}$ convergence to some $x^*\in\FixT$ and $\|x^{k+1}-x^k\|=o\left(k^{-\frac{1}{\gamma}}\right)$. Applying  Proposition \ref{prop_ratemin1} with $a_k:=\|x^{k+1}-x^k\|$, we obtain that 
$$
\sum_{j=k}^{\infty}\|x^{j+1}-x^j\|=o\left(k^{-\frac{1-\gamma}{\gamma}}\right).
$$
Moreover, by Proposition \ref{prop_SNinclrel}, we see that $T$ is GAN with exponent 1. Thus, the desired result of this theorem follows from Theorem \ref{thm_rellocglob}.
\end{proof}

In fact, according to Proposition \ref{prop_SNsubQC}, we know that GAN operator with exponent $\gamma\in(0,1)$ is FP-$\rho$-contractive for some $\rho\in(0,1)$, which leads to higher order global convergence rate of its Picard sequence than the result shown in Theorem \ref{convrateorder1}. To this end, we first show that the Picard sequence of a FP-contractive operator has exponential global convergence rate.

\begin{theorem}\label{lemma_QCconv}
If operator $T\in\Lambda$ is FP-contractive, then for any initial vector $x^0\in\bRn$, the Picard sequence $\{x^k\}$ of $T$ either converges to some $x^*\in\FixT$ within a finite number of iterations or there exists $\rho\in(0,1)$ such that 
\begin{equation}\label{GCONVR}
    \|x^k-x^*\|\leq\rho^k\|x^0-x^*\|,\ \ \mbox{for all}\ \ k\in\bN_0.
\end{equation}
\end{theorem}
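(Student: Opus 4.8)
The plan is to leverage the FP-contractivity definition \eqref{neq_QCdef} directly, observing that the Picard iteration of $T$ produces iterates $x^{k+1}=Tx^k$, and that any fixed-point $x^*\in\FixT$ is preserved under $T$. First I would pick an arbitrary $x^*\in\FixT$ (whose existence is guaranteed since $T\in\Lambda$) and consider two exhaustive cases. The clean case is when some iterate $x^{k_0}$ already lands in $\FixT$: then $x^{k_0}=Tx^{k_0-1}$ is a fixed-point, so $x^{k}=x^{k_0}$ for all $k\geq k_0$, and the sequence converges to this fixed-point in finitely many iterations. This is the first alternative in the statement.

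The substantive case is when $x^k\in\bRn\backslash\FixT$ for every $k\in\bN_0$. Here I would apply the FP-contractivity inequality \eqref{neq_QCdef} with $x:=x^k$ and $\hx:=x^*$, which is legitimate precisely because $x^k\notin\FixT$. Since $Tx^k=x^{k+1}$, this gives $\|x^{k+1}-x^*\|\leq\rho\|x^k-x^*\|$ for the fixed $\rho\in(0,1)$ coming from the definition. A routine induction on $k$ then yields \eqref{GCONVR}, namely $\|x^k-x^*\|\leq\rho^k\|x^0-x^*\|$. The key point enabling the induction is that the same $\rho$ works uniformly across all iterates, which is exactly what \eqref{neq_QCdef} provides, and that each $x^k$ remains outside $\FixT$ so the inequality applies at every step.

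The main subtlety, and the only place requiring care, is the logical structure of the dichotomy: I must verify that these two cases are genuinely exhaustive and that the finite-termination case is handled correctly. Specifically, once any $x^{k_0}\in\FixT$, all subsequent iterates coincide with $x^{k_0}$, so convergence is immediate and \eqref{GCONVR} need not be claimed. Conversely, if no iterate ever enters $\FixT$, the inequality \eqref{neq_QCdef} is applicable at every index and the geometric bound follows. I expect the induction and the algebra to be entirely routine; the genuine obstacle is merely ensuring the case split is stated cleanly so that the FP-contractivity inequality is only invoked at points where it is valid (i.e., at iterates not belonging to $\FixT$), since \eqref{neq_QCdef} is asserted only for $x\in\bRn\backslash\FixT$.
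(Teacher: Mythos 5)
Your proposal is correct and follows essentially the same route as the paper's proof: the same dichotomy between an iterate landing in $\FixT$ (after which the sequence is constant) and the case where every iterate stays outside $\FixT$, in which the FP-contractivity inequality applies at each step and iterating it gives the geometric bound \eqref{GCONVR}. Your extra care about only invoking \eqref{neq_QCdef} at points outside $\FixT$ is exactly the right observation and matches the paper's case split.
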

\begin{proof}
If there exists an integer $K\in\bN_0$ such that $x^K\in\FixT$, then $x^k=x^K$ for all $k>K$, and hence $\lim_{k\to\infty}x^k=x^K$. Otherwise, $x^k\notin\FixT$ for all $k\in\bN_0$. In this case, by the definition of the FP-contractive operator, there exist $x^*\in\FixT$ and $\rho\in(0,1)$ such that 
$$
\|x^{k+1}-x^*\|\leq\rho\|x^k-x^*\|,\ \  \mbox{for all}\ \ k\in\bN_0.
$$
Repeatedly using this inequality, we obtain the desired estimate \eqref{GCONVR}.
\end{proof}

The next corollary improves the global convergence rate given in Theorem \ref{convrateorder1}.
\begin{corollary}\label{thm_SNconv1}
If operator $T\in\Lambda$ is GAN with exponent $\gamma\in(0,1)$, then for any initial vector $x^0\in\bRn$, the Picard sequence $\{x^k\}$ of $T$ either converges to some $x^*\in\FixT$ within finite iterations or there exists some $\rho\in(0,1)$ such that estimate \eqref{GCONVR} holds.
\end{corollary}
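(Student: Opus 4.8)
The plan is to obtain this corollary as an immediate consequence of the two results that precede it, namely Proposition \ref{prop_SNsubQC} and Theorem \ref{lemma_QCconv}. The key observation is that the hypothesis here—being GAN with exponent $\gamma\in(0,1)$—is exactly the hypothesis of Proposition \ref{prop_SNsubQC}, while the conclusion we want—the dichotomy between finite-step convergence and exponential decay in \eqref{GCONVR}—is exactly the conclusion of Theorem \ref{lemma_QCconv}. So the entire strategy is to use the former proposition to verify the hypothesis of the latter theorem.

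The first step is to invoke Proposition \ref{prop_SNsubQC}: since $T\in\Lambda$ is GAN with exponent $\gamma$ for some $\gamma\in(0,1)$, that proposition guarantees the existence of some $\rho\in(0,1)$ for which $T$ is FP-$\rho$-contractive, i.e. $\|Tx-\hx\|\leq\rho\|x-\hx\|$ holds for all $x\in\bRn\backslash\FixT$ and $\hx\in\FixT$. The second step is to feed this directly into Theorem \ref{lemma_QCconv}, whose sole hypothesis is that $T$ be FP-contractive. Applying that theorem to the Picard sequence $\{x^k\}$ with the given initial vector $x^0\in\bRn$ yields precisely the stated alternative: either $x^k$ reaches $\FixT$ in finitely many iterations, or \eqref{GCONVR} holds for some $\rho\in(0,1)$.

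I expect no genuine obstacle here, since the corollary is purely a composition of two already-established facts; the content has been front-loaded into Proposition \ref{prop_SNsubQC} (the nontrivial contradiction argument showing $\gamma<1$ forces FP-contractivity) and into Theorem \ref{lemma_QCconv} (the elementary iteration of the contraction estimate). The only point worth a sentence of care is that the $\rho$ appearing in \eqref{GCONVR} is the FP-contraction constant produced by Proposition \ref{prop_SNsubQC}, so one should note that the two $\rho$'s may be identified and that both lie in $(0,1)$, keeping the statement self-consistent. Consequently the written proof will be short—essentially two sentences chaining the cited results—and the main value of this corollary is conceptual: it sharpens Theorem \ref{convrateorder1}, upgrading the polynomial rate $o(k^{-(1-\gamma)/\gamma})$ to a genuine exponential (linear) rate for the subcritical range $\gamma\in(0,1)$.
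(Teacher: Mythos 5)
Your proposal is correct and follows exactly the same route as the paper: the paper's proof is a two-sentence application of Proposition \ref{prop_SNsubQC} to obtain FP-contractivity, followed by Theorem \ref{lemma_QCconv} to get the dichotomy. Your additional remark that the $\rho$ in \eqref{GCONVR} can be taken to be the FP-contraction constant is consistent with how Theorem \ref{lemma_QCconv} is proved.
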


\begin{proof}
By Proposition \ref{prop_SNsubQC}, a GAN operator $T\in\Lambda$ with exponent $\gamma\in(0,1)$ is FP-contractive. Then the desired result of this corollary follows from Theorem \ref{lemma_QCconv}.
\end{proof}

To obtain global convergence rates for the case with the exponent $\gamma\geq1$, we need an additional condition that establishes a relation between the local convergence rate and the global convergence rate. In view of this, we recall the definition of H$\ddot{\text{o}}$lder regular operators introduced in \cite{borwein2017convergence}. For a set $E\subset\bRn$ and $x\in \bRn$, we define 
$$
d(x,E):=\inf_{y\in E}\{\|x-y\|\}.
$$

\begin{definition}
Let $T\in\Lambda$. We say that $T$ is a H$\ddot{\text{o}}$lder regular (HR) operator with exponent $\gamma$, if there exist $\gamma\in\bR_+$ and $\mu\in\bR_+$ such that
$$
d(x,\FixT)\leq\mu\|x-Tx\|^\gamma,\ \ \mbox{for all}\ \ x\in\bRn.
$$
\end{definition}

We verify below that for any $\rho\in(0,1)$, a FP-$\rho$-contractive operator $T\in\Lambda$ is HR with exponent 1. By the FP-contractivity of $T$ and the the triangle inequality, for all $x\in\bRn$ and $\hx\in\FixT$, we have that
$$
\|Tx-x^*\|\leq\rho\|x-x^*\|\ \ \mbox{and}\ \  \|x-x^*\|\leq\|Tx-x^*\|+\|x-Tx\|,
$$
which imply that $\|x-x^*\|\leq\frac{1}{1-\rho}\|x-Tx\|$, and hence
$$
d(x,\FixT)\leq\frac{1}{1-\rho}\|x-Tx\|\ \ \mbox{for all}\ \ x\in\bRn.
$$
Thus, $T$ is HR with exponent 1. We shall show in the next section that the gradient descent operator is also HR with exponent 1 under appropriate assumptions.

Now we state the main result on the global convergence rate of the fixed-point iteration of GAN operators.

\begin{theorem}\label{thm_SNandBHR}
If operator $T\in\Lambda$ is GAN with exponent $\gamma_1\in\bR_+$ and HR with exponent $\gamma_2\in\bR_+$, then for any initial vector $x^0\in\bRn$, the Picard sequence $\{x^k\}$ of $T$ converges to some $x^*\in\FixT$, and there exists $\rho\in(0,1)$ such that
\begin{equation}\label{eqSNBHRrate}
\|x^k-x^*\|=\begin{cases}
O\left(k^{-\frac{\gamma_2}{\gamma_1(1-\gamma_2)}}\right),&0<\gamma_2<1,\\
O\left(\rho^k\right),&\gamma_2\geq1.
\end{cases}
\end{equation}
\end{theorem}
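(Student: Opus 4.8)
The plan is to reduce the theorem to a single scalar recursion for the distances $D_k := d(x^k,\FixT)$ to the fixed-point set, solve that recursion in the two regimes of $\gamma_2$, and then transfer the resulting rate to the distances $s_k := \|x^k-x^*\|$ to the limit. Convergence of $\{x^k\}$ to some $x^*\in\FixT$ is already supplied by Theorem~\ref{thm_SNconv}, so only the rate must be established. Writing $d_k := \|x^{k+1}-x^k\| = \|Tx^k-x^k\|$ for the residuals, the two hypotheses enter complementarily: generalized averaged nonexpansiveness with exponent $\gamma_1$ controls how much $D_k$ decreases per step, while H\"older regularity with exponent $\gamma_2$ forces the residual $d_k$ to be comparable to $D_k$, and it is the interaction of these that produces the rate.

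First I would set up the recursion for $D_k$. Since $T$ is nonexpansive, $\FixT$ is closed and convex, so the metric projection $p^k$ of $x^k$ onto $\FixT$ is well defined and $D_k = \|x^k-p^k\|$. Applying the fixed-point form \eqref{neq2_SNdef} of generalized averaged nonexpansiveness with $\hx = p^k\in\FixT$ (and the constant $\mu_1$ for exponent $\gamma_1$) gives
\[
\|x^{k+1}-p^k\|^{\gamma_1} + \mu_1 d_k^{\gamma_1} \le \|x^k-p^k\|^{\gamma_1} = D_k^{\gamma_1},
\]
and since $\|x^{k+1}-p^k\| \ge d(x^{k+1},\FixT) = D_{k+1}$, I obtain the one-step inequality $D_{k+1}^{\gamma_1} + \mu_1 d_k^{\gamma_1} \le D_k^{\gamma_1}$; in particular $\{D_k\}$ is nonincreasing and $D_k\to0$ (because $D_k\le s_k\to 0$). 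Now H\"older regularity, in the form $D_k\le\mu_2 d_k^{\gamma_2}$, lets me eliminate the residual via $d_k^{\gamma_1}\ge(\mu_2^{-1}D_k)^{\gamma_1/\gamma_2}$. Substituting and writing $u_k := D_k^{\gamma_1}$ collapses everything to the autonomous recursion
\[
u_{k+1} \le u_k - c\,u_k^{1/\gamma_2}, \qquad c := \mu_1\mu_2^{-\gamma_1/\gamma_2} > 0.
\]

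Solving this recursion splits exactly along the stated cases. When $\gamma_2\ge1$ the exponent $1/\gamma_2\le1$; since $u_k\to0$ we eventually have $u_k\le1$, whence $u_k^{1/\gamma_2}\ge u_k$ and the recursion yields $u_{k+1}\le(1-c)u_k$ for all large $k$ (the borderline and over-contractive cases only accelerate the decay), so $u_k$, and hence $D_k = u_k^{1/\gamma_1}$, decays geometrically, giving the $O(\rho^k)$ bound. When $0<\gamma_2<1$ the exponent is $1+\theta$ with $\theta=(1-\gamma_2)/\gamma_2>0$, and a standard estimate for sequences obeying $u_{k+1}\le u_k-c\,u_k^{1+\theta}$ gives $u_k = O(k^{-1/\theta}) = O(k^{-\gamma_2/(1-\gamma_2)})$, i.e. $D_k = O\left(k^{-\gamma_2/(\gamma_1(1-\gamma_2))}\right)$, which is the asserted exponent measured against $\FixT$.

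Finally I would transfer the rate from $D_k$ to $s_k=\|x^k-x^*\|$. The one-step inequality also yields $\mu_1 d_k^{\gamma_1}\le u_k-u_{k+1}$, hence a bound on the residual $d_k$ in terms of the now-known decay of $u_k$; combined with the triangle-inequality tail bound $\|x^k-x^*\|\le\sum_{j\ge k}d_j$ established in the proof of Theorem~\ref{thm_rellocglob} and the summation estimate of Proposition~\ref{prop_ratemin1}, this delivers the rate for $s_k$ — geometric when $\gamma_2\ge1$ and polynomial when $0<\gamma_2<1$. I expect the main obstacle to be precisely this transfer: the recursion is naturally sharp for the set-distance $D_k$, whereas $s_k$ measures distance to the particular limit $x^*$, so one must bound the tail $\sum_{j\ge k}d_j$ through the sharper difference form $d_j\lesssim(u_j-u_{j+1})^{1/\gamma_1}$ rather than the cruder $d_j\lesssim D_j$, in order not to lose a power relative to the stated exponent. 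Pinning down the constant $c$ and the borderline behaviour of the recursion when $\gamma_2\ge1$ is a secondary technical point.
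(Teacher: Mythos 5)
Your derivation of the scalar recursion is exactly the paper's: the same inequality $D_{k+1}^{\gamma_1}+\mu_1\|x^{k+1}-x^k\|^{\gamma_1}\leq D_k^{\gamma_1}$ obtained by testing \eqref{neq2_SNdef} against $P_{\Fix(T)}(x^k)$, the same elimination of the residual via H\"older regularity, and the same two-regime analysis of $u_{k+1}\leq u_k-c\,u_k^{1/\gamma_2}$ (the paper packages this as Proposition \ref{propforBHR} via Lemma \ref{recurrelatlem}). Up to the point where you have $D_k=O\left(k^{-\gamma_2/(\gamma_1(1-\gamma_2))}\right)$, respectively $D_k=O(\rho^k)$, your argument is sound and matches the paper.

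The gap is the final transfer from $D_k=d(x^k,\Fix(T))$ to $\|x^k-x^*\|$, which you yourself flag as the main obstacle but do not close. Your proposed route via the tail bound $\|x^k-x^*\|\leq\sum_{j\geq k}\|x^{j+1}-x^j\|$ does not deliver the stated exponent when $\gamma_1>1$: even using the sharper difference form $\|x^{j+1}-x^j\|\lesssim(u_j-u_{j+1})^{1/\gamma_1}$, with $u_j\sim j^{-s}$ (where $s=\gamma_2/(1-\gamma_2)$) one gets $(u_j-u_{j+1})^{1/\gamma_1}\sim j^{-(s+1)/\gamma_1}$, whose tail sum is of order $k^{1-(s+1)/\gamma_1}$; this exceeds the target $k^{-s/\gamma_1}$ by a factor $k^{1-1/\gamma_1}$, and the tail may not even converge if $(s+1)/\gamma_1\leq 1$. (For $\gamma_1\leq 1$ the telescoping does work, and the geometric case is also fine, but the theorem claims the rate for all $\gamma_1\in\bR_+$.) The missing idea is that no tail sum is needed at all: since $T$ is nonexpansive, $\{\|x^m-\hx\|\}_m$ is nonincreasing for every $\hx\in\Fix(T)$; taking $\hx=P_{\Fix(T)}(x^k)$ and letting $m\to\infty$ gives $\|x^*-P_{\Fix(T)}(x^k)\|\leq\|x^k-P_{\Fix(T)}(x^k)\|=D_k$, whence by the triangle inequality $\|x^k-x^*\|\leq 2D_k$. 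This lossless, $\gamma_1$-independent comparison is exactly how the paper (in the proof of Proposition \ref{propforBHR}) converts the rate for $D_k$ into the rate for $\|x^k-x^*\|$; with it in place the rest of your argument goes through.
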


To prove Theorem \ref{thm_SNandBHR}, we recall Lemma 4.1 of \cite{borwein2014analysis}.

\begin{lemma}\label{recurrelatlem}
Suppose that $\{a_k\}$ and $\{b_k\}$ be two sequences of nonnegative numbers. For $p>0$, if there exists $K\in\bN_0$ such that
$$
a_{k+1}\leq a_k(1-b_ka_k^p),\ \ \mbox{for all}\ \ k\geq K,
$$
then
$$
a_k\leq\left(a_K^{-p}+p\sum_{j=K}^{k-1}b_j\right)^{-\frac{1}{p}}, \ \ \mbox{for all}\ \ k>K.
$$
\end{lemma}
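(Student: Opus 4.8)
The plan is to pass to the reciprocal-power sequence $c_k := a_k^{-p}$ and telescope a linear lower bound for its increments. First I would dispose of the degenerate case: if $a_k=0$ for some $k\geq K$, then the recursion forces $a_{k+1}\leq 0$, hence $a_{k+1}=0$, and by induction $a_j=0$ for all $j\geq k$; for such indices the asserted bound holds trivially (its right-hand side is nonnegative), so it suffices to treat the indices at which the iterates are strictly positive. Accordingly, assume $a_k>0$ for all $k$ in the range under consideration. Since $a_{k+1}\geq 0$ and $a_k>0$, the hypothesis $a_{k+1}\leq a_k(1-b_ka_k^p)$ forces $1-b_ka_k^p>0$, so that $t_k:=b_ka_k^p\in[0,1)$.

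Next I would invert the recursion. Because $t\mapsto t^{-p}$ is strictly decreasing on $\bR_+$, applying it to $a_{k+1}\leq a_k(1-t_k)$ gives $c_{k+1}=a_{k+1}^{-p}\geq a_k^{-p}(1-t_k)^{-p}=c_k(1-t_k)^{-p}$. The crucial ingredient is then the Bernoulli-type inequality $(1-t)^{-p}\geq 1+pt$, valid for every $p>0$ and every $t\in[0,1)$. I would justify it by convexity: the function $t\mapsto(1-t)^{-p}$ has second derivative $p(p+1)(1-t)^{-p-2}>0$, so its graph lies above the tangent line at $t=0$, which is exactly $1+pt$. Substituting $t=t_k$ and using the identity $a_k^p c_k=1$ then yields $c_{k+1}\geq c_k(1+pt_k)=c_k+p\,b_k a_k^p c_k=c_k+p\,b_k$.

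Finally I would telescope the increment bound $c_{k+1}-c_k\geq p\,b_k$ over $k=K,K+1,\dots,m-1$ to obtain $c_m\geq c_K+p\sum_{j=K}^{m-1}b_j=a_K^{-p}+p\sum_{j=K}^{m-1}b_j$ for every $m>K$. Reverting via $a_m=c_m^{-1/p}$, and using once more that $t\mapsto t^{-1/p}$ is decreasing, produces the claimed estimate. The only genuinely delicate point is the Bernoulli inequality for arbitrary positive exponents $p$ (not merely $p\geq 1$), which the uniform convexity argument above settles; the boundary value $t=1$ never arises under the positivity assumption, since it would force $a_{k+1}=0$ and thus fall under the degenerate case already handled.
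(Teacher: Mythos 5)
Your proof is correct: the passage to $c_k:=a_k^{-p}$, the Bernoulli-type inequality $(1-t)^{-p}\geq 1+pt$ for $p>0$ and $t\in[0,1)$ justified by convexity, and the telescoping $c_{k+1}\geq c_k+pb_k$ all check out, and your treatment of the degenerate case $a_k=0$ (which also forces $t_k<1$ to hold whenever it is needed) is a careful touch. The paper itself gives no proof, quoting the result as Lemma 4.1 of \cite{borwein2014analysis}, and your argument is essentially the standard one found there, so there is nothing to flag.
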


For a closed and convex set $E\subset\bRn$, we define $P_{E}(x):=\argmin_{y\in E}\{\|x-y\|\}$. Note that $\FixT$ is closed and convex if $T\in\Lambda$ is nonexpansive. Hence, $P_{\FixT}(x)$ is well-defined, which will be used in the proof of the next Proposition.

\begin{proposition}\label{propforBHR}
Suppose that $T\in\Lambda$ is nonexpansive. For the Picard sequence $\{x^k\}$ of $T$ with a given initial vector $x^0\in\bRn$,  let $d_k:=d(x^k,\FixT)$, $k\in\bN_0$. If there exist $\gamma>0$, $\mu>0$, $\vartheta\geq1$ and $K\in\bN_0$ such that
\begin{equation}\label{ineq_BHRlemma}
d_{k+1}^\gamma\leq d_k^\gamma-\mu d_k^{\gamma\vartheta},\ \ \mbox{for all}\ \  k\geq K,
\end{equation} 
then $\{x^k\}$ converges to some $x^*\in\FixT$. Moreover, there exist $C\in\bR_+$ and $\rho\in[0,1)$ such that for $k>K$,
$$
\|x^k-x^*\|\leq\begin{cases}
Ck^{-\frac{1}{\gamma(\vartheta-1)}},&\vartheta>1,\\
C\rho^{k-K},&\vartheta=1.
\end{cases}
$$
\end{proposition}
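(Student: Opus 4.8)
The plan is to extract the convergence rate directly from the scalar recursion \eqref{ineq_BHRlemma} for $d_k$, and then transfer it to $\|x^k-x^*\|$ through a sharp comparison between $d_k$ and $\|x^k-x^*\|$. For notational convenience I write $p^k:=P_{\FixT}(x^k)$, so that $\|x^k-p^k\|=d_k$. The first observation is that $\{d_k\}$ is monotonically nonincreasing for every $k\in\bN_0$: since $p^k\in\FixT$ and $T$ is nonexpansive, $d_{k+1}\leq\|Tx^k-Tp^k\|\leq\|x^k-p^k\|=d_k$. Summing \eqref{ineq_BHRlemma} over $k=K,K+1,\dots,N$ telescopes the right-hand side and yields $\mu\sum_{k=K}^{N}d_k^{\gamma\vartheta}\leq d_K^\gamma$, so that $\sum_{k\geq K}d_k^{\gamma\vartheta}<\infty$ and hence $d_k\to 0$.

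With $d_k\to 0$ in hand, convergence of the iterates follows quickly. Using $p^k\in\FixT$ and nonexpansiveness once more, $\|x^{k+1}-x^k\|\leq\|Tx^k-Tp^k\|+\|p^k-x^k\|\leq 2d_k\to 0$, so Lemma \ref{lemma_genFPconv} guarantees that $\{x^k\}$ converges to some $x^*\in\FixT$. The crucial step is then to control $\|x^k-x^*\|$ by $d_k$ rather than by the tail sum $\sum_{j\geq k}\|x^{j+1}-x^j\|$. For any $j\geq k$, applying the nonexpansive map $T^{j-k}$ to the pair $x^k,p^k$ (and using $T^{j-k}p^k=p^k$) gives $\|x^j-p^k\|\leq\|x^k-p^k\|=d_k$; letting $j\to\infty$ shows $\|x^*-p^k\|\leq d_k$, and therefore $\|x^k-x^*\|\leq\|x^k-p^k\|+\|p^k-x^*\|\leq 2d_k$. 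This two-sided comparison $d_k\leq\|x^k-x^*\|\leq 2d_k$ is exactly what lets the rate of $d_k$ pass to $\|x^k-x^*\|$ without loss.

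It remains to estimate $d_k$ from \eqref{ineq_BHRlemma}. Setting $a_k:=d_k^\gamma$, the recursion reads $a_{k+1}\leq a_k(1-\mu a_k^{\vartheta-1})$. When $\vartheta>1$, Lemma \ref{recurrelatlem} applied with $p:=\vartheta-1$ and $b_j:=\mu$ gives $a_k\leq\bigl(a_K^{-(\vartheta-1)}+\mu(\vartheta-1)(k-K)\bigr)^{-1/(\vartheta-1)}\leq\bigl(\mu(\vartheta-1)(k-K)\bigr)^{-1/(\vartheta-1)}$, whence $d_k=O\bigl(k^{-1/(\gamma(\vartheta-1))}\bigr)$ and, by the comparison above, $\|x^k-x^*\|\leq Ck^{-1/(\gamma(\vartheta-1))}$. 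When $\vartheta=1$ the recursion becomes $a_{k+1}\leq(1-\mu)a_k$; if $\mu<1$ this iterates to $a_k\leq(1-\mu)^{k-K}a_K$, giving $d_k\leq\rho^{k-K}d_K$ with $\rho:=(1-\mu)^{1/\gamma}\in(0,1)$, while if $\mu\geq1$ the recursion forces $d_{K+1}=0$ (finite termination), covered by taking $\rho=0$. In either subcase $\|x^k-x^*\|\leq 2d_k\leq C\rho^{k-K}$.

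I expect the main obstacle to be the comparison step $\|x^k-x^*\|\leq 2d_k$, because the naive route of bounding $\|x^k-x^*\|$ by the telescoping tail $\sum_{j\geq k}\|x^{j+1}-x^j\|\leq 2\sum_{j\geq k}d_j$ loses a power of $k$ (and may even diverge when $\gamma(\vartheta-1)\geq1$), yielding only the weaker rate $O\bigl(k^{-(1/(\gamma(\vartheta-1))-1)}\bigr)$. Avoiding this loss hinges on the nonexpansiveness estimate $\|x^j-p^k\|\leq d_k$ for all $j\geq k$, which shows that the limit $x^*$ stays within distance $d_k$ of the nearest fixed point $p^k$; everything else is routine bookkeeping with the recurrence lemma.
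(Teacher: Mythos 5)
Your proposal is correct and follows essentially the same route as the paper's proof: the substitution $a_k=d_k^\gamma$ with Lemma \ref{recurrelatlem}, the comparison $\|x^k-x^*\|\leq 2d_k$ obtained from $\|x^*-P_{\Fix(T)}(x^k)\|\leq d_k$ via nonexpansiveness, and Lemma \ref{lemma_genFPconv} for convergence are exactly the paper's ingredients. The only cosmetic differences are that you first deduce $d_k\to 0$ from a telescoping sum rather than directly from the rate estimate, and you make the $\mu\geq 1$ degenerate subcase of $\vartheta=1$ slightly more explicit.
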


\begin{proof}
Let $a_k=d_k^\gamma$ and $p=\vartheta-1\geq0$. Then \eqref{ineq_BHRlemma} becomes
\begin{equation}\label{ineq_dkak}
a_{k+1}\leq a_{k}(1-\mu a_{k}^p),\ \ \mbox{for all}\ \  k\geq K.  
\end{equation}
We consider two cases based on the value of $\vartheta$.

$Case\ 1$: $\vartheta>1$. We first show that $\{x^k\}$ converges to some $x^*\in\FixT$. It follows from Lemma \ref{recurrelatlem} with $b_k:=\mu$ that
$$
a_k\leq\left(a_K^{-p}+p\mu(k-K)\right)^{-\frac{1}{\vartheta-1}},\ \ \mbox{for all}\ \  k>K.
$$
Hence, there exists $C_1>0$ such that for $k>K$,
$$
d_k=a_k^{\frac{1}{\gamma}}\leq C_1k^{-\frac{1}{\gamma(\vartheta-1)}}\to 0, \ \ \mbox{as}\ \ k\to \infty.
$$
By the nonexpansiveness of $T$, we know that $\{\|x^{k}-\hx\|\}$ is monotonically decreasing for any $\hx\in\FixT$. Then
\begin{align*}
\|x^{k+1}-x^k\|&\leq\|x^{k+1}-P_{\FixT}(x^k)\|+\|x^{k}-P_{\FixT}(x^k)\| \\
&\leq2\|x^{k}-P_{\FixT}(x^k)\|=2d_k\to0.
\end{align*}
We conclude from Lemma \ref{lemma_genFPconv} that $\{x^k\}$ converges to some $x^*\in\FixT$. Using the monotonicity of $\{\|x^{k}-\hx\|\}$ for any $\hx\in\FixT$ again, we have that
$$
\|x^m-P_{\FixT}(x^k)\|\leq\|x^{m-1}-P_{\FixT}(x^k)\|\leq\cdots\leq\|x^{k}-P_{\FixT}(x^k)\|=d_k
$$
for all $m>k$, $k\in\bN_0$. Letting $m$ tend to infinity, the above inequality becomes
$$
\|x^*-P_{\FixT}(x^k)\|\leq d_k,\ \ \mbox{for all}\ \ k\in\bN_0,
$$
which together with the triangle inequality implies for all $k>K$ that
\begin{align*}
\|x^k-x^*\|&\leq\|x^k-P_{\FixT}(x^k)\|+\|x^*-P_{\FixT}(x^k)\|\\
&\leq2d_k\leq2C_1k^{-\frac{1}{\gamma(\vartheta-1)}}. 
\end{align*}

$Case\ 2$: Suppose that $\vartheta=1$. Then \eqref{ineq_BHRlemma} becomes $d_{k+1}^\gamma\leq(1-\mu)d_k^\gamma$ for all $k\geq K$. This implies that $\mu\in(0,1]$ and for $k>K$, $d_{k}\leq d_K(1-\mu)^\frac{k-K}{\gamma}\to0$. By the same argument as Case 1, there exists some $x^*\in\FixT$ such that
$$
\|x^k-x^*\|\leq2d_k\leq2d_K(1-\mu)^\frac{k-K}{\gamma}.
$$
Therefore, the proof is completed by setting $C=\max\{2C_1,2d_K\}$ and $\rho=(1-\mu)^\frac{1}{\gamma}$.
\end{proof}

Note that the result in Proposition 3.1 of \cite{borwein2017convergence} is a special case of the above proposition with $\gamma=2$. The generalization for any $\gamma\in\bR_+$ is necessary for the global convergence rate analysis of the fixed-iteration of GAN operator. We next employ Proposition \ref{propforBHR} to prove Theorem \ref{thm_SNandBHR}.

\begin{proof}[Proof of Theorem \ref{thm_SNandBHR}]
Since $T$ is GAN with exponent $\gamma_1$, by Theorem \ref{thm_SNconv}, we know that $\{x^k\}$ converges to some $x^*\in\FixT$ and $\lim_{k\to\infty}\|x^{k+1}-x^{k}\|=0$. Moreover, there exists $\mu_1\in\bR_+$ such that for all $k\in\bN_0$,
\begin{equation}\label{SNineq4BHR}
\|x^{k+1}-P_{\FixT}(x^k)\|^{\gamma_1}\leq\|x^k-P_{\FixT}(x^k)\|^{\gamma_1}-\mu_1\|x^{k+1}-x^{k}\|^{\gamma_1}.
\end{equation}
Let $d_k:=d(x^k,\FixT)$, $k\in\bN_0$. By the definition of $d_{k+1}$ and \eqref{SNineq4BHR}, we obtain that
\begin{equation}\label{BHRthmineq1}
d_{k+1}^{\gamma_1}\leq\|x^{k+1}-P_{\FixT}(x^k)\|^{\gamma_1}\leq d_k^{\gamma_1}-\mu_1\|x^{k+1}-x^{k}\|^{\gamma_1},\ \ \mbox{for all}\ \ k\in\bN_0.
\end{equation}
It follows from the H$\ddot{\text{o}}$lder regularity of $T$ that there exists $\mu_2\in\bR_+$ such that
\begin{equation}\label{BHRdk}
d_k\leq\mu_2\|x^{k+1}-x^{k}\|^{\gamma_2},\ \ \mbox{for all}\ \ k\in\bN_0.
\end{equation}
Since $\lim_{k\to\infty}\|x^{k+1}-x^{k}\|=0$, there exists $K$ such that $\|x^{k+1}-x^{k}\|<1$ for all $k\geq K$, which together with \eqref{BHRdk} implies that for $\gamma_2\geq1$,
\begin{equation}\label{BHRdk2}
d_k\leq\mu_2\|x^{k+1}-x^{k}\|,\ \ \mbox{for all}\ \ k\geq K.
\end{equation}
Now combing \eqref{BHRthmineq1} with \eqref{BHRdk} for $0<\gamma_2<1$ gives that
\begin{equation}\label{BHRdk3}
d_{k+1}^{\gamma_1}\leq d_k^{\gamma_1}-\mu_1\mu_2^{-\frac{\gamma_1}{\gamma_2}}d_k^{\frac{\gamma_1}{\gamma_2}},\ \ \mbox{for all}\ \ k\in\bN_0.
\end{equation}
Combing \eqref{BHRthmineq1} with \eqref{BHRdk2} for $\gamma_2\geq1$ gives that
\begin{equation}\label{BHRdk4}
d_{k+1}^{\gamma_1}\leq d_k^{\gamma_1}-\mu_1\mu_2^{-\gamma_1}d_k^{\gamma_1},\ \ \mbox{for all}\ \ k\geq K.
\end{equation}
Then we conclude from Proposition \ref{propforBHR} that there exist $C\in\bR_+$ and $\rho\in[0,1)$ such that for all $k>K$,
\begin{equation}\label{neqSNBHRrate}
\|x^k-x^*\|\leq\begin{cases}
Ck^{-\frac{\gamma_2}{\gamma_1(1-\gamma_2)}},&0<\gamma_2<1,\\
C\rho^{k-K},&\gamma_2\geq1.
\end{cases}
\end{equation}
For $0<\gamma_2<1$, $\|x^k-x^*\|=O\left(k^{-\frac{\gamma_2}{\gamma_1(1-\gamma_2)}}\right)$ follows from \eqref{neqSNBHRrate} immediately. We next consider the case $\gamma_2\geq1$. In this case, if $\rho=0$, then it is obvious that $\|x^k-x^*\|=O\left(\rho^{k}\right)$ holds according to \eqref{neqSNBHRrate}. If $\rho\in(0,1)$, then for all $k>K$, $\|x^k-x^*\|\leq C'\rho^{k}$, where $C'=C\rho^{-K}$. Thus $\|x^k-x^*\|=O\left(\rho^{k}\right)$.
\end{proof}

Theorem \ref{thm_SNandBHR} extends the result given in \cite{borwein2017convergence} where operators that are averaged nonexpansive (GAN with exponent $\gamma_1=2$) and HR with exponent $\gamma_2\in(0,1]$ were considered. 

We close this section by listing convergence rates of the fixed-point iterations of GAN operators with different exponents.

\begin{table}[H]\label{tab_convrate}
\caption{Convergence rates of the fixed-point iterations of GAN operators}
\centering
\begin{tabular}{c|l|l}
\toprule
Case &\hspace{5em}Conditions &Convergence rate\\
\hline
1& GAN with exponent $\gamma\in[1,\infty)$ & local: $o\left(k^{-\frac{1}{\gamma}}\right)$\\
\hline
2& GAN with exponent $\gamma\in(0,1)$ & global: exponential\\
\hline
\multirow{2}*{3}& GAN with exponent $\gamma_1\in[1,\infty)$ & \multirow{2}*{global: $O\left(k^{-\frac{\gamma_2}{\gamma_1(1-\gamma_2)}}\right)$}\\
&\& HR with exponent $\gamma_2\in(0,1)$ &\\
\hline
\multirow{2}*{4}& GAN with exponent $\gamma_1\in[1,\infty)$ &\multirow{2}*{global: exponential}\\
&\& HR with exponent $\gamma_2\in[1,\infty)$ &\\
\bottomrule
\end{tabular}
\end{table}

\section{Convergence rate analysis for optimization}\label{sec_convrateopti}
In this section, we first describe the fixed-point algorithms for the convex optimization problems described in Section \ref{sectFPopt}, and then employ the results in Sections \ref{seclocrate} and \ref{secglobrate} to analyze their convergence rates. The GAN operators provide a unified framework for developing fixed-point iterative schemes for convex optimization problems and analyzing their convergence and convergence rates.

By the definition \eqref{defT1} of operator $T_1$, its fixed-point iteration is the gradient descent algorithm given as follows:
\begin{equation}\label{algo1}
x^{k+1}=x^{k}-\beta\nabla f(x^k),\ \ \mbox{where}\ \ \beta\in\bR_+.
\end{equation}
The fixed-point iteration \eqref{defT2} of $T_2$ is given by
\begin{equation}\label{algo2}
x^{k+1}=\prox_{\beta g}(x^{k}-\beta\nabla f(x^k)),\ \ \mbox{where}\ \ \beta\in\bR_+.
\end{equation}
We next derive the fixed-point iteration of $T_3$ defined by \eqref{defT3}. Note that $W=R^{-1}G$, where $R:=\left(\begin{array}{cc}
\beta I_n&\\
&\eta I_m
\end{array}\right)$. We can verify that the fixed-point iteration $x^{k+1}=T_3(x^k)$ is equivalent to
$$
v^{k+1}=\widetilde{T}\left((E-G)v^{k+1}+(G-R\nabla r)(v^k)\right),
$$
that is,
$$
\begin{cases}
x^{k+1}=\prox_{\beta h}\left(x^{k}-\beta(\nabla f(x^k)+B^\top y^k)\right),\\
y^{k+1}=\prox_{\eta g^*}\left(y^k+\eta B(2x^{k+1}-x^k)\right).
\end{cases}
$$
By using the well-known Moreau decomposition \cite{lin2019krasnoselskii,moreau1962fonctions}
$$
\mI=\prox_{\eta g^*}+(\eta\mI)\circ\prox_{\frac{1}{\eta}g}\circ(\eta^{-1}\mI),
$$
we have the following fixed-point iteration of $T_3$,
\begin{equation}\label{algo3}
\begin{cases}
x^{k+1}=\prox_{\beta h}\left(x^{k}-\beta(\nabla f(x^k)+B^\top y^k)\right),\\
y^{k+1}=\eta\left(\mI-\prox_{\frac{1}{\eta}g}\right)\left(\frac{1}{\eta}y^k+B(2x^{k+1}-x^k)\right),
\end{cases}\ \mbox{where}\ \ \beta,\eta\in\bR_+.
\end{equation}

We next show the generalized averaged nonexpansiveness with exponent 2 of $T_1$, $T_2$ and $T_3$, which offers $o\left(k^{-\frac{1}{2}}\right)$ local convergence rate for algorithms \eqref{algo1}, \eqref{algo2} and \eqref{algo3}. We then provide higher order convergence rates for the fixed-point algorithms \eqref{algo1} and \eqref{algo2} under additional assumptions.

\begin{proposition}\label{ToptSN2}
Let $T_1$, $T_2$ and $T_3$ be defined by \eqref{defT1}, \eqref{defT2} and \eqref{defT3}, respectively. If $\beta<\frac{2}{L}$ for $T_1$, $T_2$ and $T_3$, and $\mu<\frac{2(2-\beta L)}{4\beta\|B\|_2^2+L(2-\beta L)}$ for $T_3$, then $T_1$ and $T_2$ are GAN with exponent 2 with respect to $\|\cdot\|_2$, $T_3$ is GAN with exponent 2 with respect to $\|\cdot\|_W$.
\end{proposition}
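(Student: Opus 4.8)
The plan is to prove the generalized averaged nonexpansiveness with exponent $2$ for each of the three operators by exploiting the structure of each operator as a composition involving a gradient-step part and a proximity-operator part. For $T_1 = \mI - \beta\nabla f$, I would start from the standard consequence of $f$ having an $L$-Lipschitz gradient, namely the cocoercivity (Baillon--Haddad) inequality $\langle \nabla f(x)-\nabla f(y), x-y\rangle \geq \frac{1}{L}\|\nabla f(x)-\nabla f(y)\|_2^2$, and expand $\|T_1x - T_1y\|_2^2 = \|x-y\|_2^2 - 2\beta\langle \nabla f(x)-\nabla f(y), x-y\rangle + \beta^2\|\nabla f(x)-\nabla f(y)\|_2^2$. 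The condition $\beta < \frac{2}{L}$ is exactly what makes the cross term dominate, so that after identifying $(\mI-T_1)x-(\mI-T_1)y = \beta(\nabla f(x)-\nabla f(y))$, one gets a positive coefficient $\mu$ in front of $\|(\mI-T_1)x-(\mI-T_1)y\|_2^2$. This directly yields inequality \eqref{neq_ANandSN} with exponent $2$, which by Proposition \ref{prop_averaged} is the generalized averaged nonexpansiveness with exponent $2$.

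For $T_2 = \prox_{\beta g}\circ T_1$, I would invoke the closeness result, Proposition \ref{prop_SNcompos}, together with the fact that the proximity operator $\prox_{\beta g}$ is firmly nonexpansive and hence GAN with exponent $2$ (indeed $\frac{1}{2}$-averaged nonexpansive, so $\mu=1$ in exponent $2$ by the remark following Proposition \ref{prop_averaged}). Since both $T_1$ and $\prox_{\beta g}$ are GAN with exponent $\gamma = 2 \geq 1$, Proposition \ref{prop_SNcompos} immediately gives that $T_2 = \prox_{\beta g}\circ T_1$ is GAN with exponent $2$ with an explicit $\mu = 2^{1-2}\min\{\mu_1,\mu_2\} = \frac{1}{2}\min\{\mu_1,1\}$. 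This is the clean, modular part of the proof.

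For $T_3$, the argument is substantially harder because of the weighted norm $\|\cdot\|_W$ and the coupling between the primal and dual blocks through $B$. I would work in the $W$-inner product and treat $T_3 = T_G\circ(\mI - W^{-1}\nabla r)$. The gradient-step factor $\mI - W^{-1}\nabla r$ should be handled analogously to the $T_1$ case but now with respect to $\|\cdot\|_W$: using that $r(v)=f(x)$ depends only on the primal block and that $\nabla f$ is $L$-Lipschitz, I would establish a $W$-weighted cocoercivity-type estimate, which is where the condition $\beta < \frac{2}{L}$ reappears. The operator $T_G$ encodes the implicit primal-dual step governed by the matrices $E$, $G$, $W$; I would show it is firmly nonexpansive (hence GAN with exponent $2$) with respect to $\|\cdot\|_W$, drawing on the derivation in \cite{li2016fast,lin2019krasnoselskii}. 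The threshold $\mu < \frac{2(2-\beta L)}{4\beta\|B\|_2^2 + L(2-\beta L)}$ strongly suggests that the two factors are not composed via Proposition \ref{prop_SNcompos} directly (which would lose a factor $2^{1-\gamma}$) but are combined by a careful expansion in which the $\|B\|_2^2$ term quantifies how the dual coupling degrades the contraction margin of the primal gradient step.

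The main obstacle I anticipate is precisely the $T_3$ case: verifying that $W$ is positive definite (needed for $\|\cdot\|_W$ to be a norm, which itself constrains $\beta$, $\eta$ through $\|B\|_2$) and then carrying out the $W$-weighted energy expansion so that the cross terms involving $B$ are absorbed and the explicit bound on $\mu$ emerges. Establishing the firm nonexpansiveness of $T_G$ in the $W$-metric, rather than simply citing it, will require understanding the fixed-point characterization $v = \widetilde{T}((E-G)v + Gu)$ and exploiting the firm nonexpansiveness of $\widetilde{T}$ (a block of proximity operators) transported through the matrix $G$. Everything else reduces to the cocoercivity inequality and the composition lemma, which are routine given the machinery already developed in the excerpt.
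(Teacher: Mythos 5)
Your treatment of $T_1$ and $T_2$ is correct and reaches the same conclusion as the paper by a slightly different, more self-contained route: the paper simply cites the proof of Theorem 26.14 of \cite{bauschke2017convex} to get averagedness of both $T_1$ and $T_2$ and then invokes Proposition \ref{prop_averaged}, whereas you carry out the cocoercivity expansion for $T_1$ explicitly (yielding $\mu=\tfrac{2-\beta L}{\beta L}$) and then compose with the firmly nonexpansive $\prox_{\beta g}$ via Proposition \ref{prop_SNcompos}. Since the statement does not assert a particular value of $\mu$, the loss of the factor $2^{1-\gamma}=\tfrac12$ in the composition lemma is harmless, and this part of your argument is complete.

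For $T_3$ your plan is pointed in the right direction but is not a proof: you explicitly leave the $W$-weighted energy expansion and the firm nonexpansiveness of $T_G$ as anticipated obstacles rather than resolving them. The paper avoids this work entirely by citing Lemma 7 of \cite{lin2019krasnoselskii}, which states that $T_3$ is averaged nonexpansive with respect to $\|\cdot\|_W$ whenever $W-\frac{L}{2}I$ is positive definite, and then reducing that positive definiteness to the inequality $\bigl(\frac{1}{\beta}-\frac{L}{2}\bigr)\bigl(\frac{1}{\eta}-\frac{L}{2}\bigr)>\|B\|_2^2$ via Lemma 6.2 of \cite{li2015multi}; the stated parameter bounds are exactly what make this inequality hold. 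Your intuition that the $\|B\|_2^2$ threshold arises from a bespoke expansion in which the dual coupling degrades the primal contraction margin is somewhat off: the coupling through $B$ enters only through the eigenvalue condition $\lambda_{\min}(W)>\frac{L}{2}$ (which simultaneously guarantees that $\|\cdot\|_W$ is a norm and that $\mI-W^{-1}\nabla r$ is averaged in the $W$-metric), after which the two factors are combined by the ordinary composition-of-averaged-operators argument, not by a new energy estimate. So the missing ingredient in your proposal is precisely the statement you would need to prove or cite: firm nonexpansiveness of $T_G$ in the $W$-inner product together with the averagedness of the $W$-preconditioned gradient step under $\lambda_{\min}(W)>\frac{L}{2}$. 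As a minor point, note that the parameter called $\mu$ in the proposition is the dual step size $\eta$ from the definition of $T_3$; this is a notational slip in the paper itself, not in your argument.
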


\begin{proof}
We first show the generalized averaged nonexpansiveness of $T_1$ and $T_2$. It follows from the proof of Theorem 26.14 of \cite{bauschke2017convex} that $T_1$ and $T_2$ are both averaged nonexpansive with respect to $\|\cdot\|_2$. Hence they are both GAN with exponent 2 with respect to $\|\cdot\|_2$ by Proposition \ref{prop_averaged}.

We now turn to considering operator $T_3$. It follows from Lemma 7 of \cite{lin2019krasnoselskii} that $T_3$ is averaged nonexpansive with respect to $\|\cdot\|_W$ if the minimum eigenvalue of $W$ is greater than $\frac{L}{2}$, that is, $W-\frac{L}{2}I$ is positive definite. Let $\tilde{B}:=\frac{1}{\sqrt{\left(\frac{1}{\beta}-\frac{L}{2}\right)\left(\frac{1}{\mu}-\frac{L}{2}\right)}}B$. According to Lemma 6.2 of \cite{li2015multi}, $W-\frac{L}{2}I$ is positive definite if and only if $\|\tilde{B}\|_2<1$, that is,
\begin{equation}\label{betamuB2}
\left(\frac{1}{\beta}-\frac{L}{2}\right)\left(\frac{1}{\mu}-\frac{L}{2}\right)>\|B\|_2^2.
\end{equation}
Since $\beta\in\left(0,\frac{2}{L}\right)$, $\eta\in\left(0,\frac{2(2-\beta L)}{4\beta\|B\|_2^2+L(2-\beta L)}\right)$, it is easy to verify that \eqref{betamuB2} holds, which implies that $T_3$ is averaged nonexpansive with respect to $\|\cdot\|_W$, and hence it is GAN with exponent 2 with respect to $\|\cdot\|_W$.
\end{proof}

\begin{proposition}\label{thm_convT123}
Suppose that $\beta<\frac{2}{L}$ and $\mu<\frac{2(2-\beta L)}{4\beta\|B\|_2^2+L(2-\beta L)}$. Then for arbitrary initial vectors $x^{0}\in\bRn$ and $y^{0}\in\bRm$, the following statements hold:
\begin{itemize}
\item[$(i)$] Sequence $\{x^k\}$ generated by Algorithm \eqref{algo1} converges to a minimizer of the objective function $f$.
\item[$(ii)$] Sequence $\{x^k\}$ generated by Algorithm \eqref{algo2} converges to a minimizer of the objective function $f+g$.
\item[$(iii)$] Sequences $\{x^k\}$ generated by Algorithm \eqref{algo3} with $\{y^k\}$ converges to a minimizer of the objective function $f+g\circ B+h$.
\item[$(iv)$] The local convergence rate of $\{x^k\}$ in the above all three cases is $o\left(k^{-\frac{1}{2}}\right)$.
\end{itemize}
\end{proposition}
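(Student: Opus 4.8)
The plan is to prove the four parts of Proposition~\ref{thm_convT123} as direct consequences of the generalized averaged nonexpansiveness established in Proposition~\ref{ToptSN2} together with the convergence theory developed in Section~\ref{seclocrate}. The hypotheses on $\beta$, $\eta$ (written $\mu$ in the statement of Proposition~\ref{ToptSN2}) are precisely those guaranteeing that $T_1$, $T_2$ are GAN with exponent $2$ with respect to $\|\cdot\|_2$ and that $T_3$ is GAN with exponent $2$ with respect to $\|\cdot\|_W$. Since every GAN operator lies in $\Lambda$ and the convergence machinery of Theorem~\ref{thm_SNconv} applies to any GAN operator with an arbitrary positive exponent and with respect to any weighted norm, the structure of all four proofs will be uniform.

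First I would treat parts $(i)$--$(iii)$ together. By Proposition~\ref{ToptSN2}, under the stated assumptions each of $T_1$, $T_2$, $T_3$ is GAN with exponent $2$ (with respect to $\|\cdot\|_2$ for $T_1,T_2$ and $\|\cdot\|_W$ for $T_3$). Applying Theorem~\ref{thm_SNconv} with $\gamma=2$, the Picard sequence of each operator converges to a fixed-point of that operator. It then remains to translate ``fixed-point of $T_i$'' into ``minimizer of the corresponding objective.'' For $T_1=\mI-\beta\nabla f$, a fixed-point $x^*$ satisfies $\nabla f(x^*)=0$, which by convexity of $f$ means $x^*$ minimizes $f$. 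For $T_2=\prox_{\beta g}\circ(\mI-\beta\nabla f)$, a fixed-point satisfies $x^*=\prox_{\beta g}(x^*-\beta\nabla f(x^*))$, and invoking Fermat's rule together with the subdifferential--proximity relation cited in Case~2 of Section~\ref{sectFPopt} shows $0\in\nabla f(x^*)+\partial g(x^*)$, i.e.\ $x^*$ minimizes $f+g$. For $T_3$, the equivalence between fixed-points of $T_3$ and minimizers of $f+g\circ B+h$ was already recorded in Case~3 (with reference to \cite{li2016fast,lin2019krasnoselskii}), so the corresponding $x$-component of the limit minimizes $f+g\circ B+h$.

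For part $(iv)$, the local convergence rate follows by applying the rate estimate \eqref{eq_SNconvrate} of Theorem~\ref{thm_SNconv} with $\gamma=2$, which gives $\|x^{k+1}-x^k\|=o(k^{-1/2})$ in each case. One small point to handle carefully is that for $T_3$ the norm involved is $\|\cdot\|_W$ rather than $\|\cdot\|_2$; since all norms on a finite-dimensional space are equivalent, the $o(k^{-1/2})$ rate in $\|\cdot\|_W$ transfers to the same rate in $\|\cdot\|_2$, and the convergence of the $x$-component follows from that of the full vector $v^k=(x^k,y^k)^\top$.

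The main obstacle I anticipate is not the convergence or rate arguments themselves, which are immediate corollaries of the earlier theorems, but rather the bookkeeping connecting fixed-points to minimizers—particularly for $T_3$, where the fixed-point characterization passes through the introduced dual variable $y$ and the reformulation involving $E$, $G$, $W$ and $\widetilde{T}$. I would lean on the derivation already summarized in Section~\ref{sectFPopt} and the cited references rather than re-derive the fixed-point/minimizer equivalence from scratch, so that the proof of Proposition~\ref{thm_convT123} remains a clean application of Proposition~\ref{ToptSN2} and Theorem~\ref{thm_SNconv}.
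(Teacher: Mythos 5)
Your proposal is correct and follows essentially the same route as the paper's proof: invoke Proposition~\ref{ToptSN2} to get the exponent-2 generalized averaged nonexpansiveness, apply Theorem~\ref{thm_SNconv} (together with the equivalence of norms for $T_3$) to obtain convergence and the $o(k^{-1/2})$ local rate, and then identify the fixed-points with minimizers of the respective objectives. Your version is in fact slightly more explicit than the paper's about how the fixed-point/minimizer correspondence works for $T_1$ and $T_2$, but the argument is the same.
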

\begin{proof}
By Proposition \ref{ToptSN2}, we know that $T_1$ and $T_2$ are both GAN with exponent 2 with respect to $\|\cdot\|_2$, $T_3$ is GAN with exponent 2 with respect to $\|\cdot\|_W$. Then we conclude from Theorem \ref{thm_SNconv} and the equivalence of all norms on $\bRn$ that the fixed-point iterations of $T_1$, $T_2$ and $T_3$ (or Algorithm \eqref{algo1}, \eqref{algo2} and \eqref{algo3}) converge to their fixed-points and $(iv)$ holds. Let $v^*:=\left(\begin{array}{cc}
x^*\\
y^*
\end{array}\right)$ be the fixed-point of $T_3$ that the fixed-point iteration of $T_3$ converges to, where $x^*\in\bRn$, $y^*\in\bRm$. The proof is completed by noticing that the fixed-points of $T_1$ and $T_2$ are minimizers of $f$ and $f+g$ respectively, and $x^*$ is a minimizer of $f+g\circ B+h$.
%
\end{proof}

We comment that we have recovered the local convergence rate $o\left(k^{-\frac{1}{2}}\right)$ of Algorithm \eqref{algo3} previously  obtained in \cite{li2016fast}, by showing that $T_3$ is the generalized averaged nonexpansiveness with exponent $2$.

Based on the convergence rate analysis in previous sections, we are able to obtain further convergence rate results for the fixed-point algorithms \eqref{algo1} and \eqref{algo2}. We first consider the one-dimensional case for Algorithm \eqref{algo1}.

\begin{proposition}\label{thm_graddesc1d}
Suppose that function $f\in\Gamma_0(\bR)$ is differentiable with an $L$-Lipschitz continuous derivative, where $L\in\bR_+$. Then for $\beta\in\left(0,\frac{2}{L}\right)$, the following hold:
\begin{itemize}
\item[$(i)$] $T_1$ is GAN with exponent 1.
\item[$(ii)$] For any initial vector $x^0\in\bRn$, the sequence $\{x^k\}$ generated by Algorithm \eqref{algo1} converges to a minimizer of $f$ with an $o\left(\frac{1}{k}\right)$ local convergence rate.
\end{itemize}
\end{proposition}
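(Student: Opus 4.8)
The plan is to prove the two parts separately, using the one-dimensional structure heavily for part $(i)$. For $(i)$, I want to invoke Lemma~\ref{examplemma}, which says that any nonexpansive and monotonically increasing operator $T:\bR\to\bR$ is GAN with exponent~1. So the task reduces to verifying that $T_1=\mI-\beta\nabla f$ is both nonexpansive and monotonically increasing on $\bR$ when $\beta\in\left(0,\frac{2}{L}\right)$. Nonexpansiveness is immediate from Proposition~\ref{ToptSN2}, since $\beta<\frac{2}{L}$ already guarantees $T_1$ is averaged nonexpansive (hence nonexpansive) with respect to $\|\cdot\|_2$. The monotonicity is the genuinely one-dimensional ingredient: for $t_1>t_2$ I would write
$$
T_1(t_1)-T_1(t_2)=(t_1-t_2)-\beta\left(f'(t_1)-f'(t_2)\right),
$$
and then bound $f'(t_1)-f'(t_2)\leq L(t_1-t_2)$ using the $L$-Lipschitz continuity of $f'$, so that $T_1(t_1)-T_1(t_2)\geq (1-\beta L)(t_1-t_2)$. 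This lower bound is nonnegative only when $\beta\leq\frac{1}{L}$, so a direct argument stalls on the regime $\beta\in\left(\frac{1}{L},\frac{2}{L}\right)$; that is where I expect the main obstacle to lie.

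To handle the full range $\beta\in\left(0,\frac{2}{L}\right)$ I would exploit convexity more carefully rather than just Lipschitzness. Since $f\in\Gamma_0(\bR)$ is convex and differentiable, $f'$ is monotonically nondecreasing, so $f'(t_1)-f'(t_2)\geq0$ for $t_1>t_2$; combined with the cocoercivity (Baillon--Haddad) inequality $(f'(t_1)-f'(t_2))(t_1-t_2)\geq\frac{1}{L}(f'(t_1)-f'(t_2))^2$, I can control the increment of $f'$ from above by $L(t_1-t_2)$. The delicate point is that monotonicity of $T_1$ can fail pointwise when $\beta>\frac{1}{L}$ at individual pairs, yet I still need the GAN inequality with exponent~1. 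The cleanest route is therefore to bypass monotonicity and establish the exponent-1 GAN inequality directly in one dimension: I would set $w:=t_1-t_2$ and $u:=f'(t_1)-f'(t_2)$, note $u$ and $w$ have the same sign with $0\leq uw$ and $u^2\leq Luw$ (cocoercivity), and then verify
$$
|w-\beta u|+\mu\,\beta|u|\leq|w|
$$
for a suitable small $\mu>0$, which is exactly inequality \eqref{neq_SNdef} for $T_1$ with exponent~1. This mirrors the algebra in the proof of Theorem~\ref{thm_TdbLipSN1}, where cocoercivity plus a smallness condition on $\beta$ (here $\beta<\frac{2}{L}$) yields the exponent-1 estimate; I expect the bookkeeping to reduce to choosing $\mu$ so that $\mu\beta L<2-\beta L$ roughly holds.

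For part $(ii)$, the convergence of $\{x^k\}$ to a minimizer of $f$ is already guaranteed by Proposition~\ref{thm_convT123}$(i)$, so only the local rate $o\left(\frac{1}{k}\right)$ is new. Having established in $(i)$ that $T_1$ is GAN with exponent~$\gamma=1$, I would apply Theorem~\ref{thm_SNconv} directly: it gives
$$
\|x^{k+1}-x^k\|=o\left(k^{-\frac{1}{\gamma}}\right)=o\left(k^{-1}\right)=o\left(\frac{1}{k}\right),
$$
which is precisely the claimed local convergence rate. Thus part $(ii)$ is an immediate corollary of part $(i)$ together with the general exponent-$\gamma$ local rate theorem, and carries no additional difficulty. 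The entire weight of the proposition rests on part $(i)$, and specifically on pushing the exponent-1 GAN property through the full parameter range $\beta\in\left(0,\frac{2}{L}\right)$ rather than only the contractive-type subrange $\beta\leq\frac{1}{L}$.
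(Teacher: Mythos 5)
Your final argument is essentially the paper's own proof: the paper likewise sets $w:=x-y$, $v:=\beta(f'(x)-f'(y))$, uses convexity of $f$ to get $wv\geq 0$ and Lipschitz continuity to get $|v|\leq\beta L|w|$, and completes the square to obtain $|w-v|\leq|w|-\mu|v|$ with $\mu=\min\left\{\tfrac{1}{2},\tfrac{2}{\beta L}-1\right\}$, which matches your condition $\mu\beta L\leq 2-\beta L$; part $(ii)$ is handled identically via Theorem~\ref{thm_SNconv}. You were also right to abandon the route through Lemma~\ref{examplemma}, since $T_1$ genuinely fails to be monotonically increasing for $\beta\in\left(\tfrac{1}{L},\tfrac{2}{L}\right)$ (e.g.\ $f(x)=\tfrac{L}{2}x^2$), so the direct exponent-1 computation is the correct and, as it turns out, the paper's approach.
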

\begin{proof}
We first prove $(i)$. By the definition of generalized averaged nonexpansiveness with exponent 1 and the definition of $T_1$, it suffices to show that there exists $\mu\in\bR_{+}$ such that for all $x,y\in\bR$,
\begin{equation}\label{ineqwvSN1}
|(x-y)-\beta(f'(x)-f'(y))|+\mu|\beta(f'(x)-f'(y))|\leq|x-y|.
\end{equation}
Let $w:=x-y$, $v:=\beta(f'(x)-f'(y))$ and $\mu=\min\left\{\frac{1}{2},\frac{2}{\beta L}-1\right\}$. Then $\mu\in(0,1)$ and $L\leq\frac{2}{\beta(1+\mu)}$. It follows from the $L$-Lipschitz continuity of $f'$ that
\begin{equation}\label{ineq:vleqw}
|v|\leq\beta L|w|\leq\frac{2}{1+\mu}|w|.
\end{equation}
The convexity of $f$ implies that $f'$ is monotonically increasing, and hence $wv\geq0$. Multiplying $(1-\mu^2)|v|$ on both sides of \eqref{ineq:vleqw}, we obtain that
$$
(1-\mu^2)v^2\leq2(1-\mu)wv,
$$
which implies that
$$
v^2-2wv+w^2\leq w^2-2\mu wv+\mu^2v^2,
$$
that is,
\begin{equation}\label{ineqwvsquare}
(w-v)^2\leq(|w|-\mu|v|)^2.
\end{equation}
By \eqref{ineq:vleqw} and the fact that $\mu\in(0,1)$, it is easy to see that $\mu|v|\leq|w|$. Hence \eqref{ineqwvsquare} is equivalent to $|w-v|\leq|w|-\mu|v|$, that is, \eqref{ineqwvSN1} holds, and hence $T_1$ is GAN with exponent 1.

Now we employ $(i)$ and Theorem \ref{thm_SNconv} to prove $(ii)$. The convergence of $\{x^k\}$ to a minimizer of $f$ has been shown in Proposition \ref{thm_convT123} $(i)$. Since $T_1$ is GAN with exponent 1, the $o\left(\frac{1}{k}\right)$ local convergence rate of its fixed-point iteration follows from Theorem \ref{thm_SNconv} immediately.
\end{proof}

In Proposition \ref{thm_graddesc1d}, we have shown the generalized averaged nonexpansiveness with exponent 1 of $T_1$ and the convergence rate of Algorithm \eqref{algo1} in one-dimensional case. We next consider the higher-dimensional case.

In fact, we are able to show that $T_1$ is both GAN with exponent 1 and HR with exponent 1 under appropriate assumptions, which leads to an exponential global convergence rate for Algorithm \eqref{algo1} by Theorem \ref{thm_SNandBHR}. To establish this result, we recall the Baillon-Haddad theorem \cite{baillon1977quelques}.

\begin{lemma}\label{BHthm}
Suppose that $\psi:\bRn\to\bR$ is a differentiable convex function. Then $\nabla\psi$ is $L$-Lipschitz  with respect to $\|\cdot\|$ for some $L\in\bR_+$ if and only if
$$
\|\nabla\psi(x)-\nabla\psi(y)\|^2\leq L\langle x-y,\nabla\psi(x)-\nabla\psi(y)\rangle,\ \ \mbox{for all} \ \ x,y\in\bRn.
$$
\end{lemma}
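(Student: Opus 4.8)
The plan is to prove the two implications separately, since the statement is a biconditional. The reverse implication (the cocoercivity-type inequality implies $L$-Lipschitz continuity) is the routine direction and I would dispatch it first. The forward implication (Lipschitz continuity implies the inequality) is the genuine content of the Baillon--Haddad theorem and will require an auxiliary-function argument together with a quadratic upper bound (the ``descent lemma'').

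For the reverse direction, suppose $\|\nabla\psi(x)-\nabla\psi(y)\|^2\leq L\langle x-y,\nabla\psi(x)-\nabla\psi(y)\rangle$ holds for all $x,y$. Applying the Cauchy--Schwarz inequality to the weighted inner product on the right-hand side yields
$$
\|\nabla\psi(x)-\nabla\psi(y)\|^2\leq L\|x-y\|\,\|\nabla\psi(x)-\nabla\psi(y)\|.
$$
If $\nabla\psi(x)=\nabla\psi(y)$ the Lipschitz bound is trivial; otherwise dividing by $\|\nabla\psi(x)-\nabla\psi(y)\|$ gives $\|\nabla\psi(x)-\nabla\psi(y)\|\leq L\|x-y\|$, as required. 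Note that Cauchy--Schwarz is valid for the weighted inner product because it is a genuine inner product, so no adjustment for the weight matrix is needed.

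For the forward direction, fix $y\in\bRn$ and introduce the auxiliary function $\phi_y(x):=\psi(x)-\langle\nabla\psi(y),x\rangle$. The key observations are that $\phi_y$ is convex and differentiable with $\nabla\phi_y(x)=\nabla\psi(x)-\nabla\psi(y)$, that this gradient is again $L$-Lipschitz, and that $\phi_y$ attains its global minimum at $x=y$ because $\nabla\phi_y(y)=0$ and $\phi_y$ is convex. I would then establish the descent lemma for a function with $L$-Lipschitz gradient, namely $\phi_y(u)\leq\phi_y(v)+\langle\nabla\phi_y(v),u-v\rangle+\frac{L}{2}\|u-v\|^2$, by writing $\phi_y(u)-\phi_y(v)$ as an integral of $\langle\nabla\phi_y(v+t(u-v)),u-v\rangle$ over $t\in[0,1]$ and bounding the integrand via Cauchy--Schwarz and the Lipschitz property. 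Minimizing the right-hand side of the descent lemma over $u$ (the minimizer is $u=v-\frac{1}{L}\nabla\phi_y(v)$, computed with respect to the weighted inner product) and using $\phi_y(y)=\min_u\phi_y(u)$ produces
$$
\phi_y(y)\leq\phi_y(v)-\frac{1}{2L}\|\nabla\phi_y(v)\|^2.
$$
Setting $v=x$, substituting $\nabla\phi_y(x)=\nabla\psi(x)-\nabla\psi(y)$ and unwinding the definition of $\phi_y$ gives $\frac{1}{2L}\|\nabla\psi(x)-\nabla\psi(y)\|^2\leq\psi(x)-\psi(y)-\langle\nabla\psi(y),x-y\rangle$. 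Adding this to the inequality obtained by interchanging $x$ and $y$ cancels the function values and yields exactly $\|\nabla\psi(x)-\nabla\psi(y)\|^2\leq L\langle x-y,\nabla\psi(x)-\nabla\psi(y)\rangle$.

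The main obstacle is the forward direction, and within it the delicate point is the minimization step carried out with respect to the weighted norm rather than the Euclidean norm: the quadratic $\langle\nabla\phi_y(v),u-v\rangle+\frac{L}{2}\|u-v\|^2$ must be minimized using the inner product that defines $\|\cdot\|$, so that the minimizer and the resulting value $-\frac{1}{2L}\|\nabla\phi_y(v)\|^2$ come out in terms of the same weighted norm appearing in the statement. Because both the descent lemma and this minimization rely only on Cauchy--Schwarz and the self-duality of a norm induced by an inner product, the entire argument goes through verbatim for the weighted norm, so no genuinely new difficulty arises beyond careful bookkeeping of the weight.
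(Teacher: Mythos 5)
Your proof is correct, but note that the paper does not actually prove this lemma: it is recalled verbatim as the Baillon--Haddad theorem with a citation to \cite{baillon1977quelques}, so there is no in-paper argument to compare against. What you have written is the standard self-contained proof of that classical result: the easy direction via Cauchy--Schwarz, and the substantive direction via the auxiliary function $\phi_y(x)=\psi(x)-\langle\nabla\psi(y),x\rangle$, the descent lemma, minimization of the quadratic upper bound to get $\tfrac{1}{2L}\|\nabla\psi(x)-\nabla\psi(y)\|^2\leq\psi(x)-\psi(y)-\langle\nabla\psi(y),x-y\rangle$, and symmetrization. All steps check out. Your closing remark about the weighted inner product is the one genuinely delicate point, and you handle it correctly: for the lemma to be true as stated with a weighted norm, $\nabla\psi$ must be understood as the gradient with respect to that same inner product (otherwise $\nabla\phi_y(x)=\nabla\psi(x)-\nabla\psi(y)$ and the identity $\nabla\phi_y(y)=0$ would fail), and with that consistent reading the descent lemma and the minimization both go through because they use only Cauchy--Schwarz for the inducing inner product. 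So your proposal supplies a complete proof where the paper supplies only a reference; nothing further is needed.
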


\begin{theorem}\label{thm_IgradSN1}
Let $f\in\Gamma_0(\bRn)$ be differentiable. If there exist $L_1\geq L_2>0$ such that
\begin{equation}\label{doubleLipcont}
L_2\|x-y\|\leq\|\nabla f(x)-\nabla f(y)\|\leq L_1\|x-y\|,\ \ \mbox{for all}\ \ x,y\in\bRn,
\end{equation}
then for $\beta\in\left(0,\frac{2}{L_1}\right)$, the following hold:
\begin{itemize}
\item[$(i)$] $T_1$ is both GAN with exponent 1 and HR with exponent 1.
\item[$(ii)$] For any initial vector $x^0\in\bRn$, the sequence $\{x^k\}$ generated by Algorithm \eqref{algo1} converges to a minimizer of $f$ with an exponential global convergence rate.
\end{itemize}
\end{theorem}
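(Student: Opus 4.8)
The plan is to derive both structural properties of $T_1$ claimed in $(i)$ from results already established and then feed them into Theorem~\ref{thm_SNandBHR} to obtain $(ii)$. The key observation is the factorization
$T_1 = \mI - (\beta L_1)\bigl(\tfrac{1}{L_1}\nabla f\bigr)$,
so that $S := \tfrac{1}{L_1}\nabla f$ is the natural candidate to play the role of the firmly nonexpansive operator in Theorem~\ref{thm_TdbLipSN1}.

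First I would verify that $S$ is firmly nonexpansive. Since $\nabla f$ is $L_1$-Lipschitz, the Baillon--Haddad theorem (Lemma~\ref{BHthm}) gives $\|\nabla f(x)-\nabla f(y)\|^2 \leq L_1\langle x-y,\nabla f(x)-\nabla f(y)\rangle$; dividing by $L_1^2$ yields $\|Sx-Sy\|^2 \leq \langle Sx-Sy,x-y\rangle$, which is firm nonexpansiveness. Next, the lower bound in \eqref{doubleLipcont} gives $\|Sx-Sy\| = \tfrac{1}{L_1}\|\nabla f(x)-\nabla f(y)\| \geq \tfrac{L_2}{L_1}\|x-y\|$, so $S$ satisfies \eqref{InvLip} with $\alpha := L_2/L_1 \in (0,1]$. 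Because $\beta \in (0,2/L_1)$ forces $\beta L_1 \in (0,2)$, Theorem~\ref{thm_TdbLipSN1} applies to $S$ with the scaling parameter taken to be $\beta L_1$, and concludes that $T_1 = \mI - (\beta L_1)S$ is GAN with exponent $1$.

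For Hölder regularity I would first exploit the lower bound in \eqref{doubleLipcont} again: it makes $\nabla f$ injective, so $f$ has a unique minimizer $x^*$, and by Fermat's rule $\Fix(T_1) = \{x:\nabla f(x)=0\} = \{x^*\}$, whence $d(x,\Fix(T_1)) = \|x-x^*\|$. Since $\nabla f(x^*)=0$, I then estimate $\|x-T_1x\| = \beta\|\nabla f(x)\| = \beta\|\nabla f(x)-\nabla f(x^*)\| \geq \beta L_2\|x-x^*\|$, which rearranges to $d(x,\Fix(T_1)) \leq \tfrac{1}{\beta L_2}\|x-T_1x\|$. This is precisely HR with exponent $1$ (with constant $1/(\beta L_2)$), completing $(i)$.

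For $(ii)$, convergence of $\{x^k\}$ to a fixed point of $T_1$, equivalently to the minimizer $x^*$ of $f$, already follows from Proposition~\ref{thm_convT123}$(i)$ with $L$ taken as $L_1$. Having shown in $(i)$ that $T_1$ is GAN with exponent $\gamma_1=1$ and HR with exponent $\gamma_2=1$, I would invoke Theorem~\ref{thm_SNandBHR} in the regime $\gamma_2\geq 1$, which delivers the exponential global rate $\|x^k-x^*\| = O(\rho^k)$ for some $\rho\in(0,1)$. The only mild obstacle is the bookkeeping of the two Lipschitz-type constants: one must keep the scaling $\beta L_1\in(0,2)$ intact when applying Theorem~\ref{thm_TdbLipSN1}, and recognize that the \emph{lower} constant $L_2$ is what simultaneously supplies the inverse-Lipschitz bound \eqref{InvLip} and the Hölder-regularity estimate; all remaining steps are a direct assembly of earlier results.
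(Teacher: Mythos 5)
Your proposal is correct and follows essentially the same route as the paper: the same factorization $T_1=\mI-(\beta L_1)\bigl(\tfrac{1}{L_1}\nabla f\bigr)$ combined with the Baillon--Haddad theorem and Theorem~\ref{thm_TdbLipSN1} for the GAN part, the same lower-Lipschitz estimate $\|x-T_1x\|=\beta\|\nabla f(x)\|\geq\beta L_2\,d(x,\Fix(T_1))$ for H\"older regularity, and the same invocation of Proposition~\ref{thm_convT123}$(i)$ and Theorem~\ref{thm_SNandBHR} for part $(ii)$. The only difference is cosmetic: you explicitly note that the lower bound forces a unique minimizer, whereas the paper works directly with an arbitrary fixed point $\hx$.
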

\begin{proof}
We first prove the generalized averaged nonexpansiveness of $T_1$ by employing Theorem \ref{thm_TdbLipSN1}. Let $T:=\frac{1}{L_1}\nabla f$. It follows from the second inequality of \eqref{doubleLipcont} and Lemma \ref{BHthm} that
$$
\|Tx-Ty\|^2\leq\langle x-y,Tx-Ty\rangle,\ \ \mbox{for all}\ \ x,y\in\bRn,
$$
that is, $T$ is firmly nonexpansive. By the first inequality of \eqref{doubleLipcont}, we have
$$
\|Tx-Ty\|\geq\frac{L_2}{L_1}\|x-y\|,
$$
where $\frac{L_2}{L_1}\in(0,1]$. Since $T_1=\mI-\beta L_1 T$ and $\beta L_1\in\left(0,2\right)$, the generalized averaged nonexpansiveness with exponent 1 of $T_1$ follows from  Theorem \ref{thm_TdbLipSN1} immediately.

We next show the H$\ddot{\text{o}}$lder regularity of $T_1$. Let $\mu=\frac{1}{\beta L_2}$. Since $f:\bRn\to\bR$ is differentiable, by Fermat's lemma \cite{zorich2015Mathematical}, we know that $\nabla f(\hx)={\bm0}$ for any $\hx\in\Fix(T_1)$. Now using the first inequality of \eqref{doubleLipcont}, for any $x\in\bRn$, $\hx\in\Fix(T_1)$,
$$
\|x-\hx\|\leq\frac{1}{L_2}\|\nabla f(x)-\nabla f(\hx)\|=\mu\|\beta\nabla f(x)\|=\mu\|x-T_1x\|,
$$
which implies that $d(x,\Fix(T_1))\leq\mu\|x-T_1x\|$. Thus, $T_1$ is HR with exponent 1.

Now we employ $(i)$ and Theorem \ref{thm_SNandBHR} to prove $(ii)$. The convergence of $\{x^k\}$ to a minimizer of $f$ has been shown in Proposition \ref{thm_convT123} $(i)$. Since $T_1$ is both GAN with exponent 1 and HR with exponent 1, $(ii)$ follows from Theorem \ref{thm_SNandBHR} immediately.
\end{proof}

We next provide an example whose objective function satisfies \eqref{doubleLipcont}.
\begin{corollary}\label{LSexample}
Suppose function $f:\bRn\to\bR$ is defined by $f(x):=\frac{1}{2}\|Ax-b\|_2^2$, where $A\in\bR^{m\times n}$ is a full column rank matrix, $b\in\bRm$. Then for any initial vector $x^0\in\bRn$, the sequence $\{x^k\}$ generated by Algorithm \eqref{algo1} converges to the minimizer of $f$ with an exponential global convergence rate for $\beta\in\left(0,\frac{2}{L}\right)$, where $L$ is the maximum eigenvalue of $A^\top A$.
\end{corollary}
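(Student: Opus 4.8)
The plan is to reduce the claim to Theorem \ref{thm_IgradSN1} by verifying that the least-squares objective $f$ satisfies the two-sided Lipschitz condition \eqref{doubleLipcont}. First I would compute the gradient explicitly. Since $f(x)=\frac{1}{2}\|Ax-b\|_2^2$, we have $\nabla f(x)=A^\top(Ax-b)=A^\top Ax-A^\top b$, and therefore
$$
\nabla f(x)-\nabla f(y)=A^\top A(x-y),\ \ \mbox{for all}\ \ x,y\in\bRn.
$$

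Next I would exploit the spectral structure of the matrix $M:=A^\top A$. Because $A$ has full column rank, $M$ is symmetric positive definite, so its eigenvalues satisfy $0<\lambda_{\min}(M)\leq\cdots\leq\lambda_{\max}(M)=L$. Diagonalizing $M$ in an orthonormal eigenbasis gives the elementary two-sided estimate
$$
\lambda_{\min}(M)\|v\|_2\leq\|Mv\|_2\leq\lambda_{\max}(M)\|v\|_2,\ \ \mbox{for all}\ \ v\in\bRn.
$$
Applying this with $v:=x-y$ and setting $L_1:=\lambda_{\max}(M)=L$ and $L_2:=\lambda_{\min}(M)$, I obtain precisely \eqref{doubleLipcont} with constants satisfying $L_1\geq L_2>0$.

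With \eqref{doubleLipcont} in hand, the corollary follows at once from Theorem \ref{thm_IgradSN1}: for $\beta\in\left(0,\frac{2}{L_1}\right)=\left(0,\frac{2}{L}\right)$, the operator $T_1=\mI-\beta\nabla f$ is both GAN with exponent $1$ and HR with exponent $1$, so the Picard sequence $\{x^k\}$ of Algorithm \eqref{algo1} converges to a minimizer of $f$ with an exponential global convergence rate; since full column rank makes $f$ strongly convex, this minimizer is in fact unique. I do not expect a genuine obstacle, as the argument is a direct specialization. The only point demanding care is the lower bound in \eqref{doubleLipcont}: it is exactly the full column rank hypothesis that forces $\lambda_{\min}(A^\top A)>0$, allowing $L_2$ to be chosen strictly positive. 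This strict positivity is indispensable, since it is what underlies the H\"older regularity of $T_1$ and hence the exponential (rather than merely polynomial) convergence rate.
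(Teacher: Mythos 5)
Your proposal is correct and follows essentially the same route as the paper: verify the two-sided bound \eqref{doubleLipcont} for $\nabla f(x)-\nabla f(y)=A^\top A(x-y)$ via the spectral decomposition of $A^\top A$ (the paper phrases this through the eigenvalues of $(A^\top A)^\top(A^\top A)$ and takes square roots, which is the same estimate), then invoke Theorem \ref{thm_IgradSN1}. The only cosmetic difference is your direct use of $\lambda_{\min}(A^\top A)$ and $\lambda_{\max}(A^\top A)$ rather than the square roots of the eigenvalues of $(A^\top A)^2$.
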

\begin{proof}
It is easy to verify that $f\in\Gamma_0(\bRn)$ and it is differentiable. The fact that $A$ has full column rank implies the positive definiteness of the Hessian matrix $H:=A^\top A$ of $f$. Hence $f$ is strictly convex and has a unique minimizer. According to Theorem \ref{thm_IgradSN1} , to prove this corollary, it suffices to show that there exist $L_1\geq L_2>0$ such that \eqref{doubleLipcont} holds. By the definition of $f$,
$$
\|\nabla f(x)-\nabla f(y)\|_2^2=z^\top H^\top Hz,
$$
where $z:=x-y$. Of course, $H^\top H\in\bRnn$ is positive definite. Let $0<\lambda_1\leq\lambda_2\leq\cdots\leq\lambda_n$ be the $n$ eigenvalues of $H^\top H$. Then $H^\top H-\lambda_1 I$ and $\lambda_n I-H^\top H$ are both positive semi-definite, which implies that
$$
\lambda_1\|z\|_2^2\leq\|Hz\|_2^2\leq\lambda_n\|z\|_2^2,
$$
that is, \eqref{doubleLipcont} holds by setting $L_2=\sqrt{\lambda_1}$ and $L_1=\sqrt{\lambda_n}$. Therefore, the desired result of this corollary follows from Theorem \ref{thm_IgradSN1} $(ii)$ immediately.
\end{proof}

To close this section, we present a local convergence rate for Algorithm \eqref{algo2}. Note that when the $\ell_2$ norm in the definition of generalized averaged nonexpansiveness is replaced by the $\ell_1$ norm (generalized averaged nonexpansiveness with respect to $\ell_1$ norm), Proposition \ref{prop_SNcompos} and Theorem \ref{thm_SNconv} still hold with the $\ell_2$ norms in them is replaced by the $\ell_1$ norms. Moreover, we have the following theorem. 

\begin{theorem}\label{thm_T2SN1}
Suppose that for $i\in\bN_n$, $f_i\in\Gamma_0(\bR)$ is differentiable with an $L_i$-Lipschitz continuous derivative, for some $L_i\in\bR_+$. If function $f:\bRn\to\bR$ is given by
$$
f(x):=f_1(x_1)+f_2(x_2)+\cdots+f_n(x_n),
$$
$g:=\lambda\|\cdot\|_1$, for $\lambda\in[0,\infty)$, and $\beta\in\left(0,\frac{2}{\max_{i\in\bN_n}\{L_i\}}\right)$, then the following statements hold:
\begin{itemize}
\item[$(i)$] Operator $T_2$ is GAN with exponent 1 with respect to $\|\cdot\|_1$.
\item[$(ii)$] For any initial vector $x^0\in\bRn$, the sequence $\{x^k\}$ generated by Algorithm \eqref{algo2} converges to a minimizer of $f+g$ with a local convergence rate $o\left(\frac{1}{k}\right)$ with respect to $\|\cdot\|_1$.
\end{itemize}
\end{theorem}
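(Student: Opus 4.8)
The plan is to exploit the separable structure of both $f$ and $g$ so that $T_2$ decouples into $n$ independent one-dimensional operators, each shown to be GAN with exponent $1$ by assembling results already established, and then to recover the $\|\cdot\|_1$ statement by summing over coordinates.

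First I would record the coordinate-wise action of $T_2$. Since $f(x)=\sum_{i=1}^n f_i(x_i)$, the gradient $\nabla f(x)=(f_1'(x_1),\dots,f_n'(x_n))^\top$ acts coordinate-wise, and since $\beta g=\beta\lambda\|\cdot\|_1=\sum_{i=1}^n\beta\lambda|x_i|$, the proximity operator separates, giving $(\prox_{\beta g}(x))_i=\prox_{\beta\lambda|\cdot|}(x_i)$. Hence $(T_2x)_i=S_i(x_i)$ with
$$
S_i:=\prox_{\beta\lambda|\cdot|}\circ(\mI-\beta f_i').
$$
With this decoupling the desired $\|\cdot\|_1$-GAN inequality for $T_2$ is obtained simply by summing the one-dimensional inequalities $|S_i(s)-S_i(t)|+\mu|(s-t)-(S_i(s)-S_i(t))|\leq|s-t|$ over $i\in\bN_n$, once a single constant $\mu$ is shown to serve all $i$.

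Next I would prove that each $S_i$ is GAN with exponent $1$. Because $\beta<\frac{2}{\max_i L_i}\leq\frac{2}{L_i}$, Proposition \ref{thm_graddesc1d}$(i)$ gives that $\mI-\beta f_i'$ is GAN with exponent $1$, and Example \ref{SNproxabs} (applied with $\beta\lambda$ in place of $\lambda$, or trivially as the identity when $\lambda=0$) gives that $\prox_{\beta\lambda|\cdot|}$ is GAN with exponent $1$. Since on $\bR$ the norm in Definition \ref{def_SN} is just the absolute value, the composition result Proposition \ref{prop_SNcompos}, which for $\gamma=1$ produces the GAN constant $\min\{\mu_1,\mu_2\}$, shows each $S_i$ is $\mu_i$-GAN with exponent $1$ for some $\mu_i>0$. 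Setting $\mu:=\min_{i\in\bN_n}\mu_i$, which is positive because there are finitely many coordinates, and invoking the monotonicity of the GAN property in $\mu$ noted after Definition \ref{def_SN}, every $S_i$ is $\mu$-GAN with exponent $1$; summing over $i$ then yields $(i)$.

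Finally, for $(ii)$ convergence of $\{x^k\}$ to a minimizer of $f+g$ follows from Proposition \ref{thm_convT123}$(ii)$, after noting that $\nabla f$ is $\max_i L_i$-Lipschitz with respect to $\|\cdot\|_2$, so the hypothesis $\beta<\frac{2}{\max_i L_i}$ is exactly the required step-size bound. The rate then comes from part $(i)$ and the $\ell_1$-norm version of Theorem \ref{thm_SNconv}, which with exponent $\gamma=1$ gives $\|x^{k+1}-x^k\|_1=o(k^{-1})=o(1/k)$. I expect the only point requiring genuine care to be the bookkeeping that delivers one GAN constant $\mu$ valid across all coordinates---that is, checking that the composition constants $\mu_i$ remain bounded below by a positive number---rather than any isolated hard estimate, since every per-factor inequality is already furnished by the earlier results.
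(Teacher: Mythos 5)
Your proposal is correct and follows essentially the same route as the paper: both rest on Example \ref{SNproxabs}, Proposition \ref{thm_graddesc1d}$(i)$, and Proposition \ref{prop_SNcompos}, combined with summation over coordinates to pass to the $\ell_1$ norm. The only (immaterial) difference is the order of operations --- you compose the two one-dimensional factors first and then sum over coordinates with a uniform $\mu=\min_i\mu_i$, whereas the paper first lifts each factor to an $\ell_1$-GAN operator on $\bRn$ and then applies the $\ell_1$-norm version of the composition result; your explicit attention to the uniform constant is a point the paper leaves implicit.
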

\begin{proof}
We first prove $(i)$. By Example \ref{SNproxabs} and Proposition \ref{thm_graddesc1d} $(i)$, we know that both $\prox_{\beta\lambda|\cdot|}$ and $\mI-\beta f_i'$ are GAN with exponent 1. This implies that both $\prox_{\beta g}$ and $\mI-\beta\nabla f$ are GAN with exponent 1 with respect to $\ell_1$ norm. Then, by the $\ell_1$ norm version of Proposition \ref{prop_SNcompos}, $T_2$ is GAN with exponent 1 with respect to $\ell_1$ norm.

Now we conclude from $(i)$ and the $\ell_1$ norm version of Theorem \ref{thm_SNconv} that the fixed-point iteration of $T_2$ converges to a minimizer of $f+g$ with the convergence rate $o\left(\frac{1}{k}\right)$ in terms of $\|x^{k+1}-x^k\|_1$, which completes the proof of $(ii)$.
\end{proof}

Theorem \ref{thm_T2SN1} establishes the local convergence rate $o\left(\frac{1}{k}\right)$ with respect to $\|\cdot\|_1$ for Algorithm \eqref{algo2} by employing the generalized averaged nonexpansiveness with exponent 1 with respect to $\ell_1$ norm. The same local convergence rate with respect to an inner product norm for Algorithm \eqref{algo2} has been shown in Theorem 3 of \cite{davis2016convergence}.

\section{Conclusions}
We have introduced the notion of the generalized averaged nonexpansive (GAN) operator, which allows us to study convergence and convergence rates of fixed-point iterations of GAN operators not covered by the existing theory of the averaged nonexpansive operators. The introduced notion provides a unified approach for analyzing the convergence and convergence rates of convex optimization algorithms. The convergence rate results of optimization algorithms obtained from this approach cover existing understanding and lead to new findings.


\bibliographystyle{siamplain}
\bibliography{references}

\begin{thebibliography}{10}

\bibitem{agarwal2001fixed}
{\sc R.~P. Agarwal, M.~Meehan, and D.~O'regan}, {\em Fixed point theory and
  applications}, vol.~141, Cambridge University Press, Cambridge, 2001.

\bibitem{ahn2003globally}
{\sc S.~Ahn and J.~A. Fessler}, {\em Globally convergent image reconstruction
  for emission tomography using relaxed ordered subsets algorithms}, IEEE
  Transactions on Medical Imaging, 22 (2003), pp.~613--626.

\bibitem{baillion1996rate}
{\sc J.~Baillion and R.~E. Bruck}, {\em The rate of asymptotic regularity is o
  (1/$\sqrt{n}$)}, Lecture Notes in Pure and Applied Mathematics,  (1996),
  pp.~51--82.

\bibitem{baillon1977quelques}
{\sc J.-B. Baillon and G.~Haddad}, {\em Quelques propri{\'e}t{\'e}s des
  op{\'e}rateurs angle-born{\'e}s etn-cycliquement monotones}, Israel Journal
  of Mathematics, 26 (1977), pp.~137--150.

\bibitem{bauschke2017convex}
{\sc H.~H. Bauschke and P.~L. Combettes}, {\em Convex Analysis and Monotone
  Operator Theory in {Hilbert} Space}, Springer, New York, 2nd~ed., 2017.

\bibitem{beck2009fast}
{\sc A.~Beck and M.~Teboulle}, {\em A fast iterative shrinkage-thresholding
  algorithm for linear inverse problems}, SIAM Journal on Imaging Sciences, 2
  (2009), pp.~183--202.

\bibitem{bertsekas1976goldstein}
{\sc D.~P. Bertsekas}, {\em On the goldstein-levitin-polyak gradient projection
  method}, IEEE Transactions on Automatic Control, 21 (1976), pp.~174--184.

\bibitem{bertsekas2015convex}
{\sc D.~P. Bertsekas}, {\em Convex Optimization Algorithms}, Athena Scientific,
  Belmont, Massachusetts, 2015.

\bibitem{borwein2017convergence}
{\sc J.~M. Borwein, G.~Li, and M.~K. Tam}, {\em Convergence rate analysis for
  averaged fixed point iterations in common fixed point problems}, SIAM Journal
  on Optimization, 27 (2017), pp.~1--33.

\bibitem{borwein2014analysis}
{\sc J.~M. Borwein, G.~Li, and L.~Yao}, {\em Analysis of the convergence rate
  for the cyclic projection algorithm applied to basic semialgebraic convex
  sets}, SIAM Journal on Optimization, 24 (2014), pp.~498--527.

\bibitem{browder1967construction}
{\sc F.~E. Browder and W.~V. Petryshyn}, {\em Construction of fixed points of
  nonlinear mappings in {Hilbert} space}, Journal of Mathematical Analysis and
  Applications, 20 (1967), pp.~197--228.

\bibitem{cai2008framelet}
{\sc J.-F. Cai, R.~H. Chan, and Z.~Shen}, {\em A framelet-based image
  inpainting algorithm}, Applied and Computational Harmonic Analysis, 24
  (2008), pp.~131--149.

\bibitem{chan2003wavelet}
{\sc R.~H. Chan, T.~F. Chan, L.~Shen, and Z.~Shen}, {\em Wavelet algorithms for
  high-resolution image reconstruction}, SIAM Journal on Scientific Computing,
  24 (2003), pp.~1408--1432.

\bibitem{chen2015convergence}
{\sc B.~Chen, J.~Wang, H.~Zhao, N.~Zheng, and J.~C. Pr{\'\i}ncipe}, {\em
  Convergence of a fixed-point algorithm under maximum correntropy criterion},
  IEEE Signal Processing Letters, 22 (2015), pp.~1723--1727.

\bibitem{chen1997convergence}
{\sc G.~H. Chen and R.~T. Rockafellar}, {\em Convergence rates in
  forward-backward splitting}, SIAM Journal on Optimization, 7 (1997),
  pp.~421--444.

\bibitem{chen2013primal}
{\sc P.~Chen, J.~Huang, and X.~Zhang}, {\em A primal--dual fixed point
  algorithm for convex separable minimization with applications to image
  restoration}, Inverse Problems, 29 (2013), p.~025011.

\bibitem{combettes2005signal}
{\sc P.~L. Combettes and V.~R. Wajs}, {\em Signal recovery by proximal
  forward-backward splitting}, Multiscale Modeling \& Simulation, 4 (2005),
  pp.~1168--1200.

\bibitem{davis2016convergence}
{\sc D.~Davis and W.~Yin}, {\em Convergence rate analysis of several splitting
  schemes}, in Splitting Methods in Communication, Imaging, Science, and
  Engineering, Springer, New York, 2016, pp.~115--163.

\bibitem{fessler1994penalized}
{\sc J.~A. Fessler}, {\em Penalized weighted least-squares image reconstruction
  for positron emission tomography}, IEEE Transactions on Medical Imaging, 13
  (1994), pp.~290--300.

\bibitem{figueiredo2003algorithm}
{\sc M.~A. Figueiredo and R.~D. Nowak}, {\em An {EM} algorithm for
  wavelet-based image restoration}, IEEE Transactions on Image Processing, 12
  (2003), pp.~906--916.

\bibitem{figueiredo2007gradient}
{\sc M.~A. Figueiredo, R.~D. Nowak, and S.~J. Wright}, {\em Gradient projection
  for sparse reconstruction: Application to compressed sensing and other
  inverse problems}, IEEE Journal of Selected Topics in Signal Processing, 1
  (2007), pp.~586--597.

\bibitem{hicks1977mann}
{\sc T.~L. Hicks and J.~D. Kubicek}, {\em On the {Mann} iteration process in a
  {Hilbert} space}, Journal of Mathematical Analysis and Applications, 59
  (1977), pp.~498--504.

\bibitem{komodakis2015playing}
{\sc N.~Komodakis and J.-C. Pesquet}, {\em Playing with duality: An overview of
  recent primal-dual approaches for solving large-scale optimization problems},
  IEEE Signal Processing Magazine, 32 (2015), pp.~31--54.

\bibitem{krol2012preconditioned}
{\sc A.~Krol, S.~Li, L.~Shen, and Y.~Xu}, {\em Preconditioned alternating
  projection algorithms for maximum a posteriori {ECT} reconstruction}, Inverse
  Problems, 28 (2012), p.~115005.

\bibitem{li2015multi}
{\sc Q.~Li, L.~Shen, Y.~Xu, and N.~Zhang}, {\em Multi-step fixed-point
  proximity algorithms for solving a class of optimization problems arising
  from image processing}, Advances in Computational Mathematics, 41 (2015),
  pp.~387--422.

\bibitem{li2016fast}
{\sc Q.~Li and N.~Zhang}, {\em Fast proximity-gradient algorithms for
  structured convex optimization problems}, Applied and Computational Harmonic
  Analysis, 41 (2016), pp.~491--517.

\bibitem{li2018fixed}
{\sc Z.~Li, G.~Song, and Y.~Xu}, {\em A fixed-point proximity approach to
  solving the support vector regression with the group {Lasso} regularization},
  International Journal of Numerical Analysis and Modeling, 15 (2018),
  pp.~154--169.

\bibitem{li2019two}
{\sc Z.~Li, G.~Song, and Y.~Xu}, {\em A two-step fixed-point proximity
  algorithm for a class of non-differentiable optimization models in machine
  learning}, Journal of Scientific Computing, 81 (2019), pp.~923--940.

\bibitem{lin2019krasnoselskii}
{\sc Y.~Lin, C.~R. Schmidtlein, Q.~Li, S.~Li, and Y.~Xu}, {\em A
  {Krasnoselskii-Mann} algorithm with an improved {EM} preconditioner for {PET}
  image reconstruction}, IEEE Transactions on Medical Imaging, 38 (2019),
  pp.~2114--2126.

\bibitem{lu2016multiplicative}
{\sc J.~Lu, L.~Shen, C.~Xu, and Y.~Xu}, {\em Multiplicative noise removal in
  imaging: An exp-model and its fixed-point proximity algorithm}, Applied and
  Computational Harmonic Analysis, 41 (2016), pp.~518--539.

\bibitem{muarucster2011strong}
{\sc L.~M{\u{a}}ru{\c{s}}ter and {\c{S}}.~M{\u{a}}ru{\c{s}}ter}, {\em Strong
  convergence of the {Mann} iteration for $\alpha$-demicontractive mappings},
  Mathematical and Computer Modelling, 54 (2011), pp.~2486--2492.

\bibitem{micchelli2011proximity}
{\sc C.~A. Micchelli, L.~Shen, and Y.~Xu}, {\em Proximity algorithms for image
  models: denoising}, Inverse Problems, 27 (2011), p.~045009.

\bibitem{micchelli2013proximity}
{\sc C.~A. Micchelli, L.~Shen, Y.~Xu, and X.~Zeng}, {\em Proximity algorithms
  for the {L1/TV} image denoising model}, Advances in Computational
  Mathematics, 38 (2013), pp.~401--426.

\bibitem{moreau1962fonctions}
{\sc J.~J. Moreau}, {\em Fonctions convexes duales et points proximaux dans un
  espace hilbertien}, Comptes rendus hebdomadaires des s{\'e}ances de
  l'Acad{\'e}mie des sciences, 255 (1962), pp.~2897--2899.

\bibitem{parikh2014proximal}
{\sc N.~Parikh and S.~Boyd}, {\em Proximal algorithms}, Foundations and Trends
  in Optimization, 1 (2014), pp.~127--239.

\bibitem{polson2015proximal}
{\sc N.~G. Polson, J.~G. Scott, and B.~T. Willard}, {\em Proximal algorithms in
  statistics and machine learning}, Statistical Science, 30 (2015),
  pp.~559--581.

\bibitem{rockafellar1976monotone}
{\sc R.~T. Rockafellar}, {\em Monotone operators and the proximal point
  algorithm}, SIAM Journal on Control and Optimization, 14 (1976),
  pp.~877--898.

\bibitem{ross2017relaxed}
{\sc C.~Ross~Schmidtlein, Y.~Lin, S.~Li, A.~Krol, B.~J. Beattie, J.~L. Humm,
  and Y.~Xu}, {\em Relaxed ordered subset preconditioned alternating projection
  algorithm for {PET} reconstruction with automated penalty weight selection},
  Medical Physics, 44 (2017), pp.~4083--4097.

\bibitem{ruder2016overview}
{\sc S.~Ruder}, {\em An overview of gradient descent optimization algorithms},
  arXiv preprint arXiv:1609.04747,  (2016).

\bibitem{rudin1992nonlinear}
{\sc L.~I. Rudin, S.~Osher, and E.~Fatemi}, {\em Nonlinear total variation
  based noise removal algorithms}, Physica D: Nonlinear Phenomena, 60 (1992),
  pp.~259--268.

\bibitem{shen2016wavelet}
{\sc L.~Shen, Y.~Xu, and X.~Zeng}, {\em Wavelet inpainting with the $\ell$0
  sparse regularization}, Applied and Computational Harmonic Analysis, 41
  (2016), pp.~26--53.

\bibitem{song2009halpern}
{\sc Y.~Song and X.~Chai}, {\em Halpern iteration for firmly type nonexpansive
  mappings}, Nonlinear Analysis: Theory, Methods \& Applications, 71 (2009),
  pp.~4500--4506.

\bibitem{song2011successive}
{\sc Y.~Song and Q.~Li}, {\em Successive approximations for quasi-firmly type
  nonexpansive mappings}, Mathematical Communications, 16 (2011), pp.~251--264.

\bibitem{sra2012optimization}
{\sc S.~Sra, S.~Nowozin, and S.~J. Wright}, {\em Optimization for Machine
  Learning}, The MIT Press, Cambridge, 2012.

\bibitem{tseng2000modified}
{\sc P.~Tseng}, {\em A modified forward-backward splitting method for maximal
  monotone mappings}, SIAM Journal on Control and Optimization, 38 (2000),
  pp.~431--446.

\bibitem{zhang2004solving}
{\sc T.~Zhang}, {\em Solving large scale linear prediction problems using
  stochastic gradient descent algorithms}, in Proceedings of the Twenty-First
  International Conference on Machine Learning, 2004, p.~116.

\bibitem{zheng2019sparsity}
{\sc W.~Zheng, S.~Li, A.~Krol, C.~R. Schmidtlein, X.~Zeng, and Y.~Xu}, {\em
  Sparsity promoting regularization for effective noise suppression in {SPECT}
  image reconstruction}, Inverse Problems, 35 (2019), p.~115011.

\bibitem{zhu2015fast}
{\sc Y.~Zhu, J.~Wu, and G.~Yu}, {\em A fast proximal point algorithm for
  $\ell$1-minimization problem in compressed sensing}, Applied Mathematics and
  Computation, 270 (2015), pp.~777--784.

\bibitem{zorich2015Mathematical}
{\sc V.~A. Zorich}, {\em Mathematical Analysis {I}}, Springer, Berlin, 2nd~ed.,
  2015.

\end{thebibliography}
\end{document}